\DeclareMathAlphabet{\mathcalligra}{T1}{calligra}{m}{n}
\DeclareFontShape{T1}{calligra}{m}{n}{<->s*[1.5]callig15}{}
\newtheorem{theorem}{Theorem}[section]
\newtheorem{lemma}[theorem]{Lemma}
\newtheorem{lem-def}[theorem]{Lemma-definition}
\newtheorem{proposition}[theorem]{Proposition}
\newtheorem{prop-def}[theorem]{Proposition-definition}
\newtheorem{corollary}[theorem]{Corollary}
\theoremstyle{definition}
\newtheorem{definition}[theorem]{Definition}
\newtheorem{example}[theorem]{Example}
\newtheorem{remark}[theorem]{Remark}
\newtheorem*{questionstar}{Question}
\newtheorem*{theoremstar}{Theorem}
\numberwithin{equation}{section}
\newtheorem{thm}{Theorem}[section] 
\theoremstyle{plain} 
\newcommand{\thistheoremname}{}
\newtheorem{genericthm}[thm]{\thistheoremname}
\newtheorem*{genericthm*}{\thistheoremname}
\newenvironment{namedthm*}[1]
  {\renewcommand{\thistheoremname}{#1}%
   \begin{genericthm*}}
  {\end{genericthm*}}
\renewcommand{\AA} {\mathbb{A}}
\newcommand{\CC} {\mathbb{C}}
\newcommand{\LL} {\mathbb{L}}
\newcommand{\NN} {\mathbb{N}}
\newcommand{\PP} {\mathbb{P}}
\newcommand{\RR} {\mathbb{R}}
\newcommand{\ZZ} {\mathbb{Z}}
\newcommand {\shA} {\mathcal{A}}
\newcommand {\shB} {\mathcal{B}}
\newcommand {\shC} {\mathcal{C}}
\newcommand {\shD} {\mathcal{D}}
\newcommand {\shF} {\mathcal{F}}
\newcommand {\shH} {\mathcal{H}}
\newcommand {\shR} {\mathcal{R}}
\newcommand {\shT} {\mathcal{T}}
\newcommand {\shP} {\mathcal{P}}
\newcommand {\sE} {\mathscr{E}}
\newcommand {\sL} {\mathscr{L}}
\newcommand {\sM} {\mathscr{M}}
\newcommand {\sO} {\mathscr{O}}
\newcommand {\foa}  {\mathfrak{a}}
\newcommand {\coh} {\operatorname{coh}}
\newcommand {\cone} {\operatorname{cone}}
\newcommand{\sExt}{\mathscr{E} \kern -3pt xt}
\newcommand {\Gr} {\operatorname{Gr}}
\newcommand {\Hom} {\operatorname{Hom}}
\newcommand {\sHom}{\mathscr{H}\kern-5pt\mathcalligra{om}}
\newcommand {\id} {\operatorname{id}}
\newcommand {\Id} {\operatorname{Id}}
\newcommand {\kk} {\Bbbk}
\newcommand {\Ker} {\operatorname{Ker}}
\newcommand {\Pf} {\operatorname{Pf}}
\newcommand {\rank} {\operatorname{rank}}
\newcommand {\Spec} {\operatorname{Spec}}
\newcommand {\Sym} {\operatorname{Sym}}
\newcommand {\Tor} {\operatorname{Tor}}
\newcommand{\hpd}{{\natural}}
\newcommand{\Perf}{\mathrm{Perf}}
\newcommand{\perf}{\mathrm{perf}}
\newcommand{\Bl}{\mathrm{Bl}}
\newcommand{\act}{\operatorname{act}}
\newcommand{\rBl}{\mathrm{Bl}^{\rm ref}}
\newcommand{\rBlA}{\widetilde{\shA}^{\rm ref}}
\DeclareRobustCommand{\Sec}{\ifmmode\mathsection\else\textsection\fi}
\title[]{Blowing up linear categories, refinements, and homological projective duality with base locus}
\author[Q.Y.\ JIANG, N.C.\ Leung]{Qingyuan Jiang, Naichung Conan Leung}
\address{Institute for Advanced Study, Einstein Drive, Princeton, NJ 08540, USA}\email{jiangqy@ias.edu}
\address{The Institute of Mathematical Sciences and Department of Mathematics,
The Chinese University of Hong Kong, Shatin, N.T., Hong Kong}\email{leung@math.cuhk.edu.hk}
\begin{document}

\begin{abstract}
\noindent
In this paper, we first introduce geometric operations for linear categories, and as a consequence generalize Orlov's blow up formula \cite{Orlov04} to possibly singular local complete intersection centres. Second, we introduce {\em refined} blowing up of linear category along base--locus, and show that this operation is dual to taking linear intersections. Finally, as an application we produce examples of Calabi--Yau manifolds which admits Calabi--Yau categories fibrations over projective spaces.

\end{abstract}

\maketitle


\section{Introduction}

Homological projective duality (HPD) introduced by Kuznetsov \cite{Kuz07HPD}, has been a very fruitful theory to produce interesting semiorthogonal decompositions of derived categories of algebraic varieties, and also to relate derived categories of different varieties, see \cite{RT15HPD} and \cite{Kuz14SOD} for nice surveys, or \cite{JLX17} for a review and references therein. An important question is how to produce examples of HPD, and one useful strategy would be to produce new HPDs from existing ones. In this paper we will focus on the question:

\begin{questionstar} What is the HPD of the linear sections of a given HPD pair?
\end{questionstar}

More precisely, assume $V$ and $V^\vee$ are dual vector spaces (or more generally, dual vector bundles), and suppose $X \to \PP(V)$ and $Y \to \PP(V^\vee)$ are HPD Lefschetz varieties (or more generally, Lefschetz categories) of length $m$ and $n$, and $L \subset V^\vee$ is a generic linear subspace of dimension $\ell$. 
Then the goal is to find the HPD of the linear section $Y_L  = Y \times_{\PP(V^\vee)} \PP(L)$.

The question is answered by Carocci-Turcinovic \cite{CT15} in the case when the base locus $X_{L^\perp} \subset X$ of the linear system $L$ is smooth and is of large codimension. More precisely, if the codimension $\ell$ of $X_{L^\perp} \subset X$ satisfies $\ell > m$, then they showed the HPD of $Y_L$ is given by blowing up $\Bl_{X_{L^\perp}} X$ of $X$ along the base locus $X_{L^\perp}$. The problem for this result to hold in general is that, if the codimension is not large enough (i.e. $\ell \le m$), then the category of $\Bl_{X_{L^\perp}} X$ would be too large in general to be the proper HPD of $Y_L$.

In this paper we resolve this problem by introducing the notion of {\em refined categorical blowing up} $\Bl^{\rm ref}_{X_{L^\perp}} X$, and show that the HPD of $Y_L$ is always given by $\Bl^{\rm ref}_{X_{L^\perp}} X$.  This completes the answer to the above question,  generalizes the result of \cite{CT15} to any codimension $\ell$ and to $\PP(V)$-linear categories, and drops the smoothness assumption on the base locus $X_{L^\perp}$.

\subsection{Geometric operations on linear categories} First we consider the general question of how to perform geometric operations on linear categories. We start with the question: if $\shA \subset D(X) := D^b_{\coh} (X)$ is an admissible subcategory of a regular scheme $X$ over a field of characteristic zero, $Y \subset X$ is a  regular subscheme of codimension $r \ge 2$, then what is the blowing up of the category $\shA$ along $Y$?

The expected answer should be compatible with the commutative counterpart: namely if we assume $\shA$ is given by a scheme $X'$ with $X' \to X$, then the blowing up category $\widetilde{\shA}$ should be given by the category of the blowing up scheme $\widetilde{X'}$ of $X'$ along $Y': = X ' \times_X Y$. From blowing up closure formula for schemes, the blowing up $\widetilde{X'}$ can be obtained as the proper transform of $X'$ along the blowing up $\Bl_{Y} X \to X$, namely the scheme-theoretic closure of inverse image of $X' \backslash Y'$ along  $X'\times_X \Bl_Y X \to X'$. However, the operation of strict transform of schemes seems to be very difficult in general to be lifted to categorical level.

Fortunately, the question can be answered in the situation when $\shA$ is a $S$-linear subcategory, using the knowledge of linear categories developed by Kuznetsov \cite{Kuz11Bas}. 
This situation includes the case of blowing up of projective varieties along base locus of a linear system $L$ (where $S = \PP(V)$), which would be the main case for our later applications.

More precisely, suppose $S$ is a regular scheme, $Z \subset S$ is a regular closed subscheme of codimension $r \ge 2$, and $X$ is a $S$-scheme, $Y = Z \times_S X \subset X$ is of expected codimension $r$. Then $D(X)$ admits an action of the category $D(S)$ under (derived) pulling back along $X \to S$, and the subcategory $\shA \subset D(X)$ is called $S$-linear if it is closed under the action of $D(S)$ (see \S \ref{sec:bc}). Then the {\em blowing up $\widetilde{\shA}$ of the $S$-linear category $\shA \subset D(X)$} along $Y \subset X$ (or more accurately, along $\shA_Y$) can be defined (Def. \ref{def:blcat}) using the techniques of base-change of the $S$-linear $\shA$ developed by Kuznetsov \cite{Kuz11Bas} along a morphism $\widetilde{S} = \Bl_Z S \to S$.  We show that Orlov's results on blowing up hold also for the blowing up of category $\widetilde{\shA}$ of $\shA$, see Thm. \ref{thm:bl} for the precise statement, and \S \ref{sec:bl} for more detail.  

Notice that even in the commutative case,  our approach gives something new, namely a blowing-up formula for possibly singular centers: suppose $Y$ is a codimension $r \ge 2$ local complete intersection (l.c.i.) subscheme of a smooth scheme $X$ over a filed of characteristic zero, then Orlov's blowing up formula holds for the blowing up of $X$ along $Y$ without smoothness assumption on $Y$, see Cor. \ref{cor:bl:lci}. Notice that the smoothness of $X$ is not even necessary, as long as it can be realized as fiber products of smooth ones from Tor-independent squares, see Rmk. \ref{rmk:thm:bl:nreg} for more precise statement.

The above approach can also be carried to other geometric operations: projective bundle, (generalized) universal hyperplane, etc and we show the corresponding formulae on categories, see \S \ref{sec:app:gem:cat} for more details.

\subsection{Refined blowing up and HPD with base locus}
Let $X \to \PP(V)$ be a smooth projective variety. The input data for HPD theory is a {\em Lefschetz category}, namely an $\PP(V)$-linear admissible subcategory $\shA \subset D(X)$ with a {\em Lefschetz structure}, i.e. $\shA$ admits a semiorthogonal decomposition of the form
	$$\shA = \langle \shA_0, \shA_1(1), \ldots, \shA_{m-1}(m-1)\rangle,$$
where $\shA_0 \supset \shA_1 \supset \ldots \supset \shA_{m-1} \ne 0$ are admissible subcategories of $\shA$, and $\shA_*(k)$ denotes the image of the category $\shA_*$ under the autoequivalence $-\otimes \sO_{\PP(V)}(k) \colon \shA \to \shA$, $k \in \ZZ$. The number $m \in \NN$ is called the {\em length} of the Lefschetz category $\shA$, sometimes denoted by ${\rm length} (\shA)$. In the original HPD theory \cite{Kuz07HPD} it is required that $\shA$ is {\em moderate}, i.e. ${\rm length} (\shA) < \rank V$, and all applications of HPD hold under this condition. We will also stick to this convention and requires all Lefschetz categories to be moderate. Moreover, we show that for any {non-moderate} Lefschetz category $\shA$ (i.e. ${\rm length} (\shA) \ge \rank V$), it can always be {refined} to be an honest (=moderate) Lefschetz category, see Lem \ref{lem:lef:nonmoderate}.

The {\em HPD category $\shA^\hpd$} of the $\PP(V)$-linear Lefschetz category $\shA$, is itself a $\PP(V^\vee)$-linear Lefschetz category, and by definition captures the essential categorical information of the category $\shA$ as a family over $\PP(V)$. See Def. \ref{def:HPDcat} for the precise definition. In commutative case $\shA = D(X)$ and $\shA^\hpd= D(Y)$ for a variety $Y \to \PP(V^\vee)$, then $Y$ is the homological modification of the classical projective dual variety $X^\vee \subset \PP(V^\vee)$ of $X$ and we can recover $X^\vee$ as its critical values. The HPD is a duality relation: namely $\shA$ is also the HPD of $\shA^\hpd$. See \cite{Kuz07HPD, RT15HPD, JLX17} for more about HPD.

Back to the situation of the presence of a linear system $L \subset V^\vee$ of dimension $\ell$. Then the restriction $\shA^\hpd_{\PP(L)}$ of $\shA^\hpd$ along the inclusion $\PP(L) \subset \PP(V^\vee)$ is a $\PP(L)$-linear category. However $X \to \PP(V) \dashrightarrow \PP(L^\vee)$ is only a rational morphism, and $X_{L^\perp}= X \times_{\PP(V)}  \PP(L^\perp)$ is the base locus of $X$, which we suppose to also have codimension $\ell$, where $L^\perp = \Ker\{V \to L^\vee\} \subset V$ is the orthogonal linear subspace of $L$. Then the construction of previous section allows us to blow up $\shA \subset D(X)$ along base locus, to obtain a $\PP(L^\vee)$-linear category $\Bl_{\shA_{\PP(L^\perp)}} \shA$. The pair
	$$\text{$\PP(L^\vee)$-linear category $\Bl_{\shA_{\PP(L^\perp)}} \shA$} \quad \text{and} \quad \text{$\PP(L)$-linear category $\shA^\hpd_{\PP(L)}$} $$
then has a chance to be HPD over the dual projective spaces $\PP(L^\vee)$ and $\PP(L)$. The problem is that in general the category $\Bl_{\shA_{\PP(L^\perp)}} \shA$ is too large to be a Lefschetz category. 

The problem can be solved as follows. We observe that (see Lem. \ref{lem:app:ref-bl}) the $\PP(L^\vee)$-linear category $\Bl_{\shA_{\PP(L^\perp)}} \shA$ contains ``redundant'' components 
	$$
\big \langle \shA_{\ell-1} \boxtimes D(\PP(L^\vee)), \ldots, \shA_{m-1}(m-\ell) \boxtimes D(\PP(L^\vee)) \big \rangle  \subset \Bl_{\shA_{\PP(L^\perp)}} \shA
	 $$
which can be regarded as ``trivial family" over $\PP(L^\vee)$. Therefore their right orthogonal, which is defined to be the {\em refined blowing up $ \Bl_{\shA_{\PP(L^\perp)}}^{\rm ref} \shA$ of $\shA$ along $\shA_{\PP(L^\perp)}$}, contains the essential information of $\Bl_{\shA_{\PP(L^\perp)}} \shA$ as a $\PP(L^\vee)$-linear category. Our main result is the following: 

\begin{theoremstar}[Thm. \ref{thm:app:HPDbs}] The pair of (moderate) Lefschetz categories:
$$\text{$\PP(L^\vee)$-linear category $\Bl_{\shA_{\PP(L^\perp)}}^{\rm ref} \shA$} \quad \text{and} \quad \text{$\PP(L)$-linear category $\shA^\hpd_{\PP(L)}$} $$
are homological projective dual (HPD) to each other.
\end{theoremstar}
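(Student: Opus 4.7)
The plan is to exploit the involutivity of HPD: it suffices to show that applying the HPD construction of Definition~\ref{def:HPDcat} to the $\PP(L)$-linear Lefschetz category $\shA^\hpd_{\PP(L)}$ returns the refined blowing up $\Bl^{\rm ref}_{\shA_{\PP(L^\perp)}} \shA$ as a $\PP(L^\vee)$-linear moderate Lefschetz category. I would organize the argument in three stages: produce the natural candidate as a blow-up category, strip out the redundant pieces, and then identify the remainder with what Kuznetsov's HPD base-change machinery predicts.

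In the first stage I would apply the categorical blowing-up formula Thm.~\ref{thm:bl} along the smooth blow-up $\Bl_{\PP(L^\perp)} \PP(V) \to \PP(V)$, whose exceptional divisor is a $\PP^{\ell-1}$-bundle over $\PP(L^\vee)$. Pulling back the $\PP(V)$-linear category $\shA$ through this blow-up produces $\Bl_{\shA_{\PP(L^\perp)}} \shA$ as a $\PP(L^\vee)$-linear category with an explicit semiorthogonal decomposition into a copy of $\shA$ and a stack of twisted copies of $\shA_{\PP(L^\perp)}$, inheriting a generally non-moderate Lefschetz structure. In the second stage I would invoke Lem.~\ref{lem:app:ref-bl} to single out the trivial family
\[
\bigl \langle \shA_{\ell-1} \boxtimes D(\PP(L^\vee)), \ldots, \shA_{m-1}(m-\ell) \boxtimes D(\PP(L^\vee)) \bigr \rangle \subset \Bl_{\shA_{\PP(L^\perp)}} \shA,
\]
and pass to its right orthogonal, which is by definition $\Bl^{\rm ref}_{\shA_{\PP(L^\perp)}} \shA$. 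A direct Lefschetz bookkeeping, combined with Lem.~\ref{lem:lef:nonmoderate}, should show that this refinement is a genuinely moderate Lefschetz category over $\PP(L^\vee)$, hence an admissible candidate for an HPD partner.

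The third stage, where the main obstacle lies, is to match this refinement with $(\shA^\hpd_{\PP(L)})^\hpd$. My plan is to form the universal hyperplane linear category of $\Bl^{\rm ref}_{\shA_{\PP(L^\perp)}} \shA$ over $\PP(L)$ and compare it to the restriction along $\PP(L) \subset \PP(V^\vee)$ of the universal hyperplane category of $\shA$, from which Kuznetsov's fundamental theorem extracts $\shA^\hpd$. Using Thm.~\ref{thm:bl} in reverse and mutating across the redundant components identified in Lem.~\ref{lem:app:ref-bl}, the two semiorthogonal decompositions should agree piece-by-piece, yielding the desired HPD relation. The delicate point is controlling how mutations interact with the Lefschetz filtrations under simultaneous blow-up and linear base change, and verifying that once the trivial components are stripped the primitive Lefschetz piece on each side coincides; this is where the bulk of the technical work will sit.

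As a consistency check, in the large-codimension regime $\ell > m$ the list of redundant subcategories is empty, so the refined and ordinary blow-ups coincide, and the theorem specializes to the result of Carocci--Turcinovic \cite{CT15}. Conversely, the small-codimension range $\ell \le m$ is exactly where the refinement is essential, because the number of redundant components $m-\ell+1$ is precisely what one must remove to bring the length of the blow-up category down below $\rank L^\vee = \ell$ and restore moderation, matching the expected length of an HPD partner to $\shA^\hpd_{\PP(L)}$.
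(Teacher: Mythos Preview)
Your outline is essentially the paper's own strategy: one computes the HPD of $\rBlA$ by comparing its universal hyperplane over $\PP(L)$ with the restriction $\shH_{\shA,L}$ of $\shH_\shA$ along $\PP(L)\subset\PP(V^\vee)$, and matches the Lefschetz pieces. Two points deserve sharpening. First, your opening sentence proposes to compute $(\shA^\hpd_{\PP(L)})^\hpd$, but your third stage (and the paper) actually computes $(\rBlA)^\hpd$; these agree by duality, but you should commit to one direction. Second, and more substantively, the mechanism behind your ``Thm.~\ref{thm:bl} in reverse'' step is a geometric identification you have not stated: the universal hyperplane $\shH_{\widetilde{\shA}}$ of the \emph{full} blow-up $\widetilde{\shA}$ is itself the categorical blow-up of the generalized universal hyperplane $\shH_{\shA,L}$ along $\shA_{\PP(L^\perp)}\boxtimes D(\PP(L))$. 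Once this is in hand, the paper re-runs the entire refinement argument (Lem.~\ref{lem:app:ref-bl} and Prop.~\ref{prop:app:lef:ref-bl}) at this second level---this is Lem.~\ref{lem:app:lef:proof}---to produce a Lefschetz decomposition of $\shH_{\rBlA}$ whose primitive piece is manifestly $\gamma^*\shA^\hpd_{\PP(L)}$ up to a line-bundle twist; that, rather than an appeal to Lem.~\ref{lem:lef:nonmoderate} or a generic ``piece-by-piece'' matching, is what closes the argument.
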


If $\shA = D(X)$ and $\ell > m$, then $\Bl_{\shA_{\PP(L^\perp)}}^{\rm ref} \shA = \Bl_{\shA_{\PP(L^\perp)}} \shA = D(\Bl_{X_{L^\perp}}X)$,
the theorem reduces to the result of \cite{CT15} without smoothness assumption on $X_{L^\perp}$. In general if $\ell \le m$, then $\Bl_{\shA_{\PP(L^\perp)}}^{\rm ref} \shA$ will be a strictly smaller subcategory of the usual blowing up.

As the case of all considerations of HPD theory, it is expected that the expected dimension condition of $X_{L^\perp} \subset X$ can be dropped if one consider {\em derived} fiber product $X \times_{\PP(V)} \PP(L^\perp)$ instead of scheme-theoretic fiber product. Our constructions and results in this paper should shed lights on the question of what the {\em derived blowing up} should be, at least in the case of blowing up along base-locus of a linear system.

\subsection{Application to fibrations of Calabi--Yau categories}
As an application of our construction and theorem, consider for integers $d, k \ge 2$, the Calabi--Yau $(kd-k-1)$-fold $X_{(d,d, \ldots, d)} \subset \PP^{kd-1}$ which is a complete intersection of $k$ general degree $k$ hypersurfaces inside $\PP^{kd-1}$ (i.e. of type $(d,d, \ldots, d)$). The usual blowing up of $\PP^{kd-1}$ along $X_{(d,d, \ldots, d)}$ consist of redundant information of $\PP^{kd-1}$; However the {\em refined} blowing up with respect to the action $\otimes \sO_{\PP^{kd-1}}(d)$ is essentially  ``$\PP^{k-2}$-copies" of $X_{(d,d, \ldots, d)}$. Then our theorem implies that 
	$$D(X_{(d,d,\ldots, d)}) \simeq \shA^{\hpd}_{\PP^{k-1}}$$
where $\shA^{\hpd}_{\PP^{k-1}}$ is a family of Calabi--Yau categories of dimension $k(d-2)$ over $\PP^{k-1}$, with fiber $\shA^{\hpd}_{s}$ over a general point $s \in \PP^{k-1}$ given by the Kuznetsov component  \cite{KuzCY} of the derived category of a degree $d$ hypersurface $X_s\subset \PP^{kd-1}$:
	$$D(X_s) = \big \langle \shA^{\hpd}_{s}, ~ \sO, \sO(1), \ldots, \sO(kd-d-1)\big \rangle.$$

For example, if we take $(k,d) = (3,2)$, we obtain the Calabrese--Thomas derived equivalence between a Calabi--Yau threefold $X_{(3,3)} \subset \PP^4$ and a pencil of K3 categories from cubic fourfolds \cite{CT16}.  Similarly, we also obtain that the Calabi--Yau $5$-fold $X_{(4,4)} \subset \PP^{7}$ admits a pencil of Calabi--Yau categories of dimension $4$; The Calabi--Yau $5$-fold $X_{(3,3,3)}\subset \PP^{8}$ admits a Calabi--Yau $3$ category fibration over $\PP^2$, etc. See \S \ref{sec:CY} for more details.

\subsection{Application to homological projective geometry}
The work in this paper fits into the framework of {\em homological projective geometry} (see \cite{JLX17, KP18, JL18join} for more details) as follows. Denote  by $\underline{{\rm Lef}}_{/\PP(V)}$ the category of smooth proper $\PP(V)$-linear Lefschetz categories, and fix any linear system (i.e. linear subbundle) $L \subset V^\vee$. Then our main result shows that there are two functors of Lefschetz categories: for the linear inclusion $\PP(L) \subset \PP(V^\vee)$, there is a restriction functor:
		$$  (-)|_{\PP(L)}   \colon \underline{{\rm Lef}}_{/\PP(V^\vee)}  \to \underline{{\rm Lef}}_{/\PP(L)}, \qquad \shA \mapsto \shA|_{\PP(L)};$$
	and for the linear projection $\PP(V) \dashrightarrow \PP(L^\vee)$, there is a refined blowing up functor:
		$$ \Phi(-): = \Bl_{\PP(L^\perp)}^{\rm ref} (-)  \colon \underline{{\rm Lef}}_{/\PP(V)}  \to \underline{{\rm Lef}}_{/\PP(L^\vee)}, \qquad \shA \mapsto \Phi(\shA) :=\Bl_{\shA_{\PP(L^\perp)}}^{\rm ref} \shA,$$
	(if we assume the smoothness of the intersections of $\shA_{\PP(L)}$ or respectively $\shA_{\PP(L^\perp)}$), and that these two operations are {\em dual} to each other under HPD, i.e. $\Phi(\shA)^\hpd = (\shA^\hpd)|_{\PP(L)}$, $\shA \in  \underline{{\rm Lef}}_{/\PP(V^)}$.	

The construction in this paper also allows us to define categorical joins in \cite{JL18join} for Lefschetz varieties with nontrivial intersections, and show the duality between categorical joins and fiber products. 

Notice there is another construction called categorical cones $\shC_{\PP(L^\vee)} (-)\colon \underline{{\rm Lef}}_{/\PP(L^\vee)}  \to \underline{{\rm Lef}}_{/\PP(V)}$, which is a special case of categorical joins if there is a splitting $V = L^\vee \oplus L^\perp$, see \cite{KP18, JL18join}. Then the combination of results of this paper and of \cite{JL18join} shows that the composition:
		$$\Phi  \circ \shC_{\PP(L^\vee)} (-) \colon   \underline{{\rm Lef}}_{/\PP(L^\vee)}  \to \underline{{\rm Lef}}_{/\PP(V)} \to \underline{{\rm Lef}}_{/\PP(L^\vee)} $$
is equivalent to identity, i.e. $\Phi (\shC_{\PP(L^\vee)} (\shA)) \simeq \shA$, for $\shA \in \underline{{\rm Lef}}_{/\PP(L^\vee)}$.

All these results provide further evidences for the theory of homological projective geometry and in turn shows the naturality of the construction of this paper.

\subsection{Convention.} Let $B$ be a fixed base scheme, smooth over a ground field of characteristic zero, and $V$, $V^\vee$ be dual vector bundles of rank $N$ over $B$. All schemes considered in this paper will be $B$-schemes, and products are fiber products over $B$. A $B$-linear category will be an {\em admissible} subcategory $\shA \subset D(X)$ for some $B$-scheme $X$. 

The constructions, arguments, and results of this paper -- at least the part on perfect complexes -- should have no difficulty to be carried out in $dg$-setting or $\infty$-setting of Lurie \cite{Lur} over any base scheme $B$. For example the noncommutative HPD theory has been set up in Lurie's framework by Alex Perry \cite{P18}. However in this paper we restrict ourselves to the above convention for following reasons: our results are mainly about the bounded coherent derived categories rather than perfect complexes; our approach depends on the geometry of blowing up and generalized hyperplane section, and the categorical formulae for them are so far only established for characteristic zero by Orlov, and trying to prove them in arbitrary characteristic will go beyond the aim of this paper; the overall well-established frameworks of Bondal, Orlov, Kuznetsov are set up for this convention at this stage. 

We use notation $X,Y,Z,S,T, \ldots$ to denote schemes, $\sL, \sE, \sM$ to denote coherent sheaves or vector bundles on certain schemes, and $\shA, \shB, \shC, \ldots$ to denote triangulated categories and $A,B,C \ldots$ to typically denote the elements of the corresponding categories. Functors considered in this paper are all {\em derived} unless specified otherwise.

\subsection*{Acknowledgement} J.Q. would like to thank Richard Thomas, Zak Turcinovic and Carocci Francesca for many helpful discussions on HPD, Cayley's trick, and the work \cite{CT15}. The authors are grateful for Mikhail Kapranov,  Andrei C\v{a}ld\v{a}raru, Matthew Young and Ying Xie for many helpful discussions. J.Q would also like to thank Yu-Wei Fan for bringing his attention to fibrations of Calabi--Yau categories. J.Q. is supported by Grant from National Science Foundation (Grant No. DMS -- 1638352) and the Shiing-Shen Chern Membership Fund of IAS; L.N.C. is supported by grant from the Research Grants Council of the Hong Kong Special Administrative Region, China (Project No. CUHK -- 14301117).

\section{Preliminaries}

\subsection{Generalities} A {\em semiorthogonal decomposition} of a triangulated category $\shT$,
	$$\shT = \langle \shA_1, \ldots, \shA_n \rangle,$$
is a sequence of admissible full triangulated subcategories $\shA_1, \shA_2, \ldots, \shA_{n}$, such that $(i)$ $\Hom (a_j ,a_i) = 0$ for all $a_i \in \shA_i$ and $a_j \in \shA_j$ , if $j > i$, and $(ii)$ they generate the whole $D(X)$. 
A full triangulated subcategory $\shA$ of (a triangulated category) $\shT$ is called {\em admissible} if the inclusion functor $i = i_{\shA}: \shA \hookrightarrow \shT$ has both a right adjoint functor $i^!: \shT \to \shA$ and a left adjoint functor $i^*: \shT \to \shA$. If $\shA\subset \shT$ is admissible, then its {\em right orthogonal} $\shA^\perp = \{ T \in \shT \mid \Hom(\shA,T) = 0\}$ and {\em left orthgonal} ${}^\perp \shA =\{ T \in \shT \mid \Hom(T, \shA) = 0\}$ are both admissible, and $\shT = \langle \shA^\perp, \shA \rangle =  \langle \shA, {}^\perp \shA\rangle$. The functors $\LL_\shA: = i_{\shA^\perp} i^*_{\shA^{\perp}}$  (resp. $\RR_{\shA} : =  i_{{}^\perp \shA} i^!_{ {}^\perp \shA}$) is called the {\em left (resp. right) mutation passing through $\shA$}. The mutation functors allow us to start with a semiorthogonal decomposition to obtain a whole sequence of new semiorthogonal decompositions. The readers are referred to \cite{Huy} and \cite{BO} for definitions and properties of derived categories of algebraic varieties and semiorthogonal decompositions, and to \cite{B, BK} or reviews in \cite{Kuz07HPD, JLX17} for more about mutation functors. 

\subsection{Derive categories over a base.} \label{sec:bc} Let $S$ be a fixed scheme. A $S$-linear category is a (dg-enriched or stable $\infty$-enriched) triangulated category equipped with a module structure over the commutative algebra object $\Perf(S)$. If $X$ is a $S$-scheme with structure map $a_X\colon X \to S$, then $\Perf(X)$, $D(X)$ and $D_{qc}(X)$ are naturally equipped with $S$-linear structure by $A \otimes a_X^* F$, for any $F \in \Perf(S)$ and $A \in \Perf(X)$, $D(X)$ or $D_{qc}(X)$. We will mainly focus on admissible subcategories $\shA \subset D(X)$ of the bounded coherent derived category $D(X)$. For simplicity we will only consider quasi-compact schemes proper over $S$.

\begin{definition}[{\cite{Kuz07HPD,Kuz11Bas}}] 
\begin{enumerate}[leftmargin=*]
\item An admissible subcategory $\shA \subset D(X)$ for a $S$-scheme $a_X \colon X\to S$ is called {\em $S$-linear} if $A \otimes a_X^* F \in \shA$ for any $A \in \shA$ and $F \in \Perf(S)$. 
\item For $S$-linear categories $\shA \subset D(X)$ and $\shB \subset D(Y)$, where $X,Y$ are $S$-schemes with structural morphisms $a_X$ and $a_Y$, an exact functor $\Phi\colon \shA \to \shB$ is called {\em $S$-linear} if there are functorial isomorphisms $\Phi(A \otimes a_X^* F) \simeq \Phi(A) \otimes a_Y^* F$ for any $A \in \shA$ and $F \in \Perf(S)$.
\end{enumerate}
\end{definition}

An $S$-linear category $\shA$ is by definition equipped with an action functor
	$$\act \colon \shA \boxtimes D(\PP(L^\vee)) \to \shA,$$
given by $\act(A \boxtimes F) = A \otimes a_X^* F \in \shA$, where $\shA \boxtimes D(\PP(L^\vee))$ is to be defined later.

\begin{definition}[{\cite{Kuz07HPD,Kuz11Bas}}] \label{def:bc} A base change $\phi: T \to S$ is called \emph{faithful} for the morphism $a: X \to S$ (or {\em faithful} for the $S$-scheme $X/S$) if for Cartesian square
	\begin{equation}\label{eqn:fiber}
	\begin{tikzcd}
	X_T = X \times_S T  \arrow{d}{}[swap]{a_T}  \arrow{r}{\phi_T} & X \arrow{d}{a} \\
	T\arrow{r}{\phi}  & S
	\end{tikzcd}
	\end{equation}
the natural transformation $ a^* \circ \phi_* \to \phi_{T *} \circ a_T^*: D(T) \to D(X)$ is an isomorphism. 
\end{definition}
It is well-known that a base-change $\phi$ as above is faithful for $X/S$ if and only if the square (\ref{eqn:fiber}) is {\em Tor-independent}, i.e. for all $t \in T$, $x \in X$, and $s \in S$ with $\phi(t) = s = a(x)$, $\Tor_i^{\sO_{S,\,s}}(\sO_{T,\,t}, \sO_{X,\,x}) = 0$ for all $i > 0$.

\begin{definition}[{\cite{Kuz11Bas}}] \label{def:bc-A} Let $\shA$ be a $S$-linear admissible subcategory of $D(X)$. We assume $\phi: T \to S$ is a {faithful} base-change for $X/S$. Then the base-change category of perfect complexes category $\shA^\perf$ along $T \to S$ is defined to be:
	$$\shA_T^\perf = \shA^\perf \otimes_{\Perf(S)} \Perf(T) \subset \Perf(X_T) = \Perf(X) \otimes_{\Perf(S)} \Perf(T),$$
by which we mean the $\shA_T^{\rm perf} \subset \Perf(X_T)$ is the subcategory thickly generated by (i.e. the minimal idempotent-complete triangulated subcategory containing) elements $\phi_T^* A \otimes f_T^* F$, for all $A \in \shA^{\rm perf} : = \shA \cap \Perf(X)$ and $F \in \Perf(T)$. Let $\hat{\shA_T} \subset D_{qc}(X_T)$ be the minimal subcategory containing $\shA_T^{\rm perf}$ and closed under arbitrary direct sums. The {\em base-change of $\shA$ along $\phi$} is the $T$-linear admissible subcategory defined by
	$$\shA_T := \hat{\shA_T}  \cap D(X_T) \subset D(X_T).$$
The category $\shA_T$ satisfies: $\phi_T^*(a) \in \shA_{T}$ for any $a \in \shA$ and $\phi_{T*}(b) \in \shA$ for $b \in \shA_{T}$ with proper support over $X$, see {\cite{Kuz11Bas}}. Then the $T$-linear category $\shA_T$ depends only on $\shA$ and $\phi$, and does not depend on the choice of an embedding $\shA \subset D(X)$. 
\end{definition}

\begin{remark} The tensor product in the above definition $\shA_T^\perf = \shA^\perf \otimes_{\Perf(S)} \Perf(T)$ agrees with the tensor products of small idempotent-complete stable $\infty$-categories in the sense of \cite{Lur} and \cite{BFN}. More precisely, for two small idempotent-complete stable $\infty$-categories $\shC_1$ and $\shC_2$, then tensor product is defined to be $\shC_1 \otimes \shC_2 = ({\rm Ind}(\shC_1) \otimes {\rm Ind}(\shC_2))^c$, where ${\rm Ind}$ is ${\rm Ind}$-completion, and $(-)^c$ is taking compact objects, see \cite{Lur}, \cite{BFN} or \cite{P18}. If $X$ and $X_T$ are smooth, then $\shA_T = \shA_T^{\rm perf}$. In general they are related by:
	$\shA_T^\perf = \shA_T \cap \Perf(X_T)$ by \cite{Kuz11Bas}, and it is expected that the categorical Poincar\'e duality holds: $\shA_T = {\rm Fun}^{ex}_{\Perf(S)}(\shA_T^\perf, D(S))$ (for their stable $\infty$-enhancements), since this duality holds if $\shA = D(X)$ for perfect proper stacks $X \to S$ by  \cite{BNP}.
\end{remark}

\begin{lemma}[{\cite[Cor. 5.7]{Kuz11Bas}}] \label{lem:bc-char} In the situation of Def. \ref{def:bc-A} , we assume further the base-change $\phi$ is {\em projective} (therefore $\phi_{T *} (D(X_T)) \subset D(X)$, the pushforward preserves coherence). Then the $T$-linear admissible subcategory $\shA_T \subset D(X_T)$ is characterized by
	$$\shA_T  = \{ C \in D(X_T) \mid \phi_{T*} (C \otimes f^* F) \in \shA, \quad \forall F \in \Perf(T)\} \subset D(X_T).$$
\end{lemma}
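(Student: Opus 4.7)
Denote the right-hand side by $\shB$. The plan is to prove $\shA_T = \shB$ by the two inclusions: the forward direction $\shA_T \subseteq \shB$ is almost formal from what is already recorded in Def.~\ref{def:bc-A}, while the reverse direction reduces, via admissibility, to a conservativity statement for the family $\phi_{T*}(-\otimes f^*F)$. For $\shA_T \subseteq \shB$: if $C \in \shA_T$, then $T$-linearity of $\shA_T$ gives $C \otimes f^*F \in \shA_T$ for every $F \in \Perf(T)$; since $\phi$ is projective, so is $\phi_T$, so $C \otimes f^*F$ has proper support over $X$, and the property recorded in Def.~\ref{def:bc-A} that $\phi_{T*}(b) \in \shA$ whenever $b \in \shA_T$ has proper support over $X$ yields $\phi_{T*}(C \otimes f^*F) \in \shA$.

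For the reverse direction $\shB \subseteq \shA_T$, I would use admissibility to write $D(X_T) = \langle \shA_T^\perp, \shA_T \rangle$, so any $C \in \shB$ sits in a triangle $c \to C \to c'$ with $c \in \shA_T$ and $c' \in \shA_T^\perp$. Since $c \in \shB$ (by the first inclusion) and $\shB$ is plainly triangulated, $c'$ also lies in $\shB$, so it suffices to show $\shA_T^\perp \cap \shB = 0$. For such $C$, I would test against the generators $\phi_T^*A \otimes f^*F$ of $\shA_T^{\rm perf}$ (with $A \in \shA^{\rm perf}$, $F \in \Perf(T)$) and apply the adjunction/dualisation chain
\[
0 = \Hom_{X_T}(\phi_T^*A \otimes f^*F, C) = \Hom_X(A, \phi_{T*}(C \otimes f^*F^\vee)).
\]
Combined with $\phi_{T*}(C \otimes f^*F^\vee) \in \shA$ (which is the defining property of $\shB$) and the fact that $\shA^{\rm perf}$ classically generates $\shA$ in our smooth setting, this forces $\phi_{T*}(C \otimes f^*F) = 0$ for every $F \in \Perf(T)$.

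The last step, deducing $C = 0$ from these vanishings, is where I expect the main difficulty. My plan is to test against arbitrary $G \in D(X)$: by adjunction
\[
\Hom_{X_T}(\phi_T^*G \otimes f^*F^\vee, C) = \Hom_X(G, \phi_{T*}(C \otimes f^*F)) = 0,
\]
so $C$ is right-orthogonal to every object of the form $\phi_T^*G \otimes f^*F$. The hard part is justifying that such objects generate $D(X_T)$; I would deduce this from the Tor-independence hypothesis, which realises $D(X_T)$ as an appropriate base change of $D(X)$ from $D(S)$ to $D(T)$ (compare the remark after Def.~\ref{def:bc-A}), so that a generator of $D(X)$ pulled back to $X_T$ and twisted by $f^*\Perf(T)$ generates $D(X_T)$.
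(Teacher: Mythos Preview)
The paper does not give its own proof of this lemma; it simply quotes \cite[Cor.~5.7]{Kuz11Bas}. So there is no ``paper's proof'' to compare against, only Kuznetsov's original argument.

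Your outline is correct, and the forward inclusion is exactly as you say. For the reverse inclusion, however, you are taking a detour that creates the very difficulty you flag at the end. The cleaner route---and the one implicit in Kuznetsov's framework---is to base-change not just $\shA$ but the whole semiorthogonal decomposition $D(X) = \langle \shA, {}^\perp\shA \rangle$ (this is Prop.~\ref{prop:bcsod} in the paper, i.e.\ \cite[Thm.~5.6]{Kuz11Bas}). One obtains $D(X_T) = \langle \shA_T, ({}^\perp\shA)_T \rangle$, so that $\shA_T = (({}^\perp\shA)_T)^\perp$. Now for $C \in \shB$ and any generator $\phi_T^*B \otimes f^*F$ of $({}^\perp\shA)_T^{\perf}$ with $B \in ({}^\perp\shA)^{\perf}$, the same adjunction chain you wrote gives
\[
\Hom_{X_T}(\phi_T^*B \otimes f^*F, C) = \Hom_X\big(B, \phi_{T*}(C \otimes f^*F^\vee)\big) = 0,
\]
since $\phi_{T*}(C \otimes f^*F^\vee) \in \shA$ by hypothesis and $B \in {}^\perp\shA$. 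Hence $C \in \shA_T$ directly. This bypasses entirely the ``hard part'' you identified (generation of $D(X_T)$ by pullbacks and twists), as well as the intermediate step of proving $\phi_{T*}(C \otimes f^*F) = 0$, which in your approach relied on $\shA^{\perf}$ classically generating $\shA$---a point that is automatic when $X$ is smooth but would need separate justification otherwise.
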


\begin{remark} Suppose $\phi_T$ is a proper perfect morphism, then $\phi_{T *} $ preserves perfect complexes and we also have a characterization
	$$\shA_T^\perf  = \{ C \in \Perf(X_T) \mid \phi_{T*} (C \otimes f^* F) \in \shA^\perf, \quad \forall F \in \Perf(T)\} \subset \Perf(X_T).$$
\end{remark}

\begin{lemma}[Compatibility] Assume $\phi \colon T \to S$ and $\psi \colon T' \to T$ are faithful base-changes for $X/S$ and $X_T/T$ respectively (therefore the composition $\phi'  = \psi \circ \phi \colon T' \to T \to S$ is a faithful base-change for $X/S$), and $\shA \subset D(X)$ is an $S$-linear admissible subcategory. Then
	$$(\shA_T)_{T'} = \shA_{T'} \subset D(X_{T'}),$$
where the left hand side is the base-change of $\shA$ along $T\to S$ followed by the base-change along $T' \to T$, and the right hand side is the base-change of $\shA$ along composition $\phi' \colon T \to T' \to S$.
\end{lemma}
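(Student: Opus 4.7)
The plan is to reduce the equality to the level of subcategories of perfect complexes, where the base-change is built explicitly from thick closures of generators, and then to bootstrap back up using the closure-plus-intersection recipe defining $\shA_{T'}$ and $(\shA_T)_{T'}$. First I set up notation: write $\phi_T\colon X_T\to X$ and $a_{X_T}\colon X_T\to T$ for the projections from $X_T = X\times_S T$, and $\psi_{T'}\colon X_{T'}\to X_T$, $a_{X_{T'}}\colon X_{T'}\to T'$ for those from $X_{T'}=X_T\times_T T'$. The identification $X_{T'}=X\times_S T'$ gives the two basic compatibilities $\phi'_{T'}=\phi_T\circ \psi_{T'}$ and $a_{X_T}\circ\psi_{T'}=\psi\circ a_{X_{T'}}$, which are the only inputs I will need beyond Definition \ref{def:bc-A}.

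Next I would compare generators. By definition, $\shA_{T'}^{\rm perf}$ is the thick closure inside $\Perf(X_{T'})$ of objects of the form $(\phi'_{T'})^*A\otimes a_{X_{T'}}^*G$ with $A\in\shA^{\rm perf}$ and $G\in\Perf(T')$. The category $(\shA_T)_{T'}^{\rm perf}$ is thickly generated by $\psi_{T'}^*B\otimes a_{X_{T'}}^*G$ with $B\in\shA_T^{\rm perf}$, and plugging in a generator $B=\phi_T^*A\otimes a_{X_T}^*F$ ($A\in\shA^{\rm perf}$, $F\in\Perf(T)$), the two compatibilities above combined with the projection formula yield
\[
\psi_{T'}^*(\phi_T^*A\otimes a_{X_T}^*F)\otimes a_{X_{T'}}^*G \;\simeq\; (\phi'_{T'})^*A\otimes a_{X_{T'}}^*\bigl(\psi^*F\otimes G\bigr).
\]
As $F$ and $G$ range over $\Perf(T)$ and $\Perf(T')$, the tensor $\psi^*F\otimes G$ ranges over a generating set of $\Perf(T')$ (take $F=\sO_T$ for one inclusion; for the other, any generator is already of the form $\sO_{T'}\otimes G$). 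Hence the two generating sets produce the same thick subcategory, so $(\shA_T)_{T'}^{\rm perf}=\shA_{T'}^{\rm perf}$ inside $\Perf(X_{T'})$.

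To finish, both $(\shA_T)_{T'}$ and $\shA_{T'}$ are defined as $\widehat{(-)^{\rm perf}}\cap D(X_{T'})$, where $\widehat{\,\cdot\,}$ denotes closure under arbitrary direct sums inside $D_{qc}(X_{T'})$ (equivalently, ind-completion in $D_{qc}(X_{T'})$). Since the perfect-level subcategories coincide by the previous step, so do their sum-closures inside $D_{qc}(X_{T'})$ and hence their intersections with $D(X_{T'})$. I expect the only real subtlety to be verifying the hypotheses that make each base-change well-defined, namely that the composition $\phi'=\phi\circ\psi$ is faithful for $X/S$: this follows from a routine Tor-independence computation (the outer square is the vertical composition of two Tor-independent squares), but it must be invoked to ensure that Definition \ref{def:bc-A} applies to $\shA_{T'}$. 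As an alternative route, when the base-changes are projective, Lemma \ref{lem:bc-char} characterizes both categories by pushforward conditions, and the projection formula $\psi_{T'*}\bigl(C\otimes a_{X_{T'}}^*(G\otimes\psi^*F)\bigr)\simeq\psi_{T'*}(C\otimes a_{X_{T'}}^*G)\otimes a_{X_T}^*F$ combined with $\phi'_{T'*}=\phi_{T*}\circ\psi_{T'*}$ reduces both to the single condition $\phi'_{T'*}(C\otimes a_{X_{T'}}^*H)\in\shA$ for all $H\in\Perf(T')$.
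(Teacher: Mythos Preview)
Your proposal is correct and follows essentially the same approach as the paper: compare generators at the perfect level via $\psi_{T'}^*(\phi_T^*A\otimes a_{X_T}^*F)\otimes a_{X_{T'}}^*G \simeq (\phi'_{T'})^*A\otimes a_{X_{T'}}^*(\psi^*F\otimes G)$, observe that both sides thickly generate the same subcategory of $\Perf(X_{T'})$, then pass to the sum-closure in $D_{qc}$ and intersect with $D(X_{T'})$. Your write-up is in fact more careful than the paper's (which abuses notation by writing $\phi^*,\psi^*$ for the base-changed maps); the alternative pushforward argument via Lemma~\ref{lem:bc-char} and the explicit remark on Tor-independence of the composite square are extras not present in the original proof.
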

\begin{proof} By Def. \ref{def:bc-A}, notice the perf-version $(\shA_T)_{T'} ^\perf$ of left hand side is thickly generated by 		
	$$\psi^* ( \phi^* A \otimes F) \otimes F' = \phi'^* A \otimes (\psi^* F \otimes F') , \qquad A \in \shA, F \in \Perf(T), F' \in \Perf(T'),$$
and the perf-version $\shA_{T'}^\perf$ for right hand side is thickly generated by 
	$$ \phi'^* A  \otimes F' , \qquad A \in \shA, F' \in \Perf(T').$$
Since $\psi^* F \otimes F' \in \Perf(T')$, therefore $(\shA_T)_{T'} ^\perf$ and $  \shA_{T'} ^\perf$ are the same subcategory of $\Perf(X_{T'})$. Taking completion in $D_{qc}(X_{T'})$ under arbitrary direct sums, and then intersecting with $D(X_{T'})$, the lemma follows.
\end{proof}

\begin{proposition} [\cite{Kuz07HPD}, Prop. 2.39]\label{prop:bcFM} If $D^b(Y) = \langle \Phi_{K_1}(D^b(X_1)), \ldots, \Phi_{K_n}(D^b(X_n))\rangle$ is a semi-orthogonal decomposition, where $X_i$ and $Y$ are projective over $S$ and smooth, $i=1,\ldots, n$ and $K_i \in D^b(X_i \times_S Y)$ Fourier-Mukai kernels. Assume $\phi: T \to S$ is faithful for all the pairs $(X_1, Y),\ldots,(X_n,Y)$ (i.e. $\phi$ is faithful for $X_i/S$, $Y/S$ and $X_i \times_S Y/S$, $i=1,\ldots,n $). Then the functor $\Phi_{K_{iT}} \colon D(X_T) \to D(Y_S)$ is fully faithful for $i=1,\ldots,n$, and there is a $T$-linear semi-orthogonal decomposition 
	$$D^b(Y_T) = \langle \Phi_{K_{1T}}(D^b(X_{1T})), \ldots, \Phi_{K_{nT}}(D^b(X_{nT}) .\rangle$$
\end{proposition}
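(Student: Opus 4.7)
The plan is to propagate each piece of the structure upstairs (faithfulness of each $\Phi_{K_i}$, semi-orthogonality, and generation) to the base-changed side by exploiting the Tor-independence encoded in the faithfulness hypothesis on $\phi$. First I would construct the base-changed kernels: since $\phi$ is faithful for $X_i \times_S Y/S$, the derived pullback $K_{iT} := \phi_{X_i \times_S Y}^* K_i$ lies in $D^b(X_{iT}\times_T Y_T)$ (coherence is preserved thanks to Tor-independence, which makes the pullback cohomologically bounded). Next I would record the base-change compatibilities: for faithful base-changes one has the natural isomorphisms $\phi_Y^* \circ \Phi_{K_i} \simeq \Phi_{K_{iT}} \circ \phi_{X_i}^*$ and $\Phi_{K_i} \circ \phi_{X_i *} \simeq \phi_{Y *} \circ \Phi_{K_{iT}}$ on appropriately supported objects; these follow from flat base change and the projection formula applied to the diagram obtained from \eqref{eqn:fiber}. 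The smoothness and properness of $X_i, Y$ over $S$ ensure $\Phi_{K_i}$ admits both adjoints, realized as Fourier–Mukai transforms whose kernels are obtained from $K_i$ by Serre-duality twists; the same recipe downstairs produces adjoints of $\Phi_{K_{iT}}$ compatible with base-change.

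With these compatibilities in hand, I would handle full faithfulness and semi-orthogonality in one stroke by examining compositions of kernels. The convolution $K_i^\vee \star K_j$ (giving the kernel of $\Phi_{K_i}^R \circ \Phi_{K_j}$) involves pullback--pushforward along the triple fibre product $X_i\times_S Y\times_S X_j$; faithfulness of $\phi$ for all three schemes $X_i/S$, $Y/S$, $X_j/S$ (as well as the pair-products) supplies Tor-independence for every square in the associated diagram, so that the convolution commutes with base-change:
\[
(K_i^\vee \star K_j)_T \simeq K_{iT}^\vee \star K_{jT}.
\]
Applying this to $i=j$: upstairs the kernel is $\Delta_{X_i *}\sO_{X_i}$ (by full faithfulness of $\Phi_{K_i}$), hence its pullback is $\Delta_{X_{iT}*}\sO_{X_{iT}}$, which gives full faithfulness of $\Phi_{K_{iT}}$. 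Applying it to $i<j$ in the other direction: upstairs the kernel is zero by semi-orthogonality, so its base-change vanishes and we obtain the required semi-orthogonality of $\Phi_{K_{iT}}(D(X_{iT}))$ in $D(Y_T)$.

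The step I expect to be the real obstacle is \emph{generation}, because we must rule out objects in $D(Y_T)$ that have no counterpart upstairs. Set $\shA_{iT}:=\Phi_{K_{iT}}(D(X_{iT}))\subset D(Y_T)$; by the previous paragraph these are $T$-linear admissible and mutually semi-orthogonal, so they generate a $T$-linear admissible subcategory $\shB\subset D(Y_T)$, and it suffices to show $\shB^\perp=0$. Let $C\in\shB^\perp$; then for all $i$, all $A\in D(X_i)$ and all $F\in \Perf(T)$,
\[
0=\Hom_{Y_T}\bigl(\Phi_{K_{iT}}(\phi_{X_i}^* A)\otimes a_{Y_T}^*F,\,C\bigr)
\simeq \Hom_{Y}\bigl(\Phi_{K_i}(A),\,\phi_{Y*}(C\otimes a_{Y_T}^*F)\bigr),
\]
using $T$-linearity, the projection formula, and the base-change compatibility of $\Phi$. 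Since the $\Phi_{K_i}(D(X_i))$ collectively generate $D(Y)$, this forces $\phi_{Y*}(C\otimes a_{Y_T}^*F)=0$ for every $F\in\Perf(T)$; by Lem.~\ref{lem:bc-char} (applied with $\shA=0$) this in turn forces $C=0$. Hence $\shB = D(Y_T)$, completing the desired $T$-linear semi-orthogonal decomposition.
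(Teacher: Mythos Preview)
The paper does not supply its own proof here; the proposition is quoted from \cite{Kuz07HPD} without argument. Your sketch is essentially the proof given in that reference: pull back the kernels, use the Tor-independence hypotheses so that Fourier--Mukai transforms, their adjoints, and convolutions of kernels all commute with base-change, read off full faithfulness ($i=j$) and semi-orthogonality ($i\ne j$) from the resulting kernel identities, and finish generation via conservativity of the family $C\mapsto\phi_{Y*}(C\otimes a_{Y_T}^*F)$, $F\in\Perf(T)$.

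One small caveat: Lem.~\ref{lem:bc-char} as stated in this paper carries the additional hypothesis that $\phi$ be \emph{projective}, which the proposition does not assume; the detection statement you actually use (that vanishing of all such twisted pushforwards forces $C=0$) is more primitive and holds without projectivity---it is proved directly in \cite{Kuz07HPD} and is what underlies the characterization in Lem.~\ref{lem:bc-char}. So your invocation of Lem.~\ref{lem:bc-char} with $\shA=0$ is morally right but formally over-cites; pointing instead to the underlying conservativity lemma would make the argument self-contained under the stated hypotheses.
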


\begin{proposition} [{\cite[Thm. 5.6]{Kuz11Bas}}]\label{prop:bcsod} If $f:X \to S$ is a morphism, $D(X) = \langle \shA_1, \ldots, \shA_n \rangle$ is a $S$-linear semiorthogonal decomposition by admissible subcategories, such that the projection $D(X) \to \shA_k$ is of finite cohomological amplitude, for $k=1,\ldots,n$. Let $\phi: T \to S$ be a faithful base-change for $f$, then there is a $T$-linear semiorthogonal decomposition
	$$D(X_T) = \langle \shA_{1T}, \ldots, \shA_{nT} \rangle$$
where $\shA_{kT}$ is the base-change category of $\shA_k$ along $T \to S$.
\end{proposition}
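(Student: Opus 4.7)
The plan is to establish three things for the proposed $T$-linear decomposition: admissibility of each $\shA_{kT}$, pairwise semiorthogonality in the right order, and generation of $D(X_T)$. Admissibility and $T$-linearity of $\shA_{kT}$ are built into Def.~\ref{def:bc-A}, so the substantive tasks are semiorthogonality and generation.

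For semiorthogonality, I would reduce to generators. By Def.~\ref{def:bc-A}, $\shA_{kT}^{\rm perf}$ is thickly generated by objects of the form $\phi_T^* A \otimes f_T^* F$ with $A\in\shA_k^{\rm perf}$ and $F\in\Perf(T)$, where $f_T\colon X_T\to T$ is the base-changed structure map. Fix $i<j$, pick $B\in\shA_j^{\rm perf}$, $A\in\shA_i^{\rm perf}$, and $F,G\in\Perf(T)$, and first treat the case where $\phi$ (hence $\phi_T$) is projective so that standard adjunctions apply. Then
\begin{align*}
\Hom_{X_T}\bigl(\phi_T^* B\otimes f_T^* G,\;\phi_T^* A\otimes f_T^* F\bigr)
&\cong \Hom_X\bigl(B,\;\phi_{T*}(\phi_T^* A\otimes f_T^*(F\otimes G^\vee))\bigr)\\
&\cong \Hom_X\bigl(B,\;A\otimes \phi_{T*}f_T^*(F\otimes G^\vee)\bigr)\\
&\cong \Hom_X\bigl(B,\;A\otimes f^*\phi_*(F\otimes G^\vee)\bigr),
\end{align*}
using $\phi_T$-adjunction, the projection formula along $\phi_T$, and the Tor-independence isomorphism $\phi_{T*}f_T^*\cong f^*\phi_*$ from Def.~\ref{def:bc}. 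Since $\shA_i$ is $S$-linear, the target lies in $\shA_i$, so the Hom vanishes by the original semiorthogonality $\Hom(\shA_j,\shA_i)=0$. This vanishing extends to the thick closures, then to the direct-sum closures $\hat{\shA_{kT}}$, and finally to $\shA_{kT} = \hat{\shA_{kT}}\cap D(X_T)$. The general non-projective case is handled identically after reinterpreting the Hom at the level of the stable $\infty$-categorical tensor product $\Perf(X)\otimes_{\Perf(S)}\Perf(T)$, where the same three identities hold abstractly.

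For generation, I would build $T$-linear projection functors $\pi_{k,T}\colon D(X_T)\to\shA_{kT}$ by base-changing each $S$-linear projection $\pi_k\colon D(X)\to\shA_k$ along $\phi$ (compatibly with the $\Perf(S)$-module structures). The finite cohomological amplitude of $\pi_k$ is precisely what guarantees that $\pi_{k,T}$ is bounded, i.e.\ lands in $D(X_T)$ and hence in $\shA_{kT} = \hat{\shA_{kT}}\cap D(X_T)$ rather than only in the $D_{qc}$-completion. Given any $C\in D(X_T)$, the standard mutation chain built from the $\pi_{k,T}$ produces a filtration of $C$ with graded pieces in the $\shA_{kT}$, yielding $D(X_T) = \langle\shA_{1T},\ldots,\shA_{nT}\rangle$.

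The main obstacle is the generation step: semiorthogonality is essentially formal from base-change compatibilities together with Tor-independence, but generation requires the descent of the whole chain of mutations defining each $\pi_k$ along $\phi$ while preserving boundedness. This is exactly what the finite cohomological amplitude hypothesis buys; without it, one would only obtain a decomposition of $D_{qc}(X_T)$, and intersecting with $D(X_T)$ would not obviously reassemble a genuine SOD of the bounded derived category.
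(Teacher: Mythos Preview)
The paper does not supply its own proof of this proposition; it is simply quoted from \cite[Thm.~5.6]{Kuz11Bas}. Your outline matches the strategy of that reference: semiorthogonality is checked on perfect generators via adjunction, the projection formula, and the faithful base-change isomorphism, then propagated to the completions; generation comes from base-changing the projection functors, with the finite cohomological amplitude hypothesis used exactly as you say, to ensure the base-changed projectors land in $D^b$ rather than merely $D_{qc}$.

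One small caution on your semiorthogonality step: when you pass from vanishing on $\shA_{kT}^{\rm perf}$ to vanishing on the direct-sum closures $\hat{\shA_{kT}}$, you implicitly use that the perfect generators are \emph{compact} in $D_{qc}(X_T)$, so that $\Hom$ out of them commutes with arbitrary direct sums in the target. This is true here and is how Kuznetsov argues, but it is worth making explicit, since $\Hom(-,-)$ does not in general commute with colimits in the second variable. With that in hand, your sketch is correct and faithful to the original source.
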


The above two propositions also holds for perfect complexes, by the way how the base-change categories are constructed and these propositions are proved in {\cite[Thm. 5.6]{Kuz11Bas}}.

\begin{definition}[{\cite{Kuz11Bas}}]  Assume $\shA \subset D(X)$ and $\shB \subset D(Y)$ are $S$-linear subcategories, where $X,Y$ are $S$-schemes, with structural maps $a_X \colon X \to S$ and $a_Y \colon Y \to S$. Assume the fiber square for $X\times_S Y$ is Tor-independent. Then the tensor product of perfect complexes 
	$$\shA^{\rm perf} \otimes_{\Perf(S)} \shB^{\rm perf} \subset \Perf(X \times_S Y)$$
is defined to be the subcategory thickly generated by elements $a_X^* A \otimes a_Y^* B$ for all $A \in \shA^\perf$, $B \in \shB^\perf$. The {\em exterior product of $\shA$ and $\shB$ over $S$} is defined to be
	$$\shA \boxtimes_S \shB : = \shA_Y \cap \shB_X \subset D(X \times_S Y),$$
where $\shA_Y$ is the base-change category of $\shA \subset D(X)$ along $Y \to S$, and $\shB_X$ is the base-change category of $\shB \subset D(Y)$ along $X \to S$.
\end{definition}

Notice if a base-change $\phi \colon T \to S$ is faithful for $X$, then by definition, for any $S$-linear subcategory $\shA \subset D(X)$, the following holds:
	$$\shA^\perf \otimes_{\Perf(S)} \Perf(T) = \shA_T^\perf \subset \Perf(X_T), \qquad
	\shA \boxtimes_{S} D(T) = \shA_T \subset D(X_T),$$
where $\shA_T$ denotes the base-change of category $\shA$ along $T \to S$. Therefore base-change categories can also be expressed by exterior products. 

\begin{remark} \label{rmk:extpd} As in Def. \ref{def:bc-A}, the category $\shA \boxtimes_S \shB$ can also be defined as follows. We denote $\shC^{\rm perf} = \shA^{\rm perf} \otimes_{\Perf(S)} \shB^{\rm perf}$ and consider the completion $\hat{\shC}$ of $\shC^{\rm perf}$ inside $D_{qc}(X \times_S Y)$ under arbitrary direct sums. Then $\shA \boxtimes_S \shB = \hat{\shC} \cap D(X\times_S Y) \subset D(X \times_S Y)$.
\end{remark}

\begin{remark} As mentioned before, the tensor product of perfect complexes $\shA^{\rm perf} \otimes_{\Perf(S)} \shB^{\rm perf}$ agrees with the tensor product of small idempotent-complete stable $\infty$-categories in the sense of \cite{Lur}, \cite{BFN}. If $X$, $Y$ and $X \times_S Y$ are smooth, then $\shA \boxtimes_S \shB = \shA \otimes_{\Perf(S)} \shB$. But in general they are related by 
	$\shA^\perf \otimes_{\Perf(S)} \shB^\perf = (\shA \boxtimes_S \shB) \cap \Perf(X \times_S Y)$ by \cite{Kuz11Bas}, and it is expected that $\shA \boxtimes_S \shB =  {\rm Fun}_{\Perf(S)}^{ex} ( \shA^{\rm perf} \otimes_{\Perf(S)} \shB^{\rm perf}, D(S))$ (for their stable $\infty$-enhancements) by the spirit of \cite{BNP}.
\end{remark}

\begin{lemma}[Associativity] \label{lem:ass:tensor} Assume $X,Y,Z$ are $S$-schemes such that the fiber squares for fiber products $X \times_S Y$, $Y \times_S Z$, $X \times_S Z$ are all Tor-independent. Let $\shA \subset D(X)$, $\shB \subset D(Y)$, $\shC \subset D(Z)$ be $S$-linear admissible subcategories. Then we have a canonical identification
	\begin{align*}
	 (\shA^\perf \otimes_{\Perf(S)} \shB^\perf)  \otimes_{\Perf(S)}\shC^\perf = \shA^\perf  \otimes_{\Perf(S)} ( \shB^\perf \otimes_{\Perf(S)} \shC^\perf ), 
	 \end{align*}
of subcategories of $\Perf(X \times_S Y \times_S Z)$, and
	\begin{align*}
	 (\shA \boxtimes_S \shB) \boxtimes_S \shC = \shA \boxtimes_S (\shB \boxtimes_S \shC) \subset D(X \times_S Y \times_S Z).
	\end{align*}
\end{lemma}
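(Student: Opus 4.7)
The plan is to first establish associativity for tensor products of perfect complexes by identifying both sides with a common ``triple'' thick subcategory of $\Perf(W)$, where $W = X \times_S Y \times_S Z$, and then to deduce the $\boxtimes_S$ statement from this by taking direct-sum completions and intersecting with $D(W)$.

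For the first equality, let $\pi_X, \pi_Y, \pi_Z$ denote the projections from $W$ to $X, Y, Z$, and let $\pi_{XY}\colon W \to X \times_S Y$ be the projection to the double product, with $p_X, p_Y$ the projections from $X \times_S Y$ to $X, Y$; so $\pi_{XY}\circ p_X = \pi_X$ and $\pi_{XY}\circ p_Y = \pi_Y$. By definition, $\shA^\perf \otimes_{\Perf(S)} \shB^\perf \subset \Perf(X \times_S Y)$ is thickly generated by $p_X^* A \otimes p_Y^* B$ for $A \in \shA^\perf$, $B \in \shB^\perf$. Tensoring with $\shC^\perf$ produces a category thickly generated by $\pi_{XY}^*(p_X^* A \otimes p_Y^* B) \otimes \pi_Z^* C$. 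Since derived pullback is symmetric monoidal, this generator simplifies to $\pi_X^* A \otimes \pi_Y^* B \otimes \pi_Z^* C$. A routine triangulated argument (the functor ``pull back along $\pi_{XY}$ and tensor with a fixed $\pi_Z^* C$'' preserves cones, shifts, and retracts, so it takes a thick subcategory to a thick subcategory) shows this identifies the LHS with the thick subcategory of $\Perf(W)$ generated by these triple products. The parallel grouping for the RHS yields the same generators, so the first equality follows.

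For the second equality, recall from Remark~\ref{rmk:extpd} that $\shD \boxtimes_S \shE = \widehat{\shF}\cap D(\cdot)$, where $\shF = \shD^\perf \otimes_{\Perf(S)} \shE^\perf$ and $\widehat{\shF}$ is its direct-sum completion inside $D_{qc}$. Combined with the identity $(\shD \boxtimes_S \shE)^\perf = \shD^\perf \otimes_{\Perf(S)} \shE^\perf$ from \cite{Kuz11Bas} recorded just before the lemma, a second iteration of this description shows that both $(\shA \boxtimes_S \shB) \boxtimes_S \shC$ and $\shA \boxtimes_S (\shB \boxtimes_S \shC)$ equal $\widehat{\shG}\cap D(W)$, where $\shG = \shA^\perf \otimes_{\Perf(S)} \shB^\perf \otimes_{\Perf(S)} \shC^\perf$ (unambiguous by the first part). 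Hence the two sides coincide.

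The main point requiring care is that the outer exterior product be defined at each grouping, i.e.\ the fiber square for $(X \times_S Y) \times_S Z$ (resp. $X \times_S (Y \times_S Z)$) should be Tor-independent, so that Remark~\ref{rmk:extpd} and the perfect-complex identity apply at the second stage. This can be verified from the three pairwise Tor-independence hypotheses by a local computation, using that $\sO_X \otimes^L_{\sO_S} \sO_Y$ is concentrated in degree zero and commuting the remaining derived tensor with $\sO_Z$; alternatively, the generator description from the first paragraph identifies both sides intrinsically with $\widehat{\shG}\cap D(W)$ without going through iterated base-change, which is the route I expect to be cleanest.
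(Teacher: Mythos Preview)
Your proposal is correct and follows essentially the same approach as the paper: both arguments identify each side of the perfect-complex equality with the thick subcategory of $\Perf(W)$ generated by the triple products $\pi_X^*A\otimes\pi_Y^*B\otimes\pi_Z^*C$, and then pass to the $\boxtimes_S$ statement via Remark~\ref{rmk:extpd} by completing in $D_{qc}(W)$ and intersecting with $D(W)$. The paper's proof is two sentences; you have simply unpacked the same steps with more care (the symmetric-monoidality of pullback, the Tor-independence needed for the iterated product), and there is one harmless slip in composition order (you mean $p_X\circ\pi_{XY}=\pi_X$, not $\pi_{XY}\circ p_X$).
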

\begin{proof} The statement for perfect complexes holds since they are both the subcategory of $\Perf(X \times_S Y \times_S Z)$ thickly generated by elements of the form
	$$A \otimes B \otimes C, \qquad A \in \shA^\perf, B \in \shB^\perf, C\in \shC^\perf.$$
Then by above definition and Rmk. \ref{rmk:extpd}, the statement for exterior products holds since they are obtained by taking completion of the same above category in $D_{qc}(X \times_S Y \times_S Z)$ and then taking intersection with bounded coherent category $D(X \times_S Y \times_S Z)$.
\end{proof}

\begin{lemma} \label{lem:tensor:bc} Let $S_k, T_k$ be schemes, $k=1,2$, $X_k$ be $S_1 \times T_k$ schemes, $\shA_k \subset D(X_k)$ be $S_1 \times T_k$-linear admissible subcategory, $Y_k$ be $S_2 \times T_k$-schemes, $\shB_k \subset D(Y_k)$ be $S_2 \times T_k$-linear admissible subcategory. Assume that the fiber squares for the fiber products $X_1 \times_{S_1} X_2$, $Y_1 \times_{S_2} Y_2$, $X_1 \times_{T_1} Y_1$, $X_2 \times_{T_2} Y_2$, and 
	$$ (X_1 \times_{S_1} X_2) \times_{T_1 \times T_2} Y_1 (\times_{S_2} Y_2) = (X_1 \times_{T_1} Y_1) \times_{S_1 \times S_2} (X_2 \times_{T_2} Y_2) = : Z$$
are all Tor-independent. Then there is a canonical identification of subcategories
	\begin{align*}
	&(\shA_1^\perf \otimes_{\Perf(S_1)} \shA_2^\perf) \otimes_{\Perf(T_1 \times T_2)} (\shB_1^\perf \otimes_{\Perf(S_2)} \shB_2^\perf)  \\ 
	=&(\shA_1^\perf \otimes_{\Perf(T_1)} \shB_1^\perf)  \otimes_{\Perf(S_1 \times S_2)}(\shA_2^\perf \otimes_{\Perf(T_2)} \shB_2^\perf) 
	 \end{align*}
of $\Perf(Z)$, and a canonical identification of subcategories of $D(Z)$:
	$$(\shA_1 \boxtimes_{S_1} \shA_2) \boxtimes_{T_1 \times T_2} (\shB_1 \boxtimes_{S_2} \shB_2) = (\shA_1 \boxtimes_{T_1} \shB_1) \boxtimes_{S_1 \times S_2} (\shA_2 \boxtimes_{T_2} \shB_2).$$
\end{lemma}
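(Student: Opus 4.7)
The plan is to imitate the strategy of Lemma \ref{lem:ass:tensor}: first handle the statement for perfect complexes by showing both sides are the thick subcategory of $\Perf(Z)$ generated by the same explicit class of objects, then deduce the statement for $\boxtimes$ by passing to ind-completions in $D_{qc}(Z)$ and intersecting with $D(Z)$ as in Remark \ref{rmk:extpd}.

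First, I would write down explicit generators on both sides. Let $p_k \colon Z \to X_k$, $q_k \colon Z \to Y_k$ be the projections ($k=1,2$). Unpacking the definition of the tensor product of perfect complexes, the left-hand side is thickly generated inside $\Perf(Z)$ by elements of the form $(p_1^* A_1 \otimes p_2^* A_2) \otimes (q_1^* B_1 \otimes q_2^* B_2)$ with $A_k \in \shA_k^\perf$, $B_k \in \shB_k^\perf$; here the outer tensor product makes sense and is perfect because of the Tor-independence assumed for the square defining $Z = (X_1 \times_{S_1} X_2) \times_{T_1 \times T_2} (Y_1 \times_{S_2} Y_2)$. By the same unpacking together with the Tor-independence for $X_1 \times_{T_1} Y_1$, $X_2 \times_{T_2} Y_2$ and $Z = (X_1 \times_{T_1} Y_1) \times_{S_1 \times S_2} (X_2 \times_{T_2} Y_2)$, the right-hand side is thickly generated by elements $(p_1^* A_1 \otimes q_1^* B_1) \otimes (p_2^* A_2 \otimes q_2^* B_2)$. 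By the symmetric monoidal structure on $\Perf(Z)$ these two generating objects are canonically isomorphic, so the two thick subcategories they generate coincide. This proves the identification of the perfect parts.

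Next, for the $\boxtimes$-version, recall from Remark \ref{rmk:extpd} that for $S$-linear categories $\shC_1 \subset D(W_1)$, $\shC_2 \subset D(W_2)$ the exterior product $\shC_1 \boxtimes_S \shC_2 \subset D(W_1 \times_S W_2)$ is obtained by taking the completion $\widehat{\shC_1^\perf \otimes_{\Perf(S)} \shC_2^\perf}$ under arbitrary direct sums inside $D_{qc}(W_1 \times_S W_2)$, then intersecting with the bounded coherent subcategory. Applying this recipe twice to each side of the desired equality, and invoking the identification of perfect parts from the previous paragraph, gives the statement for $\boxtimes$. The relevant Tor-independence hypotheses are exactly those listed in the lemma; they are what allows us to identify the completions inside a common $D_{qc}(Z)$ (rather than two a priori distinct ambient categories) and to conclude that the intersections with $D(Z)$ agree.

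The main obstacle I expect is purely bookkeeping: making sure that all pullbacks and tensor products used to rewrite a generator of one side as a generator of the other are genuinely quasi-isomorphic (not merely coincident on cohomology), which is where one must systematically appeal to the Tor-independence of each of the five fiber squares listed in the hypothesis. Once these base-change and projection-formula identities are in hand, the rearrangement is formal from the symmetric monoidal structure on $\Perf(Z)$, and the passage from $\otimes_{\Perf(-)}$ to $\boxtimes$ is word-for-word parallel to the end of the proof of Lemma \ref{lem:ass:tensor}.
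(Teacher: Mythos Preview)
Your proposal is correct and follows exactly the same approach as the paper, which simply says ``Similar to the proof of Lem.~\ref{lem:ass:tensor}, the statement follows directly from the fact that perfect complexes versions of both sides are generated by the same class of elements.'' You have spelled out more detail than the paper does, but the strategy---identify common generators for the perfect version, then complete and intersect with $D(Z)$---is identical.
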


\begin{proof} Similar to the proof of Lem. \ref{lem:ass:tensor}, the statement follows directly from the fact that perfect complexes versions of both sides are generated by the same class of elements.
\end{proof}

 Let $X,Y$ be $S$-schemes, and $\shA \subset D(X)$, $\shB \subset D(Y)$ be $S$-linear admissible subcategories, with $S$-linear semiorthogonal decompositions $\shA = \langle \shA_1, \ldots, \shA_m\rangle$ and $\shB = \langle \shB_1, \ldots, \shB_n \rangle$, $m,n \ge 1$. Assume the following technical condition holds: the projection functors $D(X) \to \shA$, $D(X) \to {}^\perp \shA$, $D(Y) \to \shB$, $D(Y) \to {}^\perp\shB$, $\shA \to \shA_i$, $\shB \to \shB_j$ are all of finite cohomological amplitudes. This condition is automatically satisfied if $X,Y$ are smooth, which will be the case for all applications of the following proposition in this paper.
 
\begin{proposition}[{\cite[\S 5.5]{Kuz11Bas}}] \label{prop:product} In the above situation, we assume the square for fiber product $X \times_S Y$ is Tor-independent. Then there is an $S$-linear semiorthogonal decomposition
	$$\shA \boxtimes_S \shB = \big\langle \shA_i \boxtimes_S \shB_j \big\rangle_{1\le i \le m, 1 \le j \le n},$$
where the order of the semiorthogonal sequence is any order $\{(i,j)\}$ extending the natural partial order of $\{i \mid 1 \le i \le m\}$ and $\{j \mid 1 \le j \le n\}$, and similarly:
	$$\shA^\perf \otimes_{\Perf(S)} \shB^\perf = \big\langle \shA_i^\perf \otimes_{\Perf(S)} \shB_j^\perf \big\rangle_{1\le i \le m, 1 \le j \le n}.$$
\end{proposition}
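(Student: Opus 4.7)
My plan is to reduce both statements to an iterated application of the base--change semiorthogonal decomposition theorem (Prop.~\ref{prop:bcsod}), combined with the associativity of the exterior product (Lem.~\ref{lem:ass:tensor}). I would first establish the perfect complex version, then extract the $D^b_{\coh}$ version by taking ind--completions in $D_{qc}(X\times_S Y)$ and intersecting back with $D(X \times_S Y)$, as in Def.~\ref{def:bc-A} and Rmk.~\ref{rmk:extpd}. Let $p \colon X \to S$ and $q \colon Y \to S$ denote the structural maps, and $\pi_X \colon X \times_S Y \to X$, $\pi_Y \colon X \times_S Y \to Y$ the two projections.

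Applying Prop.~\ref{prop:bcsod} to the $S$--linear SOD $D(X) = \langle \shA_1, \ldots, \shA_m, {}^\perp \shA\rangle$ along the faithful base change $q$ produces the $Y$--linear SOD $\shA_Y = \langle \shA_{1,Y}, \ldots, \shA_{m,Y}\rangle$ inside $D(X\times_S Y)$. Symmetrically, applying it to $\shB$ along $p$ gives $\shB_X = \langle \shB_{1,X}, \ldots, \shB_{n,X}\rangle$. By Lem.~\ref{lem:ass:tensor}, each intersection $\shA_{i,Y} \cap \shB_{j,X}$ is precisely $\shA_i \boxtimes_S \shB_j$, so the task reduces to verifying that the pieces $\shA_i \boxtimes_S \shB_j$ semiorthogonally generate $\shA \boxtimes_S \shB = \shA_Y \cap \shB_X$ in any linear order extending the product partial order on $\{(i,j)\}$. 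Generation is clear: any generator $\pi_X^*A \otimes \pi_Y^*B$ of $\shA^\perf \otimes_{\Perf(S)} \shB^\perf$ decomposes via the SODs of $A$ and $B$ into iterated extensions of pieces $\pi_X^* A_i \otimes \pi_Y^* B_j$ lying in $\shA_i^\perf \otimes_{\Perf(S)} \shB_j^\perf$.

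The semiorthogonality computation is the technical heart. On generators, adjunction, the projection formula, and the Tor--independent base change $p^* q_* \cong \pi_{X*}\pi_Y^*$ reduce the relevant Hom in $D(X \times_S Y)$ to
\[
\Hom_X\bigl(A,\, A'\otimes p^* q_*\sHom(B,B')\bigr)
\]
(all functors derived). When $j > j'$, the $S$--linearity of $\shB$ combined with SOD semiorthogonality in $D(Y)$ forces $q_* \sHom(B,B') = 0$, so this Hom vanishes immediately. When $i > i'$ and $j \le j'$, one uses instead $i > i'$ together with the $S$--linearity of $\shA$ to absorb the pullback factor $p^* q_*\sHom(B,B')$; this is the more delicate subcase. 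Altogether, whenever $(i,j)$ and $(i',j')$ are comparable in the product partial order the required direction of Hom vanishes, and whenever they are incomparable both directions vanish, making the two pieces fully orthogonal. This full orthogonality in the incomparable case is precisely what makes the SOD independent of the chosen linear extension.

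The main obstacle I anticipate is in the subcase $i > i'$: the object $q_* \sHom(B,B')$ a priori lies in $D(S)$ rather than in $\Perf(S)$, so $S$--linearity of $\shA$ does not absorb the pullback factor verbatim. To handle this I would invoke the finite cohomological amplitude of the projection functors (part of the hypotheses carried over from Prop.~\ref{prop:bcsod}), truncate $q_* \sHom(B,B')$ by its cohomology sheaves, and reduce to perfect approximations where $S$--linearity applies directly; the required vanishing then propagates through the truncation. This is essentially the argument of \cite[\S 5.5]{Kuz11Bas}, to which the proposition is attributed, specialised with bookkeeping for the partial order to the two--fold iterated base change.
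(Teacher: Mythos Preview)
Your proposal is correct and follows the same underlying approach as the paper. Both you and the paper begin by extending the given decompositions to $S$--linear semiorthogonal decompositions $D(X) = \langle \shA_1,\ldots,\shA_m,{}^\perp\shA\rangle$ and $D(Y) = \langle \shB_1,\ldots,\shB_n,{}^\perp\shB\rangle$ of the full ambient categories; the paper then simply cites \cite[\S 5.5]{Kuz11Bas} as a black box, whereas you unpack that citation and sketch its proof via iterated base--change (Prop.~\ref{prop:bcsod}) and an explicit Hom computation. Since you yourself note at the end that your argument ``is essentially the argument of \cite[\S 5.5]{Kuz11Bas}'', there is no genuine divergence---only a difference in how much of Kuznetsov's proof is reproduced versus invoked.
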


\begin{proof} This follows directly from applying {\cite[\S 5.5]{Kuz11Bas}} to $D(X) = \langle \shA, {}^\perp \shA  \rangle =  \langle \shA_1, \ldots, \shA_m, {}^\perp \shA \rangle$ and $D(Y) = \langle \shB, {}^\perp \shB \rangle = \langle \shB_1, \ldots, \shB_n, {}^\perp \shB \rangle$.
\end{proof}


\subsection{Lefschetz varieties and categories} \label{sec:lef}
Lefschetz decompositions introduced by Kuznetsov in \cite{Kuz07HPD} (see also \cite{Kuz08Lef, JLX17, P18, KP18}) play an essential role in HPD theory. A Lefschetz decomposition is a special kind of semiorthogonal decomposition which behaves well under a given autoequivalence $T$. The key example of interest will be when $\shA$ is a $\PP(V)$-linear category and $T = \otimes \sO(1)$. For this section we assume $\shA$ is an admissible subcategory of some projective variety over the fixed base-scheme $B$.
 
\begin{definition}[{\cite{Kuz07HPD}}] Let $\shA$ be an admissible category and $T$ be an autoequivalence of $\shA$. A {\em right Lefschetz decomposition} of $\shA$ is a semiorthogonal decomposition of the form
	\begin{equation}\label{lef:A} 
	\shA = \langle \shA_0, T(\shA_1), \ldots, T^{i-1}(\shA_{m-1})\rangle,
	\end{equation}
	where $\shA_0 \supset \shA_1 \supset \cdots \supset \shA_{m-1}$ is a descending sequence of admissible subcategories. Dually, a {\em left Lefschetz decomposition} of $\shA$ is a semiorthogonal decomposition
	\begin{equation}\label{duallef:A} 
	\shA = \langle T^{1-m} \shA_{1-m}, \ldots, T^{-1} \shA_{-1} , \shA_{0}\rangle,
	\end{equation}	
	where $\shA_{1-m} \subset \cdots \subset \shA_{-1} \subset \shA_{0}$ is an ascending sequence of admissible subcategories. 
	\end{definition}
Here $T^k$ denotes the $k$-fold self-composition of $T$ for $k \ge 1$, $T^0 := \Id$, and $T^{-k} := (T^{-1})^k$ for $k \ge 1$.	 $T^k(\shB)$ denotes the image of a subcategory $\shB\subset \shA$ under the $T^k$. 

A Lefschetz decomposition for $\shA$ is totally determined by the component $\shA_0$.
\begin{lemma}[\cite{Kuz08Lef}, Lem. 2.18] \label{lem:lef:A0}
\begin{enumerate} 
	\item Assume there is a right Lefschetz decomposition (\ref{lef:A}) of $\shA$, then $\shA_k = {}^\perp (T^{-k} \shA_0) \cap \shA_{k-1}$ for $k = 1,2, \ldots, m-1$.
	\item Assume there is a left Lefschetz decomposition (\ref{duallef:A}) of $\shA$, then $\shA_{-k} = (T^k \shA_0)^\perp \cap \shA_{1-k}$ for $k=1,2,\ldots, m-1$.
	\end{enumerate}
\end{lemma}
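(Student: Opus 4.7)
The plan is to prove part (1) by double inclusion; part (2) then follows by applying the same argument with $T^{-1}$ in place of $T$ (the roles of left and right orthogonals are exchanged, and the ascending chain becomes a descending one).

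For the inclusion $\shA_k \subset {}^\perp(T^{-k}\shA_0) \cap \shA_{k-1}$, the containment $\shA_k \subset \shA_{k-1}$ is part of the hypothesis on the Lefschetz decomposition, while $\shA_k \subset {}^\perp(T^{-k}\shA_0)$ follows from the semiorthogonality relation $\Hom(T^k\shA_k, \shA_0) = 0$ in (\ref{lef:A}) (valid since $k \ge 1$) after applying the autoequivalence $T^{-k}$.

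For the reverse inclusion, the key observation is that since $T$ is an autoequivalence of $\shA$, twisting (\ref{lef:A}) by any power of $T$ produces another semiorthogonal decomposition of $\shA$. The two twists relevant here are:
\begin{align*}
\shA &= \langle T^{-k}\shA_0,\, T^{-(k-1)}\shA_1,\, \ldots,\, T^{-1}\shA_{k-1},\, \shA_k,\, T\shA_{k+1},\, \ldots,\, T^{m-1-k}\shA_{m-1}\rangle, \\
\shA &= \langle T^{-(k-1)}\shA_0,\, T^{-(k-2)}\shA_1,\, \ldots,\, T^{-1}\shA_{k-2},\, \shA_{k-1},\, T\shA_k,\, \ldots,\, T^{m-k}\shA_{m-1}\rangle.
\end{align*}
In the first, $\shA_k$ is characterized by right-orthogonality to all earlier components and left-orthogonality to all later components. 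In the second, the membership $B \in \shA_{k-1}$ yields the analogous orthogonalities relative to the neighbours of $\shA_{k-1}$.

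Given $B \in {}^\perp(T^{-k}\shA_0) \cap \shA_{k-1}$, the main step is then to verify each orthogonality required by the first decomposition. The condition $\Hom(B, T^{-k}\shA_0) = 0$ is the standing hypothesis. For the remaining right-orthogonalities $\Hom(B, T^{-j}\shA_{k-j}) = 0$ with $1 \le j \le k-1$, the inclusion $\shA_{k-j} \subset \shA_{k-1-j}$ from the descending chain $\shA_0 \supset \shA_1 \supset \cdots \supset \shA_{m-1}$ reduces each to $\Hom(B, T^{-j}\shA_{k-1-j}) = 0$, which is supplied by $B \in \shA_{k-1}$ through the second decomposition. Similarly, for the left-orthogonalities $\Hom(T^j\shA_{k+j}, B) = 0$ with $1 \le j \le m-1-k$, the inclusion $\shA_{k+j} \subset \shA_{k-1+j}$ reduces each to an orthogonality already imposed by $B \in \shA_{k-1}$. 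The bookkeeping of matching indices between the two shifted decompositions is the only step requiring care, but there is no conceptual obstacle; it yields $B \in \shA_k$ and completes the proof.
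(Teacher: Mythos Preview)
Your argument is correct. The forward inclusion is immediate, and for the reverse inclusion you correctly use that in any semiorthogonal decomposition $\shA = \langle \shC_1,\ldots,\shC_n\rangle$ with admissible components one has $\shC_i = {}^\perp\langle \shC_1,\ldots,\shC_{i-1}\rangle \cap \langle \shC_{i+1},\ldots,\shC_n\rangle^\perp$; the two shifted decompositions together with the chain condition $\shA_{k-j}\subset\shA_{k-1-j}$ then give exactly the required orthogonalities.

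As for comparison: the paper does not supply its own proof of this lemma but simply cites \cite{Kuz08Lef}, Lemma~2.18. Your proof is essentially the standard one and matches Kuznetsov's approach there.
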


Assume $\shA$ is a Lefschetz category, denote $\foa_k: = \shA_{k+1}^\perp \cap \shA_k$, the right orthogonal of $\shA_{k+1}$ inside $\shA_{k}$ for $0 \le k \le m-1$. Then $\foa_k$'s are admissible subcategories, and for $0 \le k \le m-1$, 
	$$\shA_k = \langle \foa_{k}, \foa_{k+1}, \ldots, \foa_{m-1}\rangle.$$
Dually let $\foa_{-k}: = {}^\perp \shA_{-k-1} \cap \shA_{-k}$ be the left orthogonal of $\shA_{-k-1}$ inside $\shA_{-k}$ for $0 \le k \le m-1$. Then $\foa_{-k}$'s are also admissible subcategories, and for $0 \le k \le m-1$,
	$$\shA_{-k} = \langle \foa_{1-m}, \ldots, \foa_{-1-k}, \foa_{-k}\rangle.$$
Let $\alpha_0: \shA_0 \to D(X)$ be the inclusion functor, $\alpha_0^*: D(X) \to \shA_0$ be its left adjoint and $\alpha_0^!: D(X) \to \shA_0$ be its right adjoint.

\begin{lemma} \label{lem:sod:A_0} 
\begin{enumerate} [leftmargin=*]
	\item Assume there is a right Lefschetz decomposition (\ref{lef:A}) of $\shA$, then for any integer $0 \le k\le m-1$, $\alpha_0^*$ is fully faithful on $T^{k+1}(\foa_{k})$. If we denote 
	$$\foa_k' := \alpha_0^*(T^{k+1}(\foa_{k})) \subset \shA_0 \quad \text{for}\quad k = 0,1,\ldots, m-1,$$
then $(\foa_0', \foa_1', \ldots, \foa_{m-1}')$ is a semiorthogonal sequence, and for $k=0,\ldots, m-1$,
	$$\langle \foa_0', \ldots, \foa_{k-1}', T(\shA_1), \ldots, T^k(\shA_{k})\rangle = \langle T(\shA_0), \ldots, T^k(\shA_{k-1})\rangle,$$
(where we regard $\shA_{m}=0$.) In particular, $\shA_0 = \langle \foa_0', \foa_1', \ldots, \foa_{m-1}'\rangle$.
	\item Assume there is a left Lefschetz decomposition (\ref{duallef:A}) of $\shA$, then for any integer $k$, $0 \le k\le m-1$, $\alpha_0^!$ is fully faithful on $T^{-k}(\foa_{-k+1})$. If we denote 
	$$\foa_{-k}' := \alpha_0^!(T^{-1-k}(\foa_{-k})) \subset \shA_0 \quad \text{for}\quad k = 0,1,\ldots, m-1,$$	
then $(\foa_{1-m}', \ldots, \foa_{-1}', \foa_{-0}')$ is a semiorthogonal sequence, and and for $k=0,\ldots, m-1$, 
	$$\langle T^{-k}(\shA_{-k}), \ldots, T^{-1}(\shA_{-1}), \foa_{1-k}' \ldots, \foa_{-0}' \rangle = \langle T^{-k}(\shA_{1-k}), \ldots, T^{-1}(\shA_{0})\rangle.$$
(where we regard $\shA_{-m}=0$.) In particular, $\shA_0 = \langle \foa_{1-m}', \ldots, \foa_{-1}', \foa_{-0}' \rangle$. 
	\end{enumerate} 	
\end{lemma}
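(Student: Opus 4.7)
The plan is induction on $k$, anchored by two orthogonality vanishings which come directly from the Lefschetz SOD: (a) $\Hom_\shA^*(T^j\shA_j,\, T^{k+1}\foa_k) = 0$ for $j = k+1, \ldots, m-1$; and (b) $\Hom_\shA^*(T^{k+1}\foa_k,\, T^j\shA_j) = 0$ for $j = 1, \ldots, k$. Both reduce to the Lefschetz SOD after applying a power of $T^{-1}$: (a) uses the inclusions $\shA_{k+1+\ell} \subset \shA_\ell$ together with $\Hom^*(T^\ell\shA_\ell, \shA_0) = 0$ for $\ell := j-k-1 \ge 1$, while the boundary $\ell=0$ is $\foa_k \subset \shA_{k+1}^\perp$. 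Analogously, (b) uses the nested chain $\shA_j \subset \shA_{j-1} \subset \cdots \subset \shA_{k-l}$ (valid since $j - 1 \ge k - l$ for $l := k+1-j$) combined with the Lefschetz relation $\Hom^*(\shA_k, T^{-l}\shA_{k-l}) = 0$.

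Vanishing (a) places $T^{k+1}\foa_k$ inside the initial segment $\shC_k := \langle \shA_0, T\shA_1, \ldots, T^k\shA_k\rangle$ of the Lefschetz SOD, so the canonical triangle $B'' \to B \to \alpha_0\alpha_0^*B$ associated with $\shA = \langle \shA_0, \shA_0^\perp\cap\shA\rangle$ has $B'' \in \langle T\shA_1, \ldots, T^k\shA_k\rangle$ for $B \in T^{k+1}\foa_k$. Applying $\Hom(B_1, -)$ to this triangle and invoking vanishing (b) gives $\Hom^*(B_1, B_2) \simeq \Hom^*(\alpha_0^*B_1, \alpha_0^*B_2)$, which is the fully faithfulness asserted in~(1); we set $\foa_k' := \alpha_0^*(T^{k+1}\foa_k) \subset \shA_0$. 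The displayed identity $\langle \foa_0', \ldots, \foa_{k-1}', T\shA_1, \ldots, T^k\shA_k\rangle = \langle T\shA_0, \ldots, T^k\shA_{k-1}\rangle$ is then proved by induction on $k$ (base $k=0$ vacuous): at the step $k \to k+1$, expand $T^{k+1}\shA_k = \langle T^{k+1}\foa_k, T^{k+1}\shA_{k+1}\rangle$ on the right-hand side, apply the inductive hypothesis to rewrite its initial portion, and use the above adjunction triangle to swap $T^{k+1}\foa_k$ for the equivalent $\foa_k'$ relocated to sit just after $\foa_{k-1}'$. The semiorthogonality of the new $\foa_k'$ with each $\foa_i'$ ($i < k$) is checked in parallel: by adjunction $\Hom(\foa_k', \foa_i') \simeq \Hom(T^{k+1}\foa_k, \foa_i')$, and substituting the adjunction triangle for $\foa_i'$ reduces the vanishing to (a) and (b).

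Finally, pushing the induction one more step to $k = m$ (with the convention $\shA_m = 0$) gives $\shA = \langle \foa_0', \ldots, \foa_{m-1}', T\shA_1, \ldots, T^{m-1}\shA_{m-1}\rangle$; comparing with the Lefschetz SOD $\shA = \langle \shA_0, T\shA_1, \ldots, T^{m-1}\shA_{m-1}\rangle$ and invoking uniqueness of the left SOD component yields the ``in particular'' statement $\shA_0 = \langle \foa_0', \ldots, \foa_{m-1}'\rangle$. The main obstacle I anticipate is the bookkeeping in the inductive step: no single calculation is difficult, but tracking precisely which subcategory each adjunction triangle produces and which semiorthogonality is invoked at each moment requires care. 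The dual assertion~(2) for left Lefschetz decompositions follows by an exactly parallel argument with the right adjoint $\alpha_0^!$ in place of $\alpha_0^*$, the roles of left and right mutations interchanged, and the shifts $T^{-k-1}\foa_{-k}$ replacing $T^{k+1}\foa_k$.
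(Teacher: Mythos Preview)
Your argument is correct. The paper itself does not give a proof but simply refers to \cite[Lem.~4.3]{Kuz07HPD} and \cite[Lem.~2.12]{JLX17}; what you have written is essentially the standard mutation argument found there, organized around the two orthogonality vanishings (a) and (b), the adjunction triangle for $\alpha_0^*$, and the induction on $k$. One cosmetic remark: in your justification of (b), the ``nested chain $\shA_j \subset \shA_{j-1} \subset \cdots \subset \shA_{k-l}$'' collapses to a single inclusion since $k-l = j-1$, and the vanishing $\Hom^*(\shA_k, T^{-l}\shA_{k-l}) = 0$ you invoke is not a single semiorthogonality from the Lefschetz SOD but follows from $\Hom^*(T^l\shA_l,\shA_0)=0$ together with the inclusions $\shA_k \subset \shA_l$ (valid since $j\ge 1$ gives $k\ge l$) and $\shA_{k-l}\subset \shA_0$; you clearly have this in mind, but in a write-up it is worth making explicit.
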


\begin{proof} See \cite[Lem. 4.3]{Kuz07HPD},  \cite[Lem. 2.12]{JLX17}.
\end{proof}

\begin{definition} A $\PP(V)$-linear category $\shA$ is called a {\em Lefschetz category} with {\em Lefschetz center} $\shA_0 \subset \shA$ if $\shA$ admits right Lefschetz decomposition (\ref{lef:A}) and left Lefschetz decomposition (\ref{duallef:A}) with respect to the autoequivalence $T = \otimes \sO_{\PP(V)}(H) \colon \shA \to \shA$, where $\shA_k$ is determined by $\shA_0$ through the relation of Lem. \ref{lem:lef:A0}. The number $m \ne 0$ in (\ref{lef:A}) (such that $\shA_{m-1} \ne 0$) is called the {\em length} of the Lefschetz category $\shA$, and denoted by
	$${\rm length} (\shA) = m.$$ 
\end{definition}

Given a right Lefschetz decomposition of $\shA$, there is a canonical left Lefschetz category of the same center, and vice versa, see \cite[Lem. 2.18]{Kuz08Lef}. The next lemma as an extension of \cite[Lem. 2.18]{Kuz08Lef}, shows the relation between right and left Lefschetz decompositions. This implies that for $\shA$ an admissible subcategory of some smooth projective variety over $S$ (which is always assumed to be true in this paper), then $\shA$ is a Lefschetz category if and only if it admits a right Lefschetz decomposition (\ref{lef:A}), or equivalently if it admits a left Lefschetz decomposition (\ref{duallef:A}), with the same center.

\begin{lemma}\label{lem:lef&dual}
	\begin{enumerate} [leftmargin=*]
	\item Given a right Lefschetz decomposition (\ref{lef:A}) of $\shA$, then there is a canonical left Lefschetz decomposition of the form (\ref{duallef:A}) given by
		$$\shA_{-0} = \shA_0,\quad \shA_{-k} := (T^k(\shA_0))^\perp \cap \shA_{-k+1} \quad \text{for} \quad k=1,2,\ldots, m-1,$$ 
Moreover, if the Serre functor $S_0$ of $\shA_0$ exists, then $\shA_{-k} \subset \shA_0$ can also be given by
		\begin{align}\label{sod:A-k:S_0}
	\shA_{-k} = \langle S_0 (\foa_{k}'), \ldots, S_0 (\foa_{m-1}') \rangle, \quad k=0,1,\ldots, m-1.
		\end{align}
The building components $\foa_{-k}$'s are determined by $\foa_k$'s via
	\begin{align*}
	\foa_{-0} &= \alpha_0^*(T(\foa_{0}))  = T(\foa_0);\\
	\foa_{-k} & = \LL_{\langle \foa_{1-k}, \ldots, \foa_{-0} \rangle}\, \foa_{k}' = \LL_{\langle \foa_{0}', \ldots, \foa_{k-1}' \rangle} \, \foa_{k}' 	\quad \text{for} \quad k = 1,2,\ldots,m-1.
	\end{align*}
In particular one has 
	$\foa_{-k} \simeq \foa_k' \simeq \foa_k,$ \text{and} $\shA_{-k} \simeq \langle \foa_{k}', \foa_{k+1}', \ldots, \foa_{m-1}' \rangle$
	\medskip
	\item Dually, given a left Lefschetz decomposition (\ref{duallef:A}) of $\shA$, then there is a canonical right Lefschetz decomposition of the form (\ref{lef:A}) given by
		$$\shA_0 = \shA_{-0}, \quad  \shA_{k} := {}^\perp (T^{-k} \shA_0) \cap \shA_{k-1},\quad \text{for} \quad k=1,2,\ldots, m-1,$$ 
Moreover, if the Serre functor $S_0$ of $\shA_0$ exists, then $\shA_{k} \subset \shA_0$ can also be given by
		\begin{align}\label{sod:Ak:S_0}
		\shA_{k} = \langle S_0^{-1} (\foa_{1-i}'), \ldots, S_0^{-1} (\foa_{-k}') \rangle, \quad k=0,1,\ldots, m-1.
		\end{align}	
The building components $\foa_{k}$'s are determined by $\foa_{-k}$'s via
	\begin{align*}
	\foa_0 &= \alpha_0^!(T^{-1}\foa_{-0})  = T^{-1}(\foa_{-0});\\
	\foa_{k} & = \RR_{\langle \foa_{0}, \ldots, \foa_{k-1} \rangle}\, \foa_{-k}' = \RR_{\langle \foa_{1-k}', \ldots, \foa_{-0}' \rangle} \, \foa_{-k}' 	\quad \text{for} \quad k = 1,2,\ldots,m-1.
	\end{align*}
In particular one has 
	$\foa_{k} \simeq \foa_{-k}' \simeq \foa_{-k},$ \text{and} $\shA_{k} \simeq \langle \foa_{1-m}', \ldots, \foa_{-1-k}', \ldots, \foa_{-k}' \rangle.$
	\end{enumerate} 
\end{lemma}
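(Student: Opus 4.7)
The plan is to prove part (1); part (2) then follows by applying (1) to the autoequivalence $T^{-1}$ in place of $T$, under which right and left Lefschetz decompositions interchange and indices are negated.

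The strategy is to refine the center using Lemma~\ref{lem:sod:A_0}(1) and then convert the right Lefschetz decomposition into a left one by a sequence of mutations. From that lemma one has $\shA_0 = \langle \foa_0', \foa_1', \ldots, \foa_{m-1}'\rangle$ together with the bridge identities
$$\langle T\shA_0, T^2\shA_1, \ldots, T^k\shA_{k-1}\rangle = \langle \foa_0', \foa_1', \ldots, \foa_{k-1}', T\shA_1, T^2\shA_2, \ldots, T^k\shA_k\rangle$$
for $1 \le k \le m-1$. Substituting the central decomposition into the right Lefschetz decomposition and applying the bridge identities successively, twisted by $T^{-(m-1)}, T^{-(m-2)}, \ldots$, I would prove by induction on $m$ that $\shA$ admits the semiorthogonal decomposition
$$\shA = \langle T^{1-m}\shB_{1-m}, \ldots, T^{-1}\shB_{-1}, \shA_0\rangle, \qquad \shB_{-k} := \langle \foa_k', \foa_{k+1}', \ldots, \foa_{m-1}'\rangle,$$
with $\shB_{1-m} \subset \cdots \subset \shB_{-1} \subset \shA_0$ a nested chain of admissible subcategories.

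Next I would identify $\shB_{-k}$ with $\shA_{-k} := (T^k\shA_0)^\perp \cap \shA_{-k+1}$. The inclusion $\shB_{-k} \subseteq \shA_{-k}$ is forced by the semiorthogonality of the displayed left decomposition; the reverse inclusion holds because any element of $(T^k \shA_0)^\perp \cap \shB_{-k+1}$ must be right orthogonal in particular to the $\foa_{k-1}'$ piece of $\shB_{-k+1}$ (this piece is extracted from $T^k \shA_0$ via the bridge identity), and hence lies in $\shB_{-k}$. This confirms that the constructed left Lefschetz decomposition matches the one defined via orthogonals.

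Formula (\ref{sod:A-k:S_0}) then follows from the Serre duality relation $\shC^\perp \cap \shA_0 = S_0({}^\perp \shC \cap \shA_0)$ applied to the admissible chain, together with the fact that $S_0$ permutes (up to shift) the factors of a semiorthogonal decomposition of $\shA_0$. The mutation formula $\foa_{-k} = \LL_{\langle \foa_0', \ldots, \foa_{k-1}'\rangle}\,\foa_k'$ is read off from $\shA_{-k} = \langle \foa_k', \shA_{-(k+1)}\rangle$: the defining left orthogonal ${}^\perp\shA_{-(k+1)} \cap \shA_{-k}$ is precisely this left mutation, and since mutations through orthogonal components are equivalences one obtains $\foa_{-k} \simeq \foa_k' \simeq \foa_k$. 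The main obstacle is the first step: the inductive bookkeeping of the iterated bridge identities, keeping track of the powers of $T$ by which each block is twisted and verifying semiorthogonality at each intermediate stage; this relies essentially on the nesting $\shA_0 \supset \shA_1 \supset \cdots \supset \shA_{m-1}$ to ensure the rewriting closes up consistently.
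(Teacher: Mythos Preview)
There is a genuine gap in your approach: the subcategories $\shB_{-k} := \langle \foa_k', \ldots, \foa_{m-1}'\rangle$ are \emph{not} the correct pieces of the left Lefschetz decomposition, and the semiorthogonal decomposition you write down fails in general. The point is that $\shB_{-k}$ is the \emph{left} orthogonal of $\langle \foa_0', \ldots, \foa_{k-1}'\rangle$ inside $\shA_0$, whereas the correct $\shA_{-k}$ must be the \emph{right} orthogonal. This is exactly why the Serre functor $S_0$ enters: one has $\shA_{-k} = \langle S_0(\foa_k'), \ldots, S_0(\foa_{m-1}')\rangle = S_0(\shB_{-k})$, which is only abstractly equivalent to your $\shB_{-k}$, not equal as a subcategory. (The statement of the lemma itself only asserts $\shA_{-k} \simeq \langle \foa_k', \ldots, \foa_{m-1}'\rangle$, not equality.)

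To see the failure concretely, take $m=2$. Your claim becomes $\shA = \langle T^{-1}\foa_1', \shA_0\rangle$, whose semiorthogonality requires $\Hom(T\shA_0, \foa_1') = 0$. But by the bridge identity $T\shA_0 = \langle \foa_0', T\shA_1\rangle$, so in particular $\foa_0' \subset T\shA_0$; hence you would need $\Hom(\foa_0', \foa_1') = 0$. In the decomposition $\shA_0 = \langle \foa_0', \foa_1'\rangle$ only the reverse vanishing $\Hom(\foa_1', \foa_0') = 0$ holds, so this fails in general. The same obstruction blocks your proposed identification $\shB_{-k} = (T^k\shA_0)^\perp \cap \shA_{-k+1}$: the piece $\foa_{k-1}'$ does lie in $\langle T\shA_0,\ldots,T^k\shA_{k-1}\rangle$, but $\Hom(\foa_{k-1}',\foa_k')$ need not vanish.

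The paper's proof fixes this by running the induction (on $k$, not on $m$) with the Serre-functor definition (\ref{sod:A-k:S_0}) of $\shA_{-k}$ from the start. The key step is the rewriting
\[
\shA_0 = \langle \foa_0', \ldots, \foa_{m-1}'\rangle = \langle S_0(\foa_k'), \ldots, S_0(\foa_{m-1}'),\; \foa_0', \ldots, \foa_{k-1}'\rangle = \langle \shA_{-k},\; \foa_0', \ldots, \foa_{k-1}'\rangle,
\]
which places $\shA_{-k}$ to the \emph{left} of $\foa_0',\ldots,\foa_{k-1}'$; combined with the bridge identity this yields $\langle \shA_{-k}, T\shA_{-k+1}, \ldots, T^k\shA_0\rangle = \langle \shA_0, T\shA_1, \ldots, T^k\shA_k\rangle$. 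The orthogonal description and the mutation formulas for $\foa_{-k}$ are then consequences, not inputs. In short, the Serre functor is not an afterthought for deriving (\ref{sod:A-k:S_0}); it is essential to produce the correct subcategories in the first place.
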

\begin{proof}
\noindent \text{(1)}. Let $\shA_k$'s be defined by (\ref{sod:A-k:S_0}), then they satisfy $\shA_{-k} \simeq \langle \foa_{k}', \foa_{k+1}', \ldots, \foa_{m-1}' \rangle$ by definition. Following \cite[Lem. 2.18]{Kuz08Lef}, we claim for $k = 0,1,\ldots, m-1$, the sequence $\shA_{-k}, T(\shA_{-k+1}), \ldots, T^k(\shA_0)$ is a semiorthogonal sequence, and 
	\begin{equation} \label{eqn:Lef:pm}
	\langle \shA_{-k}, T(\shA_{-k+1}), \ldots, T^k(\shA_0) \rangle = \langle \shA_0, T(\shA_1), \ldots, T^k(\shA_k) \rangle.
	\end{equation}
Then the lemma follows from the case $k=m-1$. The claim can be shown inductively. For $k=0$ this is trivial. Assume it is true for $k-1$, then consider the semiorthogonal collection
	$$\langle T(\shA_{-k+1}), \ldots, T^k(\shA_0) \rangle =T \langle \shA_0, T(\shA_1), \ldots, T^{k-1}(\shA_{k-1})\rangle = \langle T(\shA_0), \ldots, T^k(\shA_{k-1})\rangle.$$
Then by Lem. \ref{lem:sod:A_0}, the right hand side is equal to
	$\langle \foa_0' , \ldots, \foa_{k-1}', T(\shA_1), \ldots, T^k(\shA_{k})\rangle. $
Consider the semiorthogonal decomposition of $\shA_0$:
	\begin{align} 
	\shA_0 & \notag= \langle \foa_0', \ldots, \foa_{k-1}', \foa_{k}', \ldots,  \foa_{m-1}'\rangle  =  \langle S_0( \foa_{k}'), \ldots,  S_0(\foa_{m-1}'), ~  \foa_0', \ldots, \foa_{k-1}' \rangle \\
	& =  \langle \shA_{-k}, ~  \foa_0', \ldots, \foa_{k-1}' \rangle . \label{eqn:A_0,A_-k}
	\end{align}
Hence $\shA_{-k}$ together with $ \foa_0', \ldots, \foa_{k-1}'$, and $T(\shA_1), \ldots, T^k(\shA_{k})$ forms a semiorthogonal decomposition of $\langle \shA_0, T(\shA_1), \ldots, T^k(\shA_{k})\rangle$. Therefore (\ref{eqn:Lef:pm}) holds. In particular if $k=m-1$, then this implies that $\langle T^{1-m}(\shA_{1-m}), \ldots, T^{-1}(\shA_{-1}), \shA_0 \rangle = \shA$ forms a left Lefschetz decomposition. Then by Lem. \ref{lem:lef:A0}, $\{\shA_{-k}\}_{0 \le k \le m-1}$ automatically satisfies $\shA_{-k} = (T^k(\shA_0))^\perp \cap \shA_{-k+1}$. Denote $\foa_{-k} = {}^\perp \shA_{-k-1} \cap \shA_{-k}$ as usual, then from (\ref{eqn:A_0,A_-k}) one has
	$$\langle \foa_0', \ldots, \foa_{k}'  \rangle = \langle  \foa_{-k},\ldots, \foa_{-0}\rangle \quad \text{for} \quad k=0,1,2,\ldots, m-1.$$
The description of $\foa_{-k}$ in terms of mutations of $\foa_{k}'$ follows directly from induction.

\medskip
\noindent \text{(2)}. Dually, define $\shA_k$ by (\ref{sod:Ak:S_0}), then one can inductively show for $k=1,\ldots,m-1$,
		$$\langle T^{-k}(\shA_0), T^{-k+1}(\shA_1), \ldots, T^{-1}(\shA_{k-1}), \shA_k \rangle = \langle T^{-k}(\shA_{-k}), \ldots, T^{-1}(\shA_{-1}), \shA_{-0}\rangle.$$
The lemma follows from $k=m-1$.
\end{proof}

\subsubsection{$\PP(V)$-linear Lefschetz category} In HPD theory we will mainly consider {$\PP(V)$-linear Lefschetz categories}.

\begin{definition} A {\em $\PP(V)$-linear Lefschetz category $\shA$} is a $\PP(V)$-linear $\shA$ with a Lefschetz structure with respect to the autoequivalence $T = - \otimes \sO_{\PP(V)}(1)$. It is common to write:
	$$\shA = \langle \shA_0, \shA_1(1), \ldots, \shA_{m-1}(m-1)\rangle,$$
where $\shA_*(k)$ denotes the image of $\shA_*$ under the autoequivalence $\otimes \sO_{\PP(V)}(k)$, for $k \in \ZZ$. A variety $X$ with a morphism $X \to \PP(V)$ is called a {\em Lefschetz variety} if $\shA = D(X)$ is a $\PP(V)$-linear Lefschetz category. 
\end{definition}

In \cite{Kuz07HPD} it is required that a $\PP(V)$-linear Lefschetz category should satisfies
	$$ {\rm length} (\shA) < \rank V = :N.$$
This condition is called {\em moderate} in \cite{P18}. We will also stick to this convention and require the moderateness of all Lefschetz categories in this paper. In fact, the next result, which is a generalization of \cite[Cor. 6.19]{P18}, shows that a non-moderate Lefschetz category can always be refined to be a moderate one:

\begin{lemma} \label{lem:non-moderate Lef}  \label{lem:lef:nonmoderate} Assume $\shA$ is a $\PP(V)$-linear Lefschetz category of length $m$ with respect to the action $T = \otimes \sO_{\PP(V)}(1)$, with Lefschetz center $\shA_0$ and components $\shA_k$.
	\begin{enumerate}
	\item Then length $m$ of $\shA$ satisfies $m \le N$;
	\item If $m = N$, then the action functor $\act \colon \shA \boxtimes D(\PP(V)) \to \shA$ is fully faithful on $\shA_{N-1} \boxtimes D(\PP(V))$. Moreover, if we denote 
	$$\shA'_k = \shA_k \cap \shA_{N-1}^{\perp} \quad \text{for} \quad 0 \le k \le N-2,$$
then the image $\act(\shA_{N-1} \boxtimes D(\PP(V))) \subset \shA$ is right orthogonal to the image $\act(\shA'_{k} \boxtimes D(\PP(V))) \subset \shA$ for all $0 \le k \le N-2$.
	\item If $m=N$, denote the right orthogonal of $\act(\shA_{N-1} \boxtimes D(\PP(V)))$ inside $\shA$ by
		$$\shA' := \act(\shA_{N-1} \boxtimes D(\PP(V)))^{\perp} \cap \shA \subset \shA.$$
Then $\shA'$ is a {\em moderate} $\PP(V)$-linear Lefschetz category, with Lefschetz center
	$$\shA'_0 = \shA_0 \cap \shA_{N-1}^{\perp},$$
and Lefschetz components $\shA'_k = \shA_k \cap \shA_{N-1}^{\perp}$ for $k \ge 0$. 
	\end{enumerate}
\end{lemma}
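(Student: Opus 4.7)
I would establish the three parts in order, exploiting $\PP(V)$-linearity together with the Beilinson structure on $D(\PP(V))$ throughout. For part (1), the key observation is that the twists $\shA_{m-1}, \shA_{m-1}(1), \ldots, \shA_{m-1}(m-1)$ all lie in $\shA$ (by $\PP(V)$-linearity and $\shA_{m-1} \subset \shA_k$ for $0 \le k \le m-1$) and form a semi-orthogonal sequence of $m$ copies of $\shA_{m-1}$ by Lefschetz semi-orthogonality. To deduce $m \le N$, I invoke the Beilinson decomposition $D(\PP(V)) = \langle \sO, \sO(1), \ldots, \sO(N-1)\rangle$: the Koszul/Euler resolution expresses $\sO(N)$ as a twisted complex built from $\sO, \sO(1), \ldots, \sO(N-1)$. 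Applying the action to a nonzero $A \in \shA_{m-1}$ forces $A(N)$ to lie in the subcategory generated by $A, A(1), \ldots, A(N-1)$; combined with the $m$-term semi-orthogonal sequence in $\shA$, this produces a contradiction when $m > N$.

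For part (2) with $m = N$, Prop. \ref{prop:product} applied to the Beilinson decomposition yields
$$\shA_{N-1} \boxtimes D(\PP(V)) = \langle \shA_{N-1} \boxtimes \sO(k)\rangle_{k=0,\ldots,N-1},$$
each factor being equivalent to $\shA_{N-1}$. The action sends this to the semi-orthogonal sequence $\langle \shA_{N-1}(k)\rangle_{k=0,\ldots,N-1}$ in $\shA$. Fully faithfulness of $\act$ amounts to the identification $\Hom_\shA(A(i), A'(j)) = \Hom_{\shA_{N-1}}(A, A') \otimes \Hom_{D(\PP(V))}(\sO(i), \sO(j))$ for $A, A' \in \shA_{N-1}$ and $0 \le i, j \le N-1$: for $i > j$, the right-hand side vanishes via $H^*(\PP(V), \sO(j-i)) = 0$ in the range $-N < j - i < 0$, and the left-hand side vanishes by the left Lefschetz semi-orthogonality (Lem. \ref{lem:lef&dual}); for $i \le j$ the identification follows from $\PP(V)$-linearity and the primitive character of $\shA_{N-1}$. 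The right-orthogonality of $\act(\shA_{N-1} \boxtimes D(\PP(V)))$ to $\act(\shA'_k \boxtimes D(\PP(V)))$ then reduces, after Beilinson, to verifying $\Hom_\shA(A'(i), A(j)) = 0$ for $A' \in \shA'_k$, $A \in \shA_{N-1}$, and $0 \le i, j \le N-1$: the case $i = j$ uses $\shA'_k \subset \shA_{N-1}^\perp$, and the remaining cases follow from Lefschetz semi-orthogonality together with the primitivity established above.

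For part (3), $\shA' = \act(\shA_{N-1} \boxtimes D(\PP(V)))^\perp \cap \shA$ is automatically $\PP(V)$-linear and admissible. Each original Lefschetz component $\shA_k(k)$ admits a semi-orthogonal decomposition $\shA_k(k) = \langle \shA_{N-1}(k), \shA'_k(k)\rangle$ with $\shA'_k = \shA_k \cap \shA_{N-1}^\perp$, and the $\shA_{N-1}$-parts get absorbed into the removed subcategory, yielding
$$\shA' = \langle \shA'_0, \shA'_1(1), \ldots, \shA'_{N-2}(N-2)\rangle$$
of length at most $N-1 < N$, hence moderate. The \textbf{main obstacle} is part (1): turning the Beilinson/Koszul relation on $\PP(V)$ into a sharp contradiction in $\shA$ requires careful mutation arguments in the potentially noncommutative Lefschetz setting, in the style of Perry \cite[Cor. 6.19]{P18}.
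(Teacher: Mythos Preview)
Your approach to parts (1) and (3) is essentially the paper's: the Koszul/Euler relation on $\PP(V)$ forces $\shA_k = 0$ for $k \ge N$, and the splitting $\shA_k = \langle \shA'_k, \shA_{N-1}\rangle$ combined with (2) yields the moderate Lefschetz structure on $\shA'$.

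The genuine gap is in part (2). Your Beilinson reduction is correct in outline, but the case $i \le j$ is precisely where the content lies, and it does \emph{not} follow from ``$\PP(V)$-linearity and the primitive character of $\shA_{N-1}$''. What you need is the K\"unneth-type identity
\[
\Hom_\shA\big(A(i), A'(j)\big) \;\simeq\; \Hom_{\shA_{N-1}}(A,A') \otimes H^*\big(\PP(V), \sO(j-i)\big)
\]
for $A,A' \in \shA_{N-1}$ and $0 \le i \le j \le N-1$; nothing about $\PP(V)$-linearity or about $\shA_{N-1}$ being the smallest Lefschetz block gives this for free. (Knowing that a functor is fully faithful on each piece of a semiorthogonal decomposition and sends it to a semiorthogonal sequence is not enough to conclude full faithfulness on the whole category --- the forward Homs must match as well.)

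The paper handles this by a different and cleaner mechanism: it identifies the action functor with $\Gamma^*$ for the graph embedding $\Gamma \colon X \hookrightarrow X \times \PP(V)$, whose normal bundle is $\sO_X(-1) \boxtimes \Omega^1_{\PP(V)}(1)$. The Koszul resolution of $\Gamma_* \sO_X$ then exhibits the cone of
\[
\Hom_{\shA \boxtimes D(\PP(V))}(C_1 \boxtimes F_1,\, C_2 \boxtimes F_2) \longrightarrow \Hom_{\shA}\big(\Gamma^*(C_1 \boxtimes F_1),\, \Gamma^*(C_2 \boxtimes F_2)\big)
\]
as an iterated cone of $\Hom_{\shA}(C_1, C_2(-r)) \otimes \Hom_{\PP(V)}(F_1, F_2 \otimes \Omega^r(r))[-r]$ for $1 \le r \le N-1$. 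For $C_1, C_2 \in \shA_{N-1}$ every such term vanishes by Lefschetz semiorthogonality, giving full faithfulness at once; for $C_1 \in \shA_{N-1}$ and $C_2 \in \shA'_k$ the $r=0$ term vanishes as well, giving the orthogonality statement. This single computation replaces your entire case analysis, and your identification of part (1) rather than part (2) as the ``main obstacle'' inverts where the real work sits.
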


\begin{proof} For $(1)$ this is \cite[Cor. 6.19 (1)]{P18}; we present a proof here without using argument on empty sets. Assume $\shA \subset D(X)$ for a variety $X \to \PP(V)$, and consider the Koszul complex $K^\bullet$ on $X$ for the canonical section $s \in H^0(X, V^\vee \otimes \sO_X(1)) = \Hom(V^\vee, H^0(X, \sO_X(1)))$ corresponding to the linear system $V^\vee$. Since the linear system is base-point free, $K^\bullet$ is exact, hence $K^{\le -1} \simeq \sO_X$, therefore for all $C_1, C_2 \in \shA$, we have
	\begin{align*}
		\Hom(C_1,C_2) \simeq \Hom(C_1, C_2 \otimes K^{\le -1}).
	\end{align*}
Notice $K^{\bullet}$ is given by $K^r = 0$ for $r \ge 0$, and
	$K^{-r} = \wedge^{r}(V^\vee \otimes \sO_X(-1)) = \wedge^{r}V^\vee \otimes \sO_X(r)$ for  r $\ge 0.$
Therefore for any $C \in \shA_k$, $k \ge N$, $r \ge 1$,
	$$\Hom(C,C) \simeq \Hom(C,C \otimes K^{-r}) = \Hom(C, C(-r)) \otimes  \wedge^{r}V =0, $$
since $C(-r) \in \shA_k^{\perp}$. This implies $C=0$. Hence $\shA_k = 0$ for $k \ge N$.

For $(2)$, notice that the projectivization of the inclusion $\sO_{X}(-1) \hookrightarrow V \otimes \sO_X$, induced from the inclusion $\sO_{\PP(V)}(-1) \hookrightarrow V \otimes \sO_{\PP(V)}$ on $\PP(V)$,
induces a tautological inclusion $\Gamma \colon X \hookrightarrow X \times \PP(V)$ (which is also the graph morphism), with normal bundle $\sO_X(-1) \boxtimes \Omega^1_{\PP(V)}(1)$. The pull-back functor $\Gamma^*$, when restricted to $\shA \boxtimes D(\PP(V))$, is nothing but the action functor:	
	$$\Gamma^*|_{\shA \boxtimes D(\PP(V))} = \act  \colon \shA \boxtimes D(\PP(V)) \to \shA.$$

By Koszul resolution for the regular embedding $\Gamma$, for any $C_1, C_2 \in \shA$, $F_1, F_2 \in D(\PP(V))$, and adjunction $1 \to \Gamma_* \, \Gamma^*$, the cone of the natural map 
	$$\Hom_{\shA \boxtimes \PP(V)}(C_1\boxtimes F_1, C_2 \boxtimes F_2) \to \Hom_{\shA}(\Gamma^*(C_1\boxtimes F_1), \Gamma^*(C_2 \boxtimes F_2))$$
is an iterated cone of the $\Hom$ spaces:
	\begin{align*}
	&\Hom_{\shA \boxtimes \PP(V)}\big(C_1 \boxtimes F_1, C_2(-r) \boxtimes F_2 \otimes \Omega^r(r) \big)[-r]  \\
	= &  \Hom_{\shA}\big(C_1,C_2(-r) \big) \otimes \Hom\big(F_1, F_2 \otimes \Omega^r(r) \big)[-r], 	&1\le r \le N-1.
	\end{align*}
Notice if $r=0$ then above $\Hom$ space is nothing but $\Hom_{\shA \boxtimes \PP(V)}(C_1\boxtimes F_1, C_2 \boxtimes F_2)$.

For any $C_1, C_2 \in \shA_{N-1}$, $F_1, F_2 \in D(\PP(V))$ above $\Hom$ space vanishes for $1 \le r \le N-1$, since $\Hom(\shA_{N-1}(i), \shA_i) = 0$ for all $1 \le i \le N-1$ by definition of Lefschetz decomposition. Therefore $\Gamma^*$ is fully faithful on $\shA_{N-1} \boxtimes D(\PP(V))$. Furthermore, for any $C_1 \in \shA_{N-1}$, $C_2 \in \shA_k'$, $0 \le k \le N-2$, $F_1, F_2 \in D(\PP(V))$, the above $\Hom$ space vanishes for all $0 \le r \le N-1$, by the definition of $\shA_k'$. Therefore the semiorthogonal statement of $(2)$ follows.
	
For $(3)$, from 
	$\shA_k' = \langle \shA_k, \shA_{N-1} \rangle,$
and the semiorthogonal statement of $(2)$, the Lefschetz decomposition of 
	$\shA = \langle \shA_0, \shA_1(1), \ldots, \shA_{N-1}(N-1)\rangle$
 can be mutated into
	$$\shA = \langle \shA_0', \shA_1'(1), \ldots, \shA_{N-2}'(N-2), ~ \Gamma^*(\shA_{N-1} \boxtimes D(\PP(V)))\rangle,$$
therefore the right orthogonal of the $\PP(V)$-linear category $\Gamma^*(\shA_{N-1} \boxtimes D(\PP(V)))$ of $\shA$:
	$$\shA' = \langle \shA_0', \shA_1'(1), \ldots, \shA_{N-2}'(N-2) \rangle $$
is a $\PP(V)$-linear Lefschetz category, with Lefschetz components $\shA_0' \supset \shA_1' \supset \cdots \supset \shA_{N-2}'$.
\end{proof}

\subsection{Homological projective duality}\label{sec:HPD}

\begin{definition}\label{def:HPD}
Let $X$ be a variety with morphism $X \to \PP(V)$. The {\em universal hyperplane} $\shH_X  = \shH(X) / \PP(V)$ is defined to be the variety
	$$\shH_X =  \shH(X) / \PP(V) := X \times_{\PP V} Q \xrightarrow{\delta_{\shH}} X \times \PP(V^\vee).$$
If $\shA \subset D(X)$ is a $\PP(V)$-linear admissible subcategory, then the {\em universal hyperplane} $\shH_{\shA} = \shH(\shA)/\PP(V)$ is defined to be the admissible subcategory:
	$$\shH_{\shA} : = (\shA)_{\shH_X} = \shA \boxtimes_{\PP(V)} D(Q) \subset D(\shH_X),$$
where $(\shA)_{\shH_X}$ denotes the base-change of $\shA$ along $\shH_X \to \PP(V)$.
\end{definition}

If follows directly that $\delta_{\shH*} \colon D(\shH_X) \to D(X \times \PP(V^\vee))$ induces a functor $\delta_{\shH*} \colon \shH_\shA \to \shA \boxtimes D(\PP(V^\vee))$, with left adjoint given by the restriction of the functor $\delta_{\shH}^* \colon D(X \times \PP(V^\vee)) \to D(\shH_X)$ to subcategory $\shA \boxtimes D(\PP(V^\vee))$, still denoted by $\delta_{\shH}^* \colon  \shA \boxtimes D(\PP(V^\vee)) \to \shH_\shA$. Composed with projection $\pi \colon \shH_X \to X$, one sees that the functor $\pi_{*} \colon D(\shH_X) \to D(X)$ induces $\pi_{*} \colon \shH_{\shA} \to \shA$. It also follows that $\shH_{\langle \shA, \shB \rangle} = \langle \shH_{\shA}, \shH_{\shB}\rangle$. See \S \ref{sec:app:hyp} for more details.

\begin{definition} \label{def:HPDcat}
Let $\shA$ be a $\PP(V)$-linear Lefschetz subcategory with Lefschetz center $\shA_0$. Then the {\em HPD category $\shA^\hpd$} of $\shA$ over $\PP(V)$ is defined to be
	$$\shA^\hpd  = (\shA/\PP(V))^\hpd =  \{C \in \shH_{\shA}  \mid \delta_{\shH*} \,C \in \shA_0 \boxtimes D(\PP(V^\vee))\} \subset \shH_{\shA}.$$
If $\shA = D(X)$ for a variety $X \to \PP(V)$, and there exists a variety $Y$ with $Y \to \PP(V^\vee)$, and a Fourier-Mukai kernel $\shP \in D(Y \times_{\PP(V^\vee)} \shH_X)$ such that the $\PP(V^\vee)$-linear Fourier-Mukai functor $\Phi_{\shP}^{Y \to \shH_X} \colon D(Y) \to D(\shH)$ induces $D(Y) \simeq D(X)^\hpd$, then $Y$ is called the {\em HPD variety} of $X$.
\end{definition}


\begin{lemma}[{\cite{Kuz07HPD,JLX17,P18}}] \label{lem:sodH} There is a $\PP(V^\vee)$-linear semiorthogonal decomposition
	$$\shH_{\shA} = \big \langle \shA^\hpd, ~~\delta_{\shH}^* (\shA_1(1) \boxtimes D(\PP(V^\vee))), \ldots,  \delta_{\shH}^* (\shA_{m-1}(m-1) \boxtimes D(\PP(V^\vee))) \big \rangle.$$
\end{lemma}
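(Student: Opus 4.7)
The plan is to transfer the Lefschetz semiorthogonal decomposition of $\shA$ through the divisorial pullback $\delta_{\shH}^*$ and then identify the residual piece as $\shA^\hpd$ via adjunction. As a first step, I would apply Prop. \ref{prop:product} to the Lefschetz decomposition of $\shA$ and the trivial one-piece decomposition of $D(\PP(V^\vee))$ to obtain the $\PP(V^\vee)$-linear semiorthogonal decomposition
$$\shA \boxtimes D(\PP(V^\vee)) = \big\langle \shA_0 \boxtimes D(\PP(V^\vee)), \shA_1(1) \boxtimes D(\PP(V^\vee)), \ldots, \shA_{m-1}(m-1) \boxtimes D(\PP(V^\vee))\big\rangle$$
inside $D(X \times \PP(V^\vee))$.

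The key geometric input is that $\delta_{\shH}\colon \shH_X \hookrightarrow X \times \PP(V^\vee)$ is a divisor of class $\sO(1,1)$, so the short exact sequence $0 \to \sO(-1,-1) \to \sO \to \delta_{\shH*}\sO_{\shH_X} \to 0$ combined with the projection formula gives a functorial triangle $G(-1,-1) \to G \to \delta_{\shH*}\delta_{\shH}^* G$ in $D(X \times \PP(V^\vee))$, and by adjunction $\Hom(\delta_{\shH}^*F, \delta_{\shH}^*G)$ fits into a triangle involving $\Hom(F, G)$ and $\Hom(F, G(-1,-1))$. I would then use K\"unneth together with Lefschetz semi-orthogonality to verify in one stroke that $\delta_{\shH}^*$ is fully faithful on each $\shA_k(k) \boxtimes D(\PP(V^\vee))$ for $k \ge 1$ and that the images for different indices are semi-orthogonal in the stated order: for $F = A_1 \boxtimes B_1 \in \shA_{k_1}(k_1) \boxtimes D(\PP(V^\vee))$ and $G = A_2 \boxtimes B_2 \in \shA_{k_2}(k_2) \boxtimes D(\PP(V^\vee))$ with $k_1 \ge k_2 \ge 1$, the twist $A_2(-1)$ lies in $\shA_{k_2}(k_2-1) \subset \shA_{k_2 - 1}(k_2 - 1)$, which sits strictly earlier than $\shA_{k_1}(k_1)$ in the Lefschetz SOD; Lefschetz semi-orthogonality then forces $\Hom(A_1, A_2(-1)[*]) = 0$ and K\"unneth kills $\Hom(F, G(-1,-1)[*])$, and when $k_1 > k_2$ the analogous argument without the twist also gives $\Hom(F, G[*]) = 0$.

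Finally, I would identify the right orthogonal in $\shH_\shA$ of the subcategory $\mathcal{B} := \langle \delta_{\shH}^*(\shA_k(k) \boxtimes D(\PP(V^\vee)))\rangle_{k=1}^{m-1}$ with $\shA^\hpd$: by the adjunction $(\delta_{\shH}^*, \delta_{\shH*})$, an object $C \in \shH_\shA$ is right-orthogonal to $\mathcal{B}$ if and only if $\delta_{\shH*}C$ is right-orthogonal in $\shA \boxtimes D(\PP(V^\vee))$ to $\shA_k(k) \boxtimes D(\PP(V^\vee))$ for each $k \ge 1$, which by the first-step SOD is equivalent to $\delta_{\shH*}C \in \shA_0 \boxtimes D(\PP(V^\vee))$, matching precisely Def. \ref{def:HPDcat}; the SOD $\shH_\shA = \langle \mathcal{B}^\perp, \mathcal{B}\rangle = \langle \shA^\hpd, \mathcal{B}\rangle$ then follows automatically. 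The main point requiring care will be the admissibility of $\mathcal{B}$ inside $\shH_\shA$, which I expect to follow from the full faithfulness just established together with the admissibility of each source $\shA_k(k) \boxtimes D(\PP(V^\vee))$ in $\shA \boxtimes D(\PP(V^\vee))$, with adjoints built by composing $\delta_{\shH*}$ (which preserves base-change categories) with the appropriate Lefschetz projection.
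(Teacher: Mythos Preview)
The paper does not supply its own proof of this lemma; it merely cites \cite{Kuz07HPD,JLX17,P18}. Your proposal is correct and is precisely the standard argument given in those references: pull the Lefschetz decomposition of $\shA$ up to $\shA \boxtimes D(\PP(V^\vee))$, use the $\sO(1,1)$-divisor triangle for $\delta_\shH$ to check full faithfulness and semiorthogonality, and identify the residual piece via $(\delta_\shH^*,\delta_{\shH*})$-adjunction with the defining condition of $\shA^\hpd$.

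One minor comment on the admissibility step you flagged: since $\delta_\shH$ is a regular divisor inclusion, the functor $\delta_\shH^*$ admits both adjoints (the right adjoint is $\delta_{\shH*}$, and the left adjoint is $\delta_{\shH*}(-\otimes \sO(1,1))[-1]$), so each fully faithful image $\delta_\shH^*(\shA_k(k)\boxtimes D(\PP(V^\vee)))$ is automatically admissible in $\shH_\shA$; you do not need to build the adjoints by hand from Lefschetz projections.
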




The output of HPD theory due to Kuznetsov, especially the fundamental theorem of HPD for linear sections, is summarized as follows, see {\cite{Kuz07HPD,JLX17,P18} for more details. Denote $\gamma \colon \shA^\hpd \hookrightarrow \shH_{\shA}$ the inclusion functor, and $\gamma^*$ its left adjoint as usual.

\begin{theorem}[Kuznetsov] \label{thm:HPD}
\begin{enumerate}[leftmargin=*]
	\item (Decomposition of the dual) The HPD category $\shA^\hpd$ is a Lefschetz category with respect to $-\otimes \sO_{\PP(V^\vee)}(H')$ of length:
		$$n : = N - 1 - \min \{i \ge 0 \mid \shA_i = \shA_0\},$$
	 Lefschetz center $\shA^\hpd_0$ such that $\pi_* \, \gamma \colon \shA_0^\hpd \simeq \shA_0$ and $\gamma^* \, \pi^* \colon \shA_0 \simeq \shA_0^\hpd$, and Lefschetz components
		$$\shA^{\hpd}_j = \langle \gamma^* \pi^* \foa_{N+1-n}', \ldots, \gamma^* \pi^* \foa_{N-2+j}' \rangle, \qquad  -n+1 \le j  \le 0.$$
	Namely, there is a left Lefschetz decomposition of $\shA^\hpd$ with respect to $\otimes \sO_{\PP(V^\vee)}(H')$
		$$\shA^\hpd = \langle \shA^\hpd_{1-n} (1-n), \ldots, \shA_{-1}^\hpd(-1), \shA_0^\hpd \rangle$$
	\item (Duality) There is a $\PP(V)$-linear equivalence of Lefschetz categories $\shA \simeq (\shA^\hpd)^\hpd$.
	\item (Fundamental theorem of HPD) For a generic linear subbundle $L \subset V^\vee$ of rank $\ell$, there is a common triangulated category $\shC_L$ such that are semiorthogonal decompositions 
		\begin{align*}
		\shA_{\PP(L^\perp)} & = \langle \shC_L, ~~, \shA_{\ell}(1), \ldots, \shA_{m-1}(m-\ell) \rangle, \\
		\shA^\hpd_{\PP(L)} & = \langle \shA^\hpd_{1-n}(-n+\ell -N), \ldots, \shA^\hpd_{\ell-N}(-1), \shC_L\rangle.	
		\end{align*}	
\end{enumerate}
\end{theorem}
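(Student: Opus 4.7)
The statement is attributed to Kuznetsov, and the plan is to follow his original argument in \cite{Kuz07HPD} (with refinements in \cite{JLX17, P18}); the three parts correspond to three successive technical steps, all built on the universal-hyperplane semiorthogonal decomposition recorded in Lem.~\ref{lem:sodH}.

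For part (1), I would start from the decomposition of Lem.~\ref{lem:sodH} and first verify that the composition $\gamma^* \pi^* \colon \shA_0 \to \shA^\hpd$ is fully faithful with image the claimed center $\shA_0^\hpd$; this uses adjunction together with the Koszul resolution for the divisor $\shH_X \hookrightarrow X \times \PP(V^\vee)$, whose excess $\Hom$-spaces vanish exactly on $\shA_0$ thanks to the Lefschetz orthogonality relations (the same type of Koszul-resolution computation that appeared in the proof of Lem.~\ref{lem:non-moderate Lef}). The left Lefschetz decomposition of $\shA^\hpd$ is then extracted by applying $\gamma^* \pi^*$ to the primitive pieces $\foa_k'$ of Lem.~\ref{lem:sod:A_0}, suitably twisted by $\sO_{\PP(V^\vee)}(H')$; the collapse of the length to $n = N-1 - \min\{i : \shA_i = \shA_0\}$ is exactly the statement that the non-primitive twists produce nothing new after $\gamma^* \pi^*$.

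For (2), duality is proved by iterating the construction: one computes the double universal hyperplane $\shH(\shH_\shA/\PP(V^\vee))/\PP(V)$, then uses the mutations of part (1) applied to $\shA^\hpd$ to identify its HPD center canonically with $\shA_0$ inside $\shA$. For (3), I would base-change $\shA$ along $\PP(L^\perp) \hookrightarrow \PP(V)$ and $\shA^\hpd$ along $\PP(L) \hookrightarrow \PP(V^\vee)$ using Prop.~\ref{prop:bcsod}, with the genericity of $L$ ensuring that all the relevant squares are Tor-independent. This produces two explicit semiorthogonal decompositions in which the Lefschetz ``tails'' $\shA_\ell(1), \ldots, \shA_{m-1}(m-\ell)$ on one side and $\shA^\hpd_{1-n}(-n+\ell-N), \ldots, \shA^\hpd_{\ell-N}(-1)$ on the other appear automatically. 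One then \emph{defines} $\shC_L$ as the appropriate orthogonal complement on each side and identifies the two candidates via the HPD Fourier-Mukai kernel restricted to the incidence divisor $\shH_X \cap (X \times \PP(L))$ sitting over $\PP(L)$.

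The main obstacle is the final identification in part (3). Writing down a functor between the two candidate $\shC_L$'s is straightforward from the restricted HPD kernel, but proving it is an equivalence requires simultaneously verifying full faithfulness (by a $\Hom$-calculation that exploits the Lefschetz orthogonality along the full length of both $\shA$ and $\shA^\hpd$, not just at their centers) and essential surjectivity (by mutating away all non-$\shC_L$ components on both sides and checking that the remainders match term by term). Kuznetsov's argument handles this by an inductive mutation scheme in the relative codimension $N - \ell$, and any self-contained proof would reproduce essentially the same bookkeeping; since the theorem is cited here only as background for the present paper, I would simply refer to \cite[Thm.~6.3]{Kuz07HPD} and \cite{P18} rather than redo the induction.
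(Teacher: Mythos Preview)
Your proposal is in line with how the paper handles this statement: the theorem is quoted as background due to Kuznetsov and is not proved in the paper at all; the authors simply cite \cite{Kuz07HPD, JLX17, P18}, exactly as you conclude you would do. One small point worth noting: in the remark immediately following the statement, the authors record that in \cite{Kuz07HPD} and \cite{P18} part (1) is deduced \emph{after} part (3), whereas \cite{JLX17} gives a direct ``chess game'' proof of (1); your sketch for (1) reads as the direct approach, so if you want to attribute it correctly you should point to \cite{JLX17} rather than to Kuznetsov's original argument.
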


\begin{remark} In \cite{Kuz07HPD} or its noncommutative version \cite{P18}, the first statement (1) is proved after they have proved the fundamental theorem of HPD, and Kuznetsov posed the question in \cite{Kuz07HPD} that whether there is a direct proof of (1). This is solved in \cite{JLX17} where (1) is proved directly using ``chess game".
\end{remark}

\begin{remark} There is a way to reduce the burden of remembering the indices for the HPD categories, introduced in \cite{JLX17}: if we introduce the ``cohomological convention"  $\shB^{j} \simeq \langle \foa_{0}', \ldots, \foa_{j-1}' \rangle$, then $\shB^{j} \simeq \shA^{\hpd}_{j+1-N}$, and we have the following simple expressions
	$$\shA^{\hpd} = \big \langle \shB^{j} (j-(N-1)) \big \rangle_{1 \le j \le N-1} \quad \text{and} \quad \shA^{\hpd}_{\PP(L)} = \big \langle  \langle \shB^{j} (j-\ell)\rangle_{1 \le j \le \ell-1}, ~\shC_L \big \rangle.$$
\end{remark}

\section{Geometric operations on admissible subcategories} \label{sec:app:gem:cat}
In this section we discuss how basic geometric operations -- projective bundle, (generalized) universal hyperplane and blowing up, etc -- can be performed on categories. These results allow one to extend easily the constructions/arguments from commutative settings to noncommutative ones. The blowing-up formula Thm. \ref{thm:bl} seems to be absent in literatures even in the commutative case, namely Cor. \ref{cor:bl:lci} for possibly singular local complete intersection centers. As usual we stick to admissible subcategories of schemes, and the readers should have no difficulties to translate the content into $dg$-setting or (stable) $\infty$-setting. 

\subsection{Projective bundle} \label{sec:app:proj_bd} Let $S$ be a smooth $B$-scheme, $E$ be a vector bundle of rank $r$ on $S$ and $\pi \colon \PP_S(E) \to S$ be the projection. Let $X$ be a proper $S$-scheme, $i_{\shA} \colon \shA \hookrightarrow D(X)$ be an inclusion of $S$-linear admissible subcategory, then $\shA^{\perf} = \shA \cap \Perf(X) \subset \Perf(X)$ is $S$-linear admissible subcategory. Denote the base-change categories along $\PP_S(E) \to S$ by:
	$$\PP_\shA (E) = \shA_{\PP_S(E)} = \shA \boxtimes_{S} D(\PP_S(E)), \qquad \PP_{\shA}^{\perf}(E) = \shA^{\perf} \otimes_{\Perf S} \Perf(\PP_S(E)). $$
We will also denote $\PP_{\shA} = \PP_\shA (E)$ and $\PP_{\shA}^\perf = \PP_{\shA}^{\perf}(E)$ if there is no confusion. Therefore it follows directly from Prop. \ref{prop:product} that if there is $S$-linear semiorthogonal decomposition $D(X) = \langle \shA, \shB\rangle$ then $D(\PP_X(E)) = \langle \PP_\shA, \PP_\shB \rangle$, and $\Perf(\PP_X(E)) = \langle \PP_\shA^\perf, \PP_\shB^\perf \rangle$. Notice that from properties of base-change categories \S \ref{sec:bc} we have a commutative diagrams of $S$-linear functors
	$$
	\begin{tikzcd}
	\PP_\shA \ar[shift left]{d}{\pi_*} \ar[hook]{r}{i_{\PP_\shA}} & D(\PP_X(E))  \ar[shift left]{d}{\pi_*}\\
	\shA \ar[shift left]{u}{\pi^*} \ar[hook]{r}{i_\shA} & D(X) \ar[shift left]{u}{\pi^*}
	\end{tikzcd}
	$$
where $i_{\PP_{\shA}} \colon  \PP_\shA \hookrightarrow D(\PP_X(E))$ denotes the inclusion; similarly for the perfect complexes. Notice the adjoint functors $\pi^* \colon D(X) \rightleftarrows D(\PP_{X}(E)) \colon \pi_*$ induce adjoint functors 
	$\pi^* \colon \shA \rightleftarrows \PP_{\shA}(E) \colon \pi_*,$ still denoted by same notations, by abuse of notations. From Prop. \ref{prop:bcsod}, Orlov's result \cite{O92} translates into:

\begin{theorem}[Orlov's Projective bundle formula, \cite{O92}] \label{thm:app:pr_bd} The functors $\pi^*(-) \otimes \sO(k) \colon \shA \to \PP_{\shA}$ is fully faithful, $k \in \ZZ$, and  there is a $S$-linear semiorthogonal decomposition
	$$\PP_\shA = \langle \pi^*\shA, \pi^*\shA \otimes \sO(1), \ldots, \pi^*\shA \otimes \sO(r-1)\rangle,$$
where $\sO(k)$ denotes the pull-back of $\sO_{\PP_S(E)}(k)$. There is also a similar decomposition for the perfect complexes $\PP_\shA^\perf = \langle \pi^*\shA^\perf, \pi^*\shA^\perf \otimes \sO(1),\ldots, \pi^* \shA^\perf \otimes \sO(r-1)\rangle$.
\end{theorem}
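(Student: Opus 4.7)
The plan is to reduce to Orlov's classical projective bundle theorem for $D(\PP_X(E))$, and then extract the statement for $\shA$ by comparing against the two-block decomposition produced by Prop. \ref{prop:bcsod} along the flat base change $p \colon \PP_S(E) \to S$. The key point is that Orlov's classical result gives an $r$-fold decomposition of the whole of $D(\PP_X(E))$ into copies of $D(X)$, while Prop. \ref{prop:bcsod} applied to the $S$-linear decomposition $D(X) = \langle \shA^\perp, \shA\rangle$ gives $D(\PP_X(E)) = \langle \PP_{\shA^\perp}, \PP_\shA\rangle$; comparing these two SODs of the same category will pin down the desired decomposition of $\PP_\shA$.

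Concretely, Step 1 invokes Orlov to get $D(\PP_X(E)) = \langle \pi^*D(X), \pi^*D(X)(1), \ldots, \pi^*D(X)(r-1)\rangle$; since $\pi^*$ is fully faithful, pulling back $D(X) = \langle \shA^\perp, \shA\rangle$ refines each Orlov block into $\pi^*D(X)(k) = \langle \pi^*\shA^\perp(k), \pi^*\shA(k)\rangle$. Step 2 uses flat base change $\pi_*\sO(m) = q^*\Sym^m E^\vee$ for $m \ge 0$ (and $= 0$ for $-r < m < 0$), where $q \colon X \to S$, combined with the $S$-linearity of $\shA^\perp$, to verify the cross-block vanishings $\Hom(\pi^*\shA(l), \pi^*\shA^\perp(k)) = \Hom(\shA, \shA^\perp \otimes q^*\Sym^{k-l}E^\vee) = 0$ for all $0 \le k, l \le r-1$. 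Together with the automatic SOD vanishings inherited from Orlov, this permits a reordering into the two-block form
$$D(\PP_X(E)) = \langle \pi^*\shA^\perp, \ldots, \pi^*\shA^\perp(r-1),\ \pi^*\shA, \ldots, \pi^*\shA(r-1)\rangle.$$
Step 3 applies Prop. \ref{prop:bcsod} directly to $D(X) = \langle \shA^\perp, \shA\rangle$ along the flat (hence faithful) map $p$, yielding $D(\PP_X(E)) = \langle \PP_{\shA^\perp}, \PP_\shA\rangle$. Step 4 matches the two decompositions: since $\sO(k)$ is pulled back from $\PP_S(E)$ and $\PP_\shA$ is $\PP_S(E)$-linear, each $\pi^*\shA(k)$ lies in $\PP_\shA$ (and similarly for $\shA^\perp$); the elementary fact that $\shT = \langle B_1, B_2\rangle = \langle C_1, C_2\rangle$ with $B_i \subset C_i$ forces $B_i = C_i$ then yields the desired $\PP_\shA = \langle \pi^*\shA, \pi^*\shA(1), \ldots, \pi^*\shA(r-1)\rangle$. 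Full faithfulness of $\pi^*(-) \otimes \sO(k)$ on $\shA$ is inherited from the classical case, and the perfect-complex version follows by the same argument with the perf-counterparts of Orlov and of Prop. \ref{prop:bcsod}.

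The only non-routine verification is Step 2, the collection of Hom-vanishings needed for the reordering; these reduce via the projection formula and flat base change to $\Hom(\shA, \shA^\perp) = 0$, using $S$-linearity of $\shA^\perp$ to absorb the twist $q^*\Sym^{k-l}E^\vee$. I therefore do not foresee any substantial obstacle: the argument is essentially the combination of Orlov's classical decomposition refined by $\shA$, Prop. \ref{prop:bcsod} giving the coarse two-block decomposition containing $\PP_\shA$, and a matching lemma for two SODs of the same triangulated category.
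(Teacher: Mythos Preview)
Your argument is correct, but it takes a more roundabout path than the paper's. The paper obtains the theorem in one stroke: since $\PP_\shA = \shA \boxtimes_S D(\PP_S(E))$ by definition, Orlov's $S$-linear decomposition $D(\PP_S(E)) = \langle D(S), D(S)(1), \ldots, D(S)(r-1)\rangle$ can be tensored with $\shA$ via Prop.~\ref{prop:bcsod} (equivalently Prop.~\ref{prop:product} with $m=1$, $n=r$) to give immediately $\PP_\shA = \langle \shA \boxtimes_S D(S)(k)\rangle_{0 \le k \le r-1} = \langle \pi^*\shA(k)\rangle_{0 \le k \le r-1}$. No reordering or matching is needed because the base-change machinery is applied to the \emph{second} tensor factor.

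You instead apply the base-change machinery to the \emph{first} factor (decomposing $D(X) = \langle \shA^\perp, \shA\rangle$ along $\PP_S(E) \to S$), then invoke Orlov on $\PP_X(E)$ rather than on $\PP_S(E)$, and reconcile the two resulting SODs of $D(\PP_X(E))$ by an explicit reordering and a matching lemma. This is a perfectly valid alternative, and your Step~2 vanishings are correct (the projection formula and $\pi_*\sO(m) = q^*\Sym^m E^\vee$ for $m \ge 0$, $=0$ for $-r < m < 0$, together with $S$-linearity of $\shA^\perp$, give exactly what you need). The advantage of your route is that it is self-contained and makes the Hom-orthogonalities explicit; the advantage of the paper's route is that it avoids the reordering and the comparison of two SODs entirely, reducing the theorem to a single invocation of the tensor-product formula for semiorthogonal decompositions.
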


\subsection{Universal hyperplane} \label{sec:app:hyp} Apply the results of last section to $S= \PP(V)$ (where $V$ is a vector bundle over the fixed base scheme $B$ of rank $N$), and $E = \Omega_{\PP(V)}^1\otimes \sO_{\PP(V)}(1) \subset E' = V^\vee \otimes \sO_{\PP(V)}$, Then $\PP_S(E) = Q \subset  \PP(V) \times \PP(V^\vee)$  is nothing but the universal quadric. Now we assume $X$ is a {\em smooth} $S$-scheme and $\shA \subset D(X)$ be an admissible subcategory. Denote $\pi \colon \shH_X = \PP_X(E) \to X$ the projection and
	$\shH_\shA = \PP_\shA(E)$,
then $\shH_\shA$ is the universal hyperplane for $\shA$ (see definiiton \ref{def:HPD}). Then from properties of projective bundles, if there is a $\PP(V)$-linear semiorthogonal decomposition $D(X) = \langle \shA, \shB\rangle$, then $D(\shH_X) = \langle \shH_\shA, \shH_\shB\rangle$. And also there is a $\PP(V)$-linear semiorthogonal decomposition:
	$$\shH_\shA = \langle \pi^*\shA, \pi^* \shA\otimes \sO_{\PP(V^\vee)}(1), \ldots, \pi^* \shA\otimes \sO_{\PP(V^\vee)}(N-2)\rangle \subset D(\shH_X).$$
Denote $\delta \colon \shH_X \hookrightarrow X \times \PP(V^\vee)$ the inclusion and $h\colon \shH_X \to \PP(V^\vee)$ the projection. Then the adjoint functors $\delta^*\colon D(\shH_X) \rightleftarrows D(X) \boxtimes D(\PP(V^\vee)) \colon \delta_*$ and $\pi^* \colon D(X) \rightleftarrows D(\shH_X) \colon \pi_*$ induce adjoint functors on the corresponding subcategories; we still denote the functors by same notations by abuse of notations. Therefore we have a diagram of $\PP(V)$-linear functors:
	$$
	\begin{tikzcd}
	\shA \boxtimes  D( \PP(V^\vee)) \ar[shift left]{d}{\delta^*}  	\ar[hook]{r}{i_\shA \times \Id} 	&	D(X) \boxtimes D( \PP(V^\vee)) \ar[shift left]{d}{\delta^*}  \\
	\shH_\shA \ar[shift left]{d}{\pi_*} \ar[hook]{r}{i_{\PP_\shA}} 	\ar[shift left]{u}{\delta_*}	& 		D(\shH_X)  \ar[shift left]{d}{\pi_*} \ar[shift left]{u}{\delta_*}	\\
	\shA \ar[shift left]{u}{\pi^*} \ar[hook]{r}{i_\shA} & D(X) \ar[shift left]{u}{\pi^*}
	\end{tikzcd}.
	$$
Therefore by Lem. \ref{lem:bc-char}, $\shH_\shA \subset D(\shH_X)$ is also characterized by
	\begin{align*} \shH_\shA  &=  \{C \in  D(\shH_{X})  \mid \delta_{*} \,C \in \shA \boxtimes D(\PP(V^\vee))\} \subset D(\shH_X)	\\
	&=    \{C \in D(\shH_{X})  \mid \pi_* (C\otimes h^* F) \in \shA, \quad \forall F\in D(\PP(V^\vee)) \}  \subset D(\shH_X).
	\end{align*}
\subsubsection{HPD category}\label{sec:HPD:cat} From last section, it follows directly from the definition of HPD category $\shA^\hpd \subset \shH_\shA$ that $\shA^\hpd$ also admits the following descriptions as a subcategory of $D(\shH_X)$:
	 \begin{align*}
	 \shA^\hpd  &=  \{C \in  D(\shH_{X})  \mid \delta_{\shH*} \,C \in \shA_0 \boxtimes D(\PP(V^\vee))\}  \subset  D(\shH_{X}) \\
	 &=  \{C \in D(\shH_{X})  \mid \pi_* (C\otimes h^* F) \in \shA_0, \quad \forall F\in D(\PP(V^\vee)) \} \subset  D(\shH_{X}).
	 \end{align*}
	 
\subsection{Blowing up}\label{sec:bl} Let $S$ be a smooth $B$-scheme and $i \colon Z \hookrightarrow S$ be an inclusion of smooth codimension $r \ge 2$ local complete intersection subscheme. For simplicity assume in this subsection $B$ is a smooth scheme over a field of characteristic zero. Then the normal bundle $N_i$ of $Z\subset S$ is a vector bundle of rank $r$. Denote $\beta \colon \widetilde{S} = \Bl_Z S \to S$ the blowing up of $S$ along $Z$, $j \colon E_Z = \PP(N_i) \hookrightarrow  \widetilde{S}$ the inclusion of exceptional divisor, and $p \colon E_Z \to Z$ the projection. We have a commutative diagram:
	\begin{equation}\label{diag:Bl_Z S}
	\begin{tikzcd}[row sep= 2.6 em, column sep = 2.6 em]
	E_Z \ar{d}[swap]{p} \ar[hook]{r}{j} & \widetilde{S}\ar{d}{\beta} \\
	Z \ar[hook]{r}{i}         & S 
	\end{tikzcd}	
	\end{equation}
Now let $X$ be a smooth proper $S$-scheme, assume $X_Z := X \times_S Z$ is of expected dimension $\dim X - r$, therefore $X_Z \subset X$ is local complete intersection of codimension $r$. By abuse of notations, denote by $\beta \colon \widetilde{X} = \Bl_{X_Z} X \to X$ the blowing up of $X$ along $X_Z$ , by $j \colon E_{X_Z} = \PP_{X_Z}(N_i) \hookrightarrow \widetilde{X}$ the inclusion of exceptional divisor, and by $p\colon E_{X_Z} \to X_Z$ the projection. Then we have the following Tor-independent (fibered) squares:
	$$
	\begin{tikzcd}
	X_Z  \ar{d} \ar[hook]{r}{i}  & X \ar{d} \\
	Z 	\ar[hook]{r}{i} & S 
	\end{tikzcd} \qquad \text{and} \qquad
	\begin{tikzcd}
	X_Z  \ar{d} 	& E_{X_Z} \ar{l}[swap]{p}\ar[hook]{r}{j}	\ar{d} & \widetilde{X} \ar{r}{\beta}	\ar{d}	& X \ar{d}\\
	Z		& E_Z	 \ar{l}[swap]{p}\ar[hook]{r}{j}	& \widetilde{S}	\ar{r}{\beta}	& S 
	\end{tikzcd}
	$$
Note it is an important fact that in the above situation $X_Z \subset X$ is cut out locally by the same section as $Z \subset S$ and the normal bundle of $X_Z \subset X$ is just the pull-back of the normal bundle $N_i$ of $Z \subset S$. Otherwise the right most square of above diagram for the blowing-ups is only a commutative square, rather than a fibered square.
\begin{definition}\label{def:blcat}
Let $\shA \subset D(X)$ be a $S$-linear admissible subcategories, denote 
	$$\widetilde{\shA} = \shA \boxtimes_S D(\widetilde{S}) \subset D(\widetilde{X}) \quad \text{and} \quad \shA_Z = \shA \boxtimes_S D(Z) \subset D(X_Z)$$
Then $\widetilde{\shA}$ is called the {\em blowing up category of $\shA$ along $\shA_Z$}. 
\end{definition}
Similarly one can define the corresponding categories for perfect complexes:
	$$\widetilde{\shA}^\perf =  \shA \otimes_{\Perf(S)} \Perf(\widetilde{S})\subset \Perf(\widetilde{X}) \quad \text{and} \quad \shA_Z^\perf = \shA \otimes_{\Perf(S)} \Perf(Z) \subset \Perf(X_Z).$$
Then $\widetilde{\shA}^\perf $ is the {\em blowing up category of $\shA$ along $\shA_Z^\perf$}. 
 
From construction and Prop. \ref{prop:product} it follows directly that for any $S$-linear semiorthogonal decomposition $\shA = \langle \shA_1, \shA_2 \rangle$, we have $\widetilde{\shA} = \langle \widetilde{\shA_1}, \widetilde{\shA_2} \rangle$, and $\widetilde{\shA}^\perf = \langle \widetilde{\shA_1}^\perf, \widetilde{\shA_2}^\perf \rangle$. Notice that the projective bundle category $\PP_{\shA_Z}(N_i) \subset D(\PP_{X_Z}(N_i))$ plays the role exceptional divisors of the blowing-up, and is equipped with functors:
	$$p^* \colon \shA_Z \rightleftarrows \PP_{\shA_Z}(N_i) \colon p_*, \qquad j^* \colon \widetilde{\shA} \rightleftarrows   \PP_{\shA_Z}(N_i) \ \colon j_*,$$
induced from the corresponding functors on the derived categories of geometric spaces.

\begin{theorem}[Orlov's blowing up formula \cite{O92}] \label{thm:bl} In the above situation, the $S$-linear functors $\beta^*\colon \shA \to \widetilde{\shA}$ and $\Psi_k = j_* \, p^* (-) \otimes \sO_{\PP(N_i)}(k) \colon \shA_Z  \to  \widetilde{\shA}$ are fully faithful, $k \in \ZZ$. Denote the image of $\shA_Z$ under $\Psi_k$ by $(\shA_Z)_k$, then there are $S$-linear semiorthogonal decompositions
	\begin{align*} \widetilde{\shA}  & = \langle \beta^* \shA, ~ (\shA_Z)_0, (\shA_Z)_1, \ldots, (\shA_Z)_{r-2} \rangle 
				= \langle (\shA_Z)_{1-r}, \ldots, (\shA_Z)_{-2}, (\shA_Z)_{-1}, ~ \beta^* \shA\rangle. 			
	\end{align*}
Similar statements hold for the perfect complexes, in particular, there are $S$-linear semiorthogonal decompositions:
	\begin{align*} \widetilde{\shA}^\perf  & = \langle \beta^* \shA^\perf, ~ (\shA_Z^\perf)_0, \ldots, (\shA_Z^\perf)_{r-2} \rangle 
				= \langle (\shA_Z^\perf)_{1-r}, \ldots, (\shA_Z^\perf)_{-1}, ~ \beta^* \shA^\perf \rangle, 			
	\end{align*}
\end{theorem}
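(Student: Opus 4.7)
My plan is to deduce Theorem \ref{thm:bl} from Orlov's classical blow-up formula applied to the smooth pair $Z \hookrightarrow S$, propagated to $\widetilde{\shA}$ by the base-change and exterior-product machinery developed in \S \ref{sec:bc}. The starting point is the classical Orlov SOD for the smooth blow-up $\widetilde{S} = \Bl_Z S$:
\begin{align*}
D(\widetilde{S}) &= \langle \beta^* D(S),\ \Psi_0^S(D(Z)),\ \ldots,\ \Psi_{r-2}^S(D(Z))\rangle \\
&= \langle \Psi_{1-r}^S(D(Z)),\ \ldots,\ \Psi_{-1}^S(D(Z)),\ \beta^* D(S)\rangle,
\end{align*}
where $\Psi_k^S(F) := j_*(p^*F \otimes \sO_{E_Z}(k))$. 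This is an $S$-linear SOD, and the same holds on $\Perf(\widetilde{S})$. Both Fourier--Mukai-type functors have kernels supported on incidence subschemes of $Z \times \widetilde{S}$.

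The second ingredient is the geometric observation that, because $X_Z = X \times_S Z$ has the expected codimension $r$, the normal bundle of $X_Z \hookrightarrow X$ is the pullback of $N_{Z/S}$, and hence $\widetilde{X} = X \times_S \widetilde{S}$, $E_{X_Z} = X_Z \times_Z E_Z$, with all relevant fiber squares Tor-independent. In particular, the base-change $\beta\colon \widetilde{S} \to S$ is faithful for $X/S$ in the sense of Def. \ref{def:bc}, and similarly for the incidence squares defining the kernels of the $\Psi_k^S$. Thus Prop. \ref{prop:bcFM} is applicable.

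Now I apply Prop. \ref{prop:product} to $\widetilde{\shA} = \shA \boxtimes_S D(\widetilde{S})$, taking the trivial SOD on $\shA$ and Orlov's SOD on $D(\widetilde{S})$; the finite cohomological amplitude hypothesis is automatic since Orlov's projection functors are all of finite amplitude. This gives
$$\widetilde{\shA} = \langle \shA \boxtimes_S \beta^* D(S),\ \shA \boxtimes_S \Psi_0^S(D(Z)),\ \ldots,\ \shA \boxtimes_S \Psi_{r-2}^S(D(Z))\rangle.$$
It remains to identify the pieces: by Prop. \ref{prop:bcFM} applied to the $S$-linear FM embedding $\beta^*\colon D(S) \hookrightarrow D(\widetilde{S})$ and base-changed along $X \to S$, we obtain $\shA \boxtimes_S \beta^* D(S) = \beta^*\shA \subset D(\widetilde{X})$. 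Similarly, applying Prop. \ref{prop:bcFM} to the FM embedding $\Psi_k^S$ (with kernel $\sO_{E_Z}(k)$), base-changing yields the FM functor $\Psi_k = j_* p^*(-) \otimes \sO_{E_{X_Z}}(k)$ from $\shA_Z$ to $\widetilde{\shA}$, giving $\shA \boxtimes_S \Psi_k^S(D(Z)) = \Psi_k(\shA_Z) = (\shA_Z)_k$. The fully-faithfulness of $\beta^*\colon \shA \to \widetilde{\shA}$ and $\Psi_k\colon \shA_Z \to \widetilde{\shA}$ then follows from the corresponding statements for the ambient categories (Prop. \ref{prop:bcFM}). The dual SOD is obtained by starting from the dual Orlov SOD of $D(\widetilde{S})$, and the perfect-complex version is proved identically with $\otimes_{\Perf(S)}$ in place of $\boxtimes_S$.

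The main obstacle I anticipate is the careful verification of step (b) in the identification above, namely that the base-change of the FM functor $\Psi_k^S$ along $X \to S$ really yields the geometric functor $\Psi_k$ on $\shA_Z$: this requires Prop. \ref{prop:bcFM} to be applied with $X_1 = Z$, $Y = \widetilde{S}$, kernel $K_1 = \sO_{E_Z}(k)$, and base-change $X \to S$, and one must check that this base-change is faithful for the triple $(Z, \widetilde{S}, E_Z)$; this again reduces to the Tor-independence consequences of the expected-codimension assumption on $X_Z$. Once this compatibility is in place, everything else is bookkeeping using Prop. \ref{prop:product} and the associativity/base-change identities of \S \ref{sec:bc}.
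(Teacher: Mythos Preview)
Your proposal is correct and follows essentially the same route as the paper: start from Orlov's $S$-linear semiorthogonal decomposition of $D(\widetilde{S})$ for the smooth blow-up $\widetilde{S}=\Bl_Z S$, then transport it to $\widetilde{\shA}=\shA\boxtimes_S D(\widetilde{S})$ via the base-change machinery of \S\ref{sec:bc}. The only cosmetic difference is that the paper first base-changes along $X\to S$ using Prop.~\ref{prop:bcFM} to obtain the decomposition of $D(\widetilde{X})$ and then passes to $\shA$, whereas you go directly to $\widetilde{\shA}$ via Prop.~\ref{prop:product} and then invoke Prop.~\ref{prop:bcFM} to identify the pieces $\shA\boxtimes_S\Psi_k^S(D(Z))$ with $\Psi_k(\shA_Z)$; the Tor-independence checks you flag (faithfulness for $S$, $\widetilde{S}$, $Z$, and $E_Z=Z\times_S\widetilde{S}$) are exactly those the paper records.
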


\begin{proof} The blowing up formula for {\em smooth} centers of of Orlov's \cite{O92} holds for $\widetilde{S} \to S$, since $Z \subset S$ is smooth, namely there are $S$-linear semiorthogonal decompositions:
	\begin{align*} D(\widetilde{S})	& = \langle \beta^* D(S), ~ j_*\, p^*\, D(Z), j_* (p^* \, D(Z) \otimes \sO(1)), \ldots, j_* (p^* D(Z) \otimes \sO(r-2)) \rangle;\\
				& = \langle j_* (p^*\,D(Z) \otimes \sO(1-r)), \ldots, j_* (p^*\,D(Z) \otimes \sO(-1)), ~ \beta^* D(S) \rangle,
	\end{align*}
Then we apply base-change along $X \to S$, by Prop. \ref{prop:bcFM} and notice $X \to S$ are fully faithful for the pairs $(S,\widetilde{S})$ and $(Z, \widetilde{S})$ (i.e. for the $S$-schemes $S$, $\widetilde{S}$, $Z$ and $Z \times_{S} \widetilde{S} = E_Z$) by the above Tor-independent squares, we obtain $X$-linear semiorthogonal decompositions
	\begin{align*} D(\widetilde{X})	& = \langle \beta^* D(X), ~ j_*\, p^*\, D(X_Z), j_* (p^* \, D(X_Z) \otimes \sO(1)), \ldots, j_* (p^* D(X_Z) \otimes \sO(r-2)) \rangle;\\
				& = \langle j_* (p^*\,D(X_Z) \otimes \sO(1-r)), \ldots, j_* (p^*\,D(X_Z) \otimes \sO(-1)), ~ \beta^* D(X) \rangle,
	\end{align*}
Notice for the $S$-linear subcategory $\shA$, we have the following diagrams of $S$-linear functors between $S$-linear categories:
	\begin{equation*}
	\begin{tikzcd}[row sep= 2.6 em, column sep = 2.6 em]
	 \PP_{\shA_Z}(N_i) \ar[shift left]{d}{p_*} \ar[shift left]{r}{j_*} & \widetilde{\shA} \ar[shift left]{d}{\beta_*}   \ar[shift left]{l}{j^*}    \\
	\shA_Z  \ar[shift left]{u}{p^*}  \ar[shift left]{r}{i_*}      & \shA  \ar[shift left]{l}{i^*} \ar[shift left]{u}{\beta^*}    
	\end{tikzcd}	
	\end{equation*}
which is a commutative diagram for all the push-forwards and a commutative diagram for all the pull-backs. All the functors $i_*,i^*, j_*,j^*, p_*, p^*, \beta_*, \beta^*$ in the diagram for $\shA$ are induced (and compatible with) the corresponding functors for $X$ restricted to subcategories $\shA \subset D(X)$, $\shA_Z \subset D(X_Z)$, $\PP_{\shA}(N_i) \subset D(E_{X_Z}) = D(\PP_{X_Z}(N_i))$, $\widetilde{\shA}  \subset D(\widetilde{X} )$, and for abuse of notations we still use the same notation. Now the theorem follows from applying Prop. \ref{prop:bcsod} to the $S$-linear category $\shA \subset D(X)$ for the fully faithful base-change to $X \to S$. The results for perfect complexes are similar.
\end{proof}

Even in the case of schemes, the above theorem generalizes Orlov's blowing-up formula to blowing up $\widetilde{X} \to X$ along possibly non-smooth center $Y \subset X$:

\begin{corollary} \label{cor:bl:lci} Suppose $Y$ is a codimension $r \ge 2$ local complete intersection subscheme of a smooth scheme $X$ over a field of characteristic zero, denote $\beta \colon \Bl_Y X \to X$ the blowing up of $X$ along $Y$, and $E$ the exceptional divisor. Then the (derived) functors $\beta^*\colon D(X)\to D(\Bl_Y X)$ and $\Psi_k = j_* \, p^* (-) \otimes \sO(-kE) \colon D(Y) \to D(\Bl_Y X)$ are fully faithful, $k \in \ZZ$. Denote the image of $\Psi_k$ by $D(Y)_k$. Then there are $X$-linear semiorthogonal decompositions
	\begin{align*} D(\Bl_Y X)	& = \langle \beta^* D(X), ~ D(Y)_0, D(Y)_1, \ldots, D(Y)_{r-2} \rangle;\\
				& = \langle D(Y)_{1-r}, \ldots, D(Y)_{-2}, D(Y)_{-1}, ~ \beta^* D(X) \rangle.
	\end{align*}
Similarly for the categories of perfect complexes.
\end{corollary}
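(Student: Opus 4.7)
The plan is to reduce the corollary to Theorem \ref{thm:bl} via a local-to-global argument. Since $Y \subset X$ is a codimension $r$ local complete intersection, I can cover $X$ by Zariski opens $\{U_i\}$ on each of which $Y \cap U_i$ is cut out by a regular sequence $(f_1^{(i)},\ldots,f_r^{(i)})$. Assembling this sequence into a morphism $g_i \colon U_i \to \AA^r$ gives $Y \cap U_i = g_i^{-1}(0) = U_i \times_{\AA^r} \{0\}$ of expected codimension $r$. Taking $S = \AA^r$ and $Z = \{0\}$, both smooth, the hypotheses of Theorem \ref{thm:bl} are satisfied locally, and I obtain on each $\beta^{-1}(U_i) = \Bl_{Y \cap U_i}(U_i)$ the claimed semiorthogonal decomposition with the restrictions of $\beta^*$ and $\Psi_k$.

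To globalize, I would use that $\beta^*$ and $\Psi_k$ are Fourier-Mukai functors with globally defined kernels on $\Bl_Y X$, independent of any local chart. Fully faithfulness is the statement that a unit (or counit) map of Fourier-Mukai functors is an isomorphism, a property checkable Zariski-locally on the source; semiorthogonality $\Hom(-,-) = 0$ translates via adjunction into the vanishing of a derived pushforward of a fixed complex on $\Bl_Y X$, again a local condition. Both follow immediately from Theorem \ref{thm:bl} applied on each $U_i$. For generation, I would verify that any $C \in D(\Bl_Y X)$ in the right orthogonal of the listed components restricts to zero on every $\beta^{-1}(U_i)$ by the local statement, hence $C = 0$ since a bounded coherent complex is determined by its restrictions to a Zariski cover. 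The dual decomposition and the perfect-complex version follow by applying the same argument to their counterparts in Theorem \ref{thm:bl}.

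The hard part will be the rigorous local-to-global passage for the generation statement: whereas full faithfulness and semiorthogonality are transparently local, generation is a priori a global condition on the category. The key observation that makes the argument work is that $\beta^* D(X)$ and each $(D(Y))_k$ are defined globally via fixed Fourier-Mukai kernels, not constructed by patching, so their restrictions to $\beta^{-1}(U_i)$ coincide with the local components supplied by Theorem \ref{thm:bl}; the assembly of compatible local SODs into a global SOD is then automatic.
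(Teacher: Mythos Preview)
Your proposal is correct and follows essentially the same approach as the paper: both pass to Zariski opens where $Y$ is cut out by a regular sequence, realize this as a fiber product over affine space with a smooth center (the paper uses $S=\AA^n$, $Z=\AA^{n-r}$ with $n>r$, you use the minimal $S=\AA^r$, $Z=\{0\}$, which works equally well), and invoke Theorem~\ref{thm:bl}. Your explicit local-to-global discussion of full faithfulness, semiorthogonality, and generation simply spells out what the paper compresses into the single phrase ``by passing to smaller open subschemes, we may assume\ldots''.
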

\begin{proof} By passing to smaller open subschemes, we may assume $Y$ is locally cut out by a regular sequence $f_1, \ldots, f_r \in \Gamma(\sO_X)$. Therefore locally there is a morphism $X \to S = \AA^n = \Spec \ZZ[z_1, \ldots, z_n]$, $n > r$, and $Z= \AA^{n-r} \subset \AA^n$ is the subscheme $z_1 = \cdots = z_r = 0$, such that $Y = X \times_S Z$. Apply previous theorem to $\shA = D(X)$, $\shA_Z = D(Y)$.
\end{proof}

\begin{remark} \label{rmk:thm:bl:nreg} Note that the construction of this section, the proof Thm. \ref{thm:bl} (therefore Cor. \ref{cor:bl:lci}) works in a more general situation without smoothness assumption on $X$. Assume $S \to \overline{S}$ is a morphism of schemes such that $Z \subset S$ is the zero locus of a regular section of the pulling back of a vector bundle $\sE$ on $\overline{S}$, therefore the blowing up diagram (\ref{diag:Bl_Z S}) is $\overline{S}$-linear. Let $\overline{X} \to \overline{S}$ be a morphism of schemes such that $\overline{X}$ is smooth and the fiber square for $X : = \overline{X} \times_{\overline{S}} S$ is Tor-independent, and assume further that $X_Z : = Z \times_S X \subset X$ is also cut out by the pulling-back of the same section of the vector bundle $\sE$. Then $\widetilde{X} = X \times_S \widetilde{S}$ is the blowing up of $X$ along $X_Z$, and the corresponding blowing up diagram is $\overline{X}$-linear. Let $\overline{\shA} \subset D(\overline{X})$ be a $\overline{S}$-linear admissible subcategory, then $\shA: = \overline{\shA}  \boxtimes_{\overline{S}} D(S)$ is a $S$-linear admissible subcategory of $D(X)$ (with projection functor of finite homological amplitude). Then $\widetilde{\shA} = \shA \boxtimes_{D(S)} D(\widetilde{S}) =  \overline{\shA}  \boxtimes_{\overline{S}} D(\overline{X})$ is the {\em blowing up category} of $\shA$ along $\shA_Z = \shA \boxtimes_{D(S)} D(Z)$, and Thm. \ref{thm:bl} holds for $\widetilde{\shA}$ by applying fully faithful base-change $\overline{X} \to \overline{S}$.
\end{remark}

The following will be useful later.
\begin{lemma} \label{lem:mut:bl} In the situation of Thm. \ref{thm:bl}, the right mutation functor
satisfies the following:
	$$\RR_{(\shA_Z)_k} (\beta^* A \otimes \sO_{\widetilde{X}}(-kE)) = \beta^* A \otimes  \sO_{\widetilde{X}}(-(k+1)E), \qquad A \in \shA, \,k \in \ZZ.$$
\end{lemma}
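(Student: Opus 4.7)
The plan is to construct a natural distinguished triangle out of the standard divisor sequence on $\widetilde{X}$ and then identify it as the right mutation triangle for $\RR_{(\shA_Z)_k}$. Fix $A \in \shA$ and $k \in \ZZ$.

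First I would start from the short exact sequence $0 \to \sO_{\widetilde{X}}(-E) \to \sO_{\widetilde{X}} \to j_*\sO_E \to 0$, twist by $\sO_{\widetilde{X}}(-kE)$, and use $j^*\sO_{\widetilde{X}}(-kE) = \sO_E(k)$ together with the projection formula to get the triangle $\sO_{\widetilde{X}}(-(k+1)E) \to \sO_{\widetilde{X}}(-kE) \to j_*\sO_E(k)$. Tensoring by $\beta^*A$ and invoking flat base-change $j^*\beta^* = p^*i^*$ (available since the blow-up square is Tor-independent) would then produce in $\widetilde{\shA}$ the distinguished triangle
\[
\beta^*A \otimes \sO_{\widetilde{X}}(-(k+1)E) \longrightarrow \beta^*A \otimes \sO_{\widetilde{X}}(-kE) \longrightarrow \Psi_k(i^*A),
\]
using $\beta^*A \otimes j_*\sO_E(k) = j_*(p^*i^*A \otimes \sO_E(k)) = \Psi_k(i^*A)$. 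Note that $i^*A \in \shA_Z$ by $S$-linearity of $\shA$ (cf.\ Def.~\ref{def:bc-A}), so the third term automatically lies in $(\shA_Z)_k$.

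It would then remain to show that $\beta^*A \otimes \sO_{\widetilde{X}}(-(k+1)E) \in {}^\perp(\shA_Z)_k$ inside $\widetilde{\shA}$, for once this is established, the uniqueness of the decomposition triangle associated to $\widetilde{\shA} = \langle {}^\perp(\shA_Z)_k,\,(\shA_Z)_k\rangle$ identifies the triangle above with the mutation triangle and finishes the proof. For this I would observe that $\sO_{\widetilde{X}}(-(k+1)E)$ is the pullback of $\sO_{\widetilde{S}}(-(k+1)E_Z)$ from $\widetilde{S}$, so tensoring by it is an autoequivalence of the $\widetilde{S}$-linear category $\widetilde{\shA}$; and from $\Psi_j = j_*p^*(-) \otimes \sO_{\widetilde{X}}(-jE)$ one reads off $(\shA_Z)_j \otimes \sO_{\widetilde{X}}(-(k+1)E) = (\shA_Z)_{j+k+1}$. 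Twisting the second semiorthogonal decomposition of Thm.~\ref{thm:bl} by this line bundle would then give
\[
\widetilde{\shA} = \big\langle (\shA_Z)_{k+2-r}, \ldots, (\shA_Z)_k,\; \beta^*\shA \otimes \sO_{\widetilde{X}}(-(k+1)E)\big\rangle,
\]
so the rightmost component is right-orthogonal to $(\shA_Z)_k$, as required. The only delicate point is precisely this twisted decomposition: one must check that tensoring by a line bundle pulled back from $\widetilde{S}$ is genuinely an autoequivalence of $\widetilde{\shA}$ and shifts the index of $(\shA_Z)_j$ by $k+1$, both of which fall out of the $\widetilde{S}$-linear construction of $\widetilde{\shA}$ in Def.~\ref{def:blcat} and the defining formula for $\Psi_j$.
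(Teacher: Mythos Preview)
Your proof is correct and takes a more direct route than the paper. Both arguments ultimately rest on the divisor triangle $\sO_{\widetilde{X}}(-E)\to\sO_{\widetilde{X}}\to j_*\sO_E$, but the paper first reduces to $k=-1$ via the autoequivalence $\otimes\,\sO(kE)$ and then runs an octahedral-axiom comparison between $\RR_{j_*\shB}$ and $j_*\,\RR_{\shB}\,j^*$ (with $\shB=p^*\shA_Z\otimes\sO_E(-1)$) to force the mutation to be $\beta^*A$. You instead build the candidate mutation triangle explicitly for arbitrary $k$ and then verify the left-orthogonality $\beta^*A\otimes\sO(-(k+1)E)\in{}^\perp(\shA_Z)_k$ by twisting the second decomposition of Thm.~\ref{thm:bl}. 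Your approach is shorter and avoids the octahedral diagram; the paper's is more functorial and yields a general relation between $\RR_{j_*\shB}$ and $j_*\RR_\shB j^*$ as a by-product.

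Two cosmetic points: the identity $j^*\beta^*=p^*i^*$ needs no base-change argument---it is just functoriality of pullback along the commutative (non-Cartesian) blow-up square; and the semiorthogonal decomposition you invoke should be written $\widetilde{\shA}=\langle (\shA_Z)_k,\,{}^\perp(\shA_Z)_k\rangle$ (your order is reversed), with the rightmost component being \emph{left} orthogonal to $(\shA_Z)_k$. Neither affects the argument.
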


\begin{proof} The similar statement for left mutations is proved by Carocci-Turcinovic {\cite[Prop. 3.4]{CT15}}. 
For the sake of completeness, we include a proof here. For simplicity of notations we denote exceptional divisor by $E$, and denote $\sO(kE) = \sO_{\widetilde{X}}(kE)$, for $k \in \ZZ$. Note that $\sO(kE)|_{E} = \sO_{\PP(N_i)}(-k)$. Since the mutation functor satisfies $\sigma \circ \RR_{\shB} = \RR_{\sigma(\shB)} \circ \sigma$ for any admissible subcategory $\shB \subset \shT$ and any autoequivalence $\sigma: \shT \to \shT$, therefore we only need to show the case for $k=-1$, i.e. to show 
	$$\RR_{(\shA_Z)_{-1}} (\beta^* A \otimes \sO(E)) = \beta^* A, \qquad  \forall ~A \in \shA,$$ 
then other cases will follow from applying $\sigma \circ \RR_{\shB} = \RR_{\sigma(\shB)} \circ \sigma$ to the autoequivalence $\sigma = \otimes \sO(kE)$. Denote $\shB =p^* (\shA_Z) \otimes \sO_E(-1) \subset \PP_{\shA}(N_i)$, then $j_* \shB =  (\shA_Z) _{-1}$. Denote $i_{j_* \shB}$ the inclusion $j_* \shB  \subset  \widetilde{X}$. We need to compare $\RR_{j_* \shB}$ with $j_* \, \RR_{\shB}\, j^*$. For any $A \in \shA$, consider the commutative diagram:
	\begin{equation} \label{eqn:oct}
	\begin{tikzcd}
	 j_* \,  (i_{\shB}\, i_{\shB}^*)\, j^*  & i_{j_* \shB}\, i_{j_* \shB}^*  \ar{l}[swap]{\sim} & 0 \ar{l}   \\
	 j_* j^*   \ar{u} & \id \ar{l}  \ar{u} & \otimes \sO(-E) \ar[equal]{d} \ar{u}  \ar{l} \\
	 j_* \, \RR_{\shB}\, j^*  \ar{u} & \RR_{j_* \shB} \ar[dashed]{l} \ar{u} & \otimes  \sO(-E) \ar[dashed]{l}.
	\end{tikzcd}
	\end{equation}
From octahedral axiom the last row is an exact triangle. Since $j^* (\beta^*A \otimes \sO(E)) = p^*A \otimes \sO_E(-1) \in \shB$, therefore $j_* \RR_{\shB} j^* (\beta^*A \otimes \sO(E)) = j_* \RR_{\shB} (p^*A \otimes \sO_E(-1)) = 0$. Hence $\RR_{j_* \, \shB} (\beta^*A \otimes \sO(E))=  (\beta^*A \otimes \sO(E))\otimes \sO(-E) = \beta^*A$.\end{proof}

\begin{remark} Similarly one can prove 	
	$$  \LL_{(\shA_Z)_k} (\beta^* A  \otimes \sO_{\widetilde{X}}(-(k+1)E)) = \beta^* A \otimes \sO_{\widetilde{X}}(-kE), \qquad A \in \shA, \,k \in \ZZ.
	$$
\end{remark}

\subsection{Generalized universal hyperplane} \label{sec:hyp} Let $S$ be a smooth $B$-scheme, $i \colon Z \hookrightarrow S$ be a smooth subscheme. We further assume that $Z = Z(s)$ is the zero locus of a regular section $s \in \Gamma(S,E)$ of a vector bundle $E$ of rank $r$. Recall the generalized universal hyperplane $\shH_s \subset \PP_S(E^\vee)$ is the hypersurface cut out by the section of $\sO_{\PP_S(E^\vee)}(1)$ under the identification 
	$$\Gamma(\PP_S(E^\vee), \sO_{\PP(E^\vee)}(1)) = \Gamma(S, E).$$
Denote $\pi \colon \shH_s \to S$ the projection, then $\pi$ is a $\PP^{r-2}$-bundle  over $S\, \backslash \,Z$, and $\shH_s|_{\pi^{-1}(Z)} = \PP_Z(E^\vee|_Z) = \PP_Z(N_i^\vee)$, where $N_i$ is the normal bundle of $Z \subset S$ as usual. 
Therefore we have a commutative diagram:
	\begin{equation*}
	\begin{tikzcd}[row sep= 2.6 em, column sep = 2.6 em]
	\PP_Z(N_i^\vee) \ar{d}[swap]{\rho} \ar[hook]{r}{j} & \shH_s \ar{d}{\pi} \ar[hook]{r}{\iota} & \PP_S(E) \ar{ld}[near start]{q} 
	\\
	Z \ar[hook]{r}{i}         & S
	\end{tikzcd}	
	\end{equation*}

Let $a_X \colon X \to S $ be a smooth proper $S$-scheme such that $X_Z := X \times_S Z$ is of expected dimension $\dim X -r$. Then $X_Z$ is also cut out by the section $a_X^{*}\,s \in H^0(X, a_X^*E)$. Therefore we can similarly form the generalized universal hyperplane $\shH_{X,s} \subset \PP_X(E)$ for $X$ with respect to the bundle $a_X^*E$ and section $a_X^*\,s$. By abuse of notation we will denote the bundle $a_X^* E$ and section $a_X^*\,s$ on $X$ still by $E$ and $s$ respectively, and denote the maps by same notations, i.e. we denote the inclusions by $i\colon X_Z \hookrightarrow X$ and $j \colon \PP_{X_Z}(N_i^\vee) \hookrightarrow \shH_{X,s}$, and the projections by $\rho\colon \PP_{X_Z}(N_i^\vee) \to X_Z$ and $\pi\colon \shH_{X,s} \to X$.


\begin{definition} Assume $\shA \subset D(X)$ is an admissible $S$-linear subcategory, then denote 
	$$\shH_{\shA,s} = \shA \boxtimes_S D(\shH_s ) \subset D(\shH_{X,s}), \qquad \shH_{\shA,s}^\perf =  \shA \otimes_{\Perf(S)} \Perf(\shH_s) \subset \Perf(\shH_{X,s}).$$
Then $\shH_{\shA,s}$ (resp. $\shH_{\shA,s}^\perf$) is called the {\em generalized universal hyperplane} for $\shA$ with respect to vector bundle $E$ and regular section $s$.
\end{definition}

As blowing up case, various adjoint functors on derived categories of schemes induce adjoint functors on corresponding subcategories, and we have the following diagrams of $S$-linear functors:
\begin{equation*}
	\begin{tikzcd}[row sep= 3 em, column sep = 3 em]
	 \PP_{\shA_Z}(N_i^\vee) \ar[shift left]{d}{\rho_*} \ar[shift left]{r}{j_*} & \shH_{\shA,s}  \ar[shift left]{d}{\pi_*}   \ar[shift left]{l}{j^*}  \ar[shift left]{r}{\iota_*} & \PP_\shA (E^\vee) \ar[shift left]{ld}{q_*}   \ar[shift left]{l}{\iota^*} 
	\\
	\shA_Z  \ar[shift left]{u}{\rho^*}  \ar[shift left]{r}{i_*}      & \shA  \ar[shift left]{l}{i^*} \ar[shift left]{u}{\pi^*}   \ar[shift left]{ru}{q^*}
	\end{tikzcd}	
\end{equation*}
which are commutative for all push-forwards and respectively for all pull-backs. All these functors are induced by and compatible with the corresponding functors for the ambient schemes under the inclusions $\shA \subset D(X)$, $\shA_Z \subset D(X_Z)$, $\PP_{\shA_Z}(N_i^\vee) \subset D(\PP_{X_Z}(N_i^\vee))$, $\PP_{\shA}(E^\vee) \subset D(\PP_{X}(E^\vee))$ and $\shH_{\shA,s}  \subset D(\shH_{X,s})$, as usual. Similarly for the categories of perfect complexes.

\begin{theorem}[Orlov's generalized hyperplane theorem, {\cite[Prop. 2.10]{O}}] \label{thm:hyp} In the above situation, the functors $j_*\,\rho^* \colon \shA_Z  \to \shH_{\shA,s}$ and $ \pi^*(-) \otimes \sO_{\PP(E^\vee)}(k) \colon \shA  \to \shH_{\shA,s} $ are fully faithful, $k \in \ZZ$, and there is $S$-linear semiorthogonal decompositions:
	\begin{align*}
	\shH_{\shA,s} & = \langle  j_* \, \rho^* \shA_Z,  ~~\pi^* \shA \otimes \sO_{\PP(E^\vee)}(1), \ldots , \pi^* \shA \otimes  \sO_{\PP(E^\vee)}(r-1)\rangle \\
	& = \langle \pi^* \shA \otimes \sO_{\PP(E^\vee)}(2-r), \ldots,  \pi^* \shA \otimes \sO_{\PP(E^\vee)}, ~~  j_* \, \rho^* \shA_Z \rangle,
	\end{align*}
and similar decompositions for perfect complexes.
\end{theorem}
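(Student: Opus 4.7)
The plan is to mimic the three-step strategy from the proof of Theorem \ref{thm:bl}, replacing Orlov's blowing-up formula by his generalized hyperplane formula \cite[Prop. 2.10]{O}. First, apply the classical theorem to the base $S$ to obtain two $S$-linear semiorthogonal decompositions of $D(\shH_s)$ of the claimed shape, with $D(Z)$ and $D(S)$ in place of $\shA_Z$ and $\shA$. Second, base-change these decompositions along $a_X \colon X \to S$ to $X$-linear decompositions of $D(\shH_{X,s})$. Third, restrict to the $S$-linear subcategory $\shA \subset D(X)$.

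Step 2 is essentially immediate: since $a_X \colon X \to S$ is a smooth morphism, it is flat, hence the fiber squares $X \times_S \shH_s = \shH_{X,s}$, $X \times_S \PP_Z(N_i^\vee) = \PP_{X_Z}(N_i^\vee)$, and $X \times_S Z = X_Z$ are all Tor-independent. Therefore $a_X$ is faithful for every Fourier-Mukai pair appearing in Orlov's decomposition of $D(\shH_s)$, and Proposition \ref{prop:bcFM} yields an $X$-linear semiorthogonal decomposition
$$D(\shH_{X,s}) = \langle j_* \rho^* D(X_Z),\ \pi^* D(X) \otimes \sO_{\PP(E^\vee)}(1), \ldots, \pi^* D(X) \otimes \sO_{\PP(E^\vee)}(r-1) \rangle,$$
together with its left-oriented analogue. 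Here the kernels are pulled back from their counterparts over $S$, so the generating functors on the right-hand side are precisely $j_* \rho^*$ and $\pi^*(-) \otimes \sO(k)$ on $X$.

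For Step 3, apply Proposition \ref{prop:bcsod} to the $S$-linear admissible subcategory $\shA \subset D(X)$, viewing $\shH_s \to S$ as the faithful base-change (again faithful by flatness of $a_X$). The decomposition of $D(\shH_{X,s})$ then restricts to a decomposition of $\shH_{\shA,s} = \shA \boxtimes_S D(\shH_s)$ whose components are exactly $\pi^* \shA \otimes \sO_{\PP(E^\vee)}(k)$ and $j_* \rho^* \shA_Z$: full faithfulness of the generators is automatic from the semiorthogonality, and the identification of the restricted functors with $\pi^*, j_*, \rho^*$ at the categorical level is precisely the commutativity of the diagram of $S$-linear adjoint functors displayed just before the theorem statement. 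The perfect-complex version of the theorem follows in the same way, since Propositions \ref{prop:bcFM} and \ref{prop:bcsod} both admit perfect-complex analogues, as noted immediately after Proposition \ref{prop:bcsod}. The only nontrivial point in the argument is a bookkeeping one — tracking through the two base-changes simultaneously and matching components — and there is no real obstacle, mirroring what happens in the blowing-up case of Theorem \ref{thm:bl}.
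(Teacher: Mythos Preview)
The proposal is correct and follows the same approach as the paper's own (one-line) proof: apply Orlov's formula to $\shH_s$ over $S$, then base-change along $X \to S$ exactly as in Theorem~\ref{thm:bl}. One small correction: in this paper ``smooth proper $S$-scheme'' means $X$ is smooth over the fixed base $B$ and proper over $S$, not smooth over $S$ (otherwise the expected-dimension hypothesis on $X_Z$ would be vacuous); the Tor-independence you need therefore comes not from flatness of $a_X$ but from the assumption that $X_Z = X \times_S Z$ has the expected codimension, exactly as in \S\ref{sec:bl}.
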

\begin{proof} The same formula holds for $\shH_s$ over $S$ by Orlov's result {\cite[Prop. 2.10]{O}}. The desired formula follows from base-change along $X \to S$ as we did for blowing up case. 
\end{proof}

\section{HPD with base-locus} \label{sec:app:HPDbs}

Let $X$ be a smooth $B$-scheme with map $X \to \PP(V)$ and $L \subset V^\vee$ be a linear subbundle of rank $\ell$ over $B$. We denote by $L^\perp = \Ker (V \to L^\vee) \subset V$ the orthogonal bundle as usual.  Then $L$ is a linear system through the composition $L \to V^\vee \to H^0(X,\sO_{\PP(V)}(1))$, which determines a rational map  $X \dashrightarrow \PP(L^\vee)$, with base locus $X_{L^\perp}: = X \times_{\PP(V)} \PP(L^\perp)$. We assume $X_{L^\perp}$ is of expected dimension $\dim X - \ell$. One can resolve the indeterminacy of  $X \dashrightarrow \PP(L^\vee)$ blowing up $X$ along base-locus $X_{L^\perp}$. Then by construction there is a natural map $\widetilde{X} = \Bl_{X_{L^\perp}} X \to \widetilde{\PP(V)} :=  \Bl_{\PP(L^\perp)} \PP(V) \to \PP(L^\vee)$ which makes 
$\widetilde{X}$ a $\PP(L^\vee)$-scheme. This map is also compatible with the composition $\widetilde{X} \hookrightarrow X \times \PP(L^\vee) \to \PP(L^\vee)$, where the last map is the projection to second factor. The situation (as in \S \ref{sec:bl}) is summarized in the following diagram, with names of maps as indicated:
	\begin{equation}\label{eqn:app:bl:bsp}
	\begin{tikzcd}
	X_{L^\perp} \times \PP(L^\vee) \ar{d}[swap]{p} \ar[hook]{r}{j} & \widetilde{X} \ar{d}{\beta} \ar[hook]{r}{\iota} & X \times \PP(L^\vee) \ar{ld}[near start]{q} 
	\\
	X_{L^\perp} \ar[hook]{r}{i_L}         & X
	\end{tikzcd}	
	\end{equation}
	
Let $\shA \subset D(X)$ be a $S=\PP(V)$-linear Lefschetz subcategory with Lefschetz center $\shA_0$, Lefschetz components $\shA_k$ and length $m$. We can apply the construction of \S \ref{sec:bl} to $\shA$, and consider the blowing up category 
	$$\widetilde{\shA} := \shA \boxtimes_{\PP(V)} D(\widetilde{\PP(V)}) \subset D(\widetilde{X})$$
of $\shA$ along $\shA_{\PP(L^\perp)}$, where 
	$$\shA_{\PP(L^\perp)} = \shA \boxtimes_{\PP(V)} D(\PP(L^\perp))$$ 
can be regarded as the {\em base-locus category} of $\shA$ for the linear system $L \subset V^\vee$. Then blowing-up category $\widetilde{\shA}$ is equipped with a $\PP(L^\vee)$-linear structure from the projection $\widetilde{\PP(V)} \to \PP(L^\vee)$. 

\subsection{Refined blowing up $\rBl_{\shA_{\PP(L^\perp)}} \shA$ and Lefschetz structure}

Notice that the $\PP(L^\vee)$-linear category $\widetilde{\shA}$ admits a semiorthogonal decomposition by Thm. \ref{thm:bl}:
	\begin{align}  \widetilde{\shA} & = \big\langle \beta^*\shA ,~~ (\shA_{L^\perp})_0, \ldots, (\shA_{L^\perp})_{\ell-2} \big\rangle \notag \\
	& = \big\langle  \beta^*\shA_0, \beta^* (\shA_1(1)), \ldots, \beta^* (\shA_{m-1}(m-1)), ~ ~(\shA_{L^\perp})_0, \ldots, (\shA_{L^\perp})_{\ell-2} \big\rangle.  \label{sod:app:bl-bs} 
	\end{align}

\begin{lemma} \label{lem:app:ref-bl} The action functor $\act \colon \widetilde{\shA} \boxtimes D(\PP(L^\vee)) \to \widetilde{\shA}$ is fully faithful on the following subcategories of $\widetilde{\shA} \boxtimes D(\PP(L^\vee))$:
	$$\beta^*(\shA_{\ell-1})\boxtimes D(\PP(L^\vee)), \beta^*(\shA_{\ell}(1))\boxtimes D(\PP(L^\vee)) \ldots, \beta^*(\shA_{m-1}(m-\ell))\boxtimes D(\PP(L^\vee)),$$
and their images remain a semiorthogonal sequence in $\widetilde{\shA}$. 
\end{lemma}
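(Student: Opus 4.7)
The plan is to mimic the proof of Lemma \ref{lem:lef:nonmoderate}(2), replacing the tautological embedding $X\hookrightarrow X\times\PP(V)$ used there by the analogous embedding $\iota\colon\widetilde X\hookrightarrow X\times\PP(L^\vee)$ exhibiting $\widetilde X$ as the graph closure of the rational map $X\dashrightarrow\PP(L^\vee)$. First I would identify $\widetilde X$ as the zero locus of the regular section of the rank-$(\ell-1)$ bundle $\sE:=\sO_X(1)\boxtimes T_{\PP(L^\vee)}(-1)$ induced by $\sO_{\PP(V)}(-1)\to V\to L^\vee\twoheadrightarrow L^\vee/\sO_{\PP(L^\vee)}(-1)$; this yields the identity $\pi^*\sO_{\PP(L^\vee)}(1)\cong\beta^*\sO_{\PP(V)}(1)\otimes\sO_{\widetilde X}(-E)$ (writing $\pi\colon\widetilde X\to\PP(L^\vee)$ for the resulting morphism), and identifies the action functor on $\beta^*(\shA_k(j))\boxtimes D(\PP(L^\vee))$ with $\iota^*$ applied to $(\shA_k(j))\boxtimes D(\PP(L^\vee))\subset D(X\times\PP(L^\vee))$.

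Next I would invoke the Koszul resolution
$$\iota_*\sO_{\widetilde X}\simeq\bigl[\sO_X(-(\ell-1))\boxtimes\Omega^{\ell-1}(\ell-1)\to\cdots\to\sO_X(-1)\boxtimes\Omega^1(1)\to\sO_X\boxtimes\sO_{\PP(L^\vee)}\bigr]$$
coming from $\wedge^r\sE^\vee=\sO_X(-r)\boxtimes\Omega^r_{\PP(L^\vee)}(r)$, together with adjunction and the projection formula, to present $\Hom_{\widetilde X}(\iota^*(A_1(j_1)\boxtimes F_1),\iota^*(A_2(j_2)\boxtimes F_2))$ as an iterated cone, over $r=0,1,\dots,\ell-1$, of
$$\Hom_X\bigl(A_1,A_2(j_2-j_1-r)\bigr)\otimes\Hom_{\PP(L^\vee)}\bigl(F_1,F_2\otimes\Omega^r(r)\bigr)[-r].$$
The $r=0$ contribution is precisely the Hom in the source category $\widetilde{\shA}\boxtimes D(\PP(L^\vee))$ (using that $\beta^*$ is fully faithful on $\shA$ by Theorem \ref{thm:bl}), so full faithfulness of $\act$ on one summand reduces to vanishing of the $r\ge1$ pieces, and semiorthogonality between distinct summands reduces to vanishing of all $r\ge0$ pieces.

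These vanishings then follow formally from the hypothesis $k\ge\ell-1$. For full faithfulness, with $A_1,A_2\in\shA_k$ and $j_1=j_2$, I would rewrite $\Hom_X(A_1,A_2(-r))=\Hom_X(A_1(r),A_2)$; since $1\le r\le\ell-1\le k$, we have $\shA_k\subset\shA_r$, so $A_1(r)\in\shA_r(r)$ sits at Lefschetz position $r\ge1$ while $A_2\in\shA_0$ sits at position $0$, giving the vanishing. For semiorthogonality with $k_1>k_2$ in $[\ell-1,m-1]$, setting $s:=k_1-k_2+r\ge1$, the bound $r\le\ell-1\le k_2$ ensures $s\le k_1$, and the same positional argument kills every $r$-term.

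The main technical hurdle I anticipate is the setup: verifying that $\widetilde X\hookrightarrow X\times\PP(L^\vee)$ is regularly cut out by the advertised section of $\sE$, and that the Koszul resolution and the subsequent base-change and projection-formula manipulations go through in the $B$-linear generality of the paper. This reduces to the analogous, classical statement for $\widetilde{\PP(V)}\hookrightarrow\PP(V)\times\PP(L^\vee)$ via Tor-independent base-change along $X\to\PP(V)$ (guaranteed by the expected-codimension hypothesis on $X_{L^\perp}$). Once this is in place the bookkeeping is purely formal, and the bound $k\ge\ell-1$ emerges as exactly the condition that keeps $r$ inside the range where $\shA_k\subset\shA_r$.
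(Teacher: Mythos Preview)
Your proposal is correct. It is precisely the alternative argument the paper sketches in the Remark immediately following its proof: work with the embedding $\iota\colon\widetilde X\hookrightarrow X\times\PP(L^\vee)$ and its Koszul resolution (normal bundle $\sO_X(1)\boxtimes T_{\PP(L^\vee)}(-1)$), so that the required vanishings reduce directly to $\Hom_X(\shA_s(s),\shA_0)=0$ in the original Lefschetz decomposition of $\shA$.

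The paper's own proof takes a slightly different route. It uses the graph embedding $\widetilde\Gamma\colon\widetilde X\hookrightarrow\widetilde X\times\PP(L^\vee)$ (normal bundle $\sL\boxtimes T_{\PP(L^\vee)}(-1)$ with $\sL=\beta^*\sO_X(1)\otimes\sO(-E)$), and the resulting Koszul filtration leaves the Hom-spaces $\Hom_{\widetilde\shA}(A_1,A_2\otimes\sL^{-r})$ living on the blowup rather than on $X$. To kill these the paper invokes the mutation Lemma~\ref{lem:mut:bl}, which says that right-mutating $\beta^*(\shA_{i_1}(i_1))$ through the exceptional-divisor blocks $(\shA_{L^\perp})_0,\dots,(\shA_{L^\perp})_{r-1}$ produces $\beta^*(\shA_{i_1}(i_1-r))\otimes\sL^r$. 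Your approach bypasses this mutation step entirely: by computing in $X\times\PP(L^\vee)$ the twist by $\sL$ never appears and the vanishing is read off from the Lefschetz structure of $\shA$ in one line. The paper's approach, on the other hand, is the one that generalizes more smoothly to the subsequent arguments (Proposition~\ref{prop:app:lef:ref-bl} and Lemma~\ref{lem:app:lef:proof}), where one must work inside $\widetilde\shA$ and its exceptional pieces and the mutation identity is used repeatedly.
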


Note that this result is only interesting if $m \ge \ell$. 

\begin{proof}[Proof of Lem. \ref{lem:app:ref-bl}] Denote $\widetilde{\Gamma} \colon \widetilde{X} \hookrightarrow \widetilde{X}  \times \PP(L^\vee)$ the graph embedding of the morphism $ \widetilde{X} \to \PP(L^\vee)$, then one has a commutative diagram:
	\begin{equation} \label{eqn:app:bl-graph}
	\begin{tikzcd} & \widetilde{X}  \times \PP(L^\vee) \ar{d}{\beta \times \Id} \\
	\widetilde{X} \ar[hook]{ur}{\widetilde{\Gamma}} \ar[hook]{r}{\iota}& X\times   \PP(L^\vee)
	\end{tikzcd}
	\end{equation}
The normal bundle of $\widetilde{\Gamma}$ is $N_{\widetilde{\Gamma}} = \sL \boxtimes \shT_{\PP(L^\vee)}(-1)$, where $\sL$ is the line bundle $\sO_{\PP(L^\vee)}(1)|_{\widetilde{X}} = \beta^*\sO_X(1) \otimes \sO_{\widetilde{X}}(-E)$. Then $\widetilde{\Gamma}^* = \act \colon \widetilde{\shA} \boxtimes D(\PP(L^\vee)) \to \widetilde{\shA}$ is the action functor. To prove the lemma, we  need to show the vanishing of the following $\Hom$ space: 
	$$\Hom(\act(A_1 \boxtimes F_1), \act(A_2 \boxtimes F_2))$$
for any $A_1,A_2 \in \widetilde{\shA}, F_1, F_2 \in D(\PP(L^\vee))$. Similar to the proof of  Lem. \ref{lem:non-moderate Lef}, from Koszul resolution for $\widetilde{\Gamma}_* \sO_{\widetilde{X}}$, the above $\Hom$ space is an iterated cone of the $\Hom$ spaces:
	\begin{align} \label{eqn:app:cone:act}
	  &\Hom_{\widetilde{\shA}}\big(A_1,A_2 \otimes \sL^{-r} \big) \otimes \Hom_{\PP(L^\vee)}\big(F_1, F_2 \otimes \Omega^r(r) \big)[-r], &0\le r \le \ell-1,
	\end{align}
where the case $r=0$ corresponds to $\Hom(A_1 \boxtimes F_1, A_2 \boxtimes F_2)$. Then the desired vanishing will follow from: for any $A_k \in \beta^*(\shA_{i_k}(i_k +1 - \ell))$, $k=1,2$, such that $\ell -1 \le i_2 \le i_1 \le m-1$,
	$$\Hom_{\widetilde{\shA}}\big(A_1,A_2 \otimes \sL^{-r} \big) =0, \qquad \forall 1 \le r \le \ell-1.$$
In fact, for all $1 \le r \le \ell-1$, if we right mutate $\beta^*(\shA_{i_1}(i_1))$ passing through $(\shA_{L^\perp})_0, \ldots, (\shA_{L^\perp})_{r-1}$ of Thm. \ref{thm:bl}, it becomes $\beta^*(\shA_{i_1}(i_1-r)) \otimes \sL^r$ by Lem. \ref{lem:mut:bl}. Since $0 \le i_2 - r \le i_1 -1$, hence 
	$$ \beta^* (\shA_{i_2-r}(i_2-r)), \beta^*(\shA_{i_1}(i_1-r)) \otimes \sL^r  $$
is a semiorthogonal sequence of $\widetilde{\shA}$.
The desired result follows from $\shA_{i_2} \subseteq \shA_{i_2-r} \subseteq \shA_{0}$.
\end{proof}

\begin{remark} From the commutative diagram (\ref{eqn:app:bl-graph}), the lemma is equivalent to that the functor $\iota^* \colon \shA \boxtimes D(\PP(L^\vee)) \to  \widetilde{\shA}$ is fully faithful on the subcategories:
	$$\shA_{\ell-1} \boxtimes D(\PP(L^\vee)), \ldots,\shA_{m-1}(m-\ell) \boxtimes D(\PP(L^\vee)),$$
and their images form a semiorthogonal sequence in $\widetilde{\shA}$. Therefore the lemma can also be proved by using Koszul complex $N_{\iota} ^\bullet$ for the embedding $\iota$, where $N_{\iota} = \sO_X(1) \boxtimes \shT_{\PP(L^\vee)}(-1)$.
\end{remark}

Denote the subcategory generated by the images of above lemma by:
	\begin{align*} \widetilde{\shA}^{\rm amb}: & = \big \langle  \act \big(\beta^*(\shA_{\ell-1}) \boxtimes D(\PP(L^\vee))\big), \ldots, \act\big(\beta^*(\shA_{m-1}(m-\ell)) \boxtimes D(\PP(L^\vee))\big) \big \rangle \\
	&  = \big \langle \iota^*(\shA_{\ell-1} \boxtimes D(\PP(L^\vee))), \ldots, \iota^*((\shA_{m-1}(m-\ell) \boxtimes D(\PP(L^\vee))) \big \rangle \subset \widetilde{\shA}.
	\end{align*}
Then $\widetilde{\shA}^{\rm amb}$ is the ``trivial piece" of the $\PP(L^\vee)$-linear structure of $\widetilde{\shA}$, and its orthogonal captures the essential $\PP(L^\vee)$-linear information of the blowing up $\widetilde{\shA}$. Note that $\widetilde{\shA}^{\rm amb} \ne \emptyset$ if and only if $m \ge \ell$. This leads to the concept of a {\em refined} blowing-up:

\begin{definition} For a $\PP(V)$-linear Lefschetz category $\shA$ of length $m$, a subbundle $L \subset V^\vee$ of rank $\ell$ as above. Then {\em refined blowing-up category of $\shA$ along $\shA_{\PP(L^\perp)}$}, denoted by $\rBl_{\shA_{\PP(L^\perp)}} \shA$ (or simply by $\rBlA$ if there is no confusion), is defined to be the $\PP(L^\vee)$-linear subcategory which is right orthogonal to the image of Lem. \ref{lem:app:ref-bl}, i.e.
	$$ \rBlA \equiv \rBl_{\shA_{\PP(L^\perp)}} \shA : = (\widetilde{\shA}^{\rm amb})^\perp \subset \widetilde{\shA}.$$
In particular by definition, $\rBlA = \widetilde{\shA}$ if $m < \ell$, and $\rBlA \subsetneqq \widetilde{\shA}$ if $m \ge \ell$.
\end{definition}

\begin{proposition}\label{prop:app:lef:ref-bl} The refined blowing-up category $\rBlA$ admits a (moderate) $\PP(L^\vee)$-linear Lefschetz structure, with Lefschetz center
	$\rBlA_0$ and a (right) Lefschetz decomposition
	$$ \rBlA= \langle \rBlA_0, \rBlA_1 \otimes \sL, \ldots, \rBlA_{\ell-2} \otimes \sL^{\otimes \ell -2} \rangle,$$
where $\sL = \sO_{\PP(L^\vee)}(1)|_{\widetilde{X}} = \beta^*\sO_X(1) \otimes \sO_{\widetilde{X}}(-E)$ as before, and 
	$$\rBlA_k  :=\langle \beta^* \shA_k', (\shC_L)_0 \rangle, \qquad 0 \le k \le \ell-2.$$
Here $\shA_k'$ for $k \ge 0$ is the subcategory of $\shA_k$ defined by 
	$$\shA_k' := \shA_{\ell-1}^\perp \cap \shA_k \subset \shA_0,$$
and $\shC_L$ is the essential component of $\shA_{L^\perp}$ defined by
	\begin{align*} 
	\shC_L &:= \big\langle i_L^* \shA_{\ell}(1), \ldots, i_L^* \shA_{m-1}(m-\ell) \big \rangle^\perp \subset \shA_{L^\perp} \\
	& = \{ C \in \shA_{L^\perp} \mid i_{L*} C \in \langle \shA_0(1-\ell), \shA_{1}(2-\ell) \ldots, \shA_{\ell-1} \rangle \subset \shA\},
	\end{align*}
where $i_L \colon X_{L^\perp} \to X$ is the inclusion. Finally $(\shC_L)_0$ denotes  $j_*\, p^*(\shC_L)$ as in Thm. \ref{thm:bl}.
\end{proposition}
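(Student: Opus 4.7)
The plan is to start from Orlov's semiorthogonal decomposition of $\widetilde\shA$ (Thm.~\ref{thm:bl}), refine each piece by the Lefschetz decomposition of $\shA$ and the HPD decomposition of $\shA_{L^\perp}$ from Thm.~\ref{thm:HPD}(3), and then use iterated right mutations via Lem.~\ref{lem:mut:bl} to reorganize the result into $\sL$-graded levels, cleanly separating $\widetilde\shA^{\rm amb}$ from the claimed Lefschetz components of $\rBlA$. A key preliminary computation, using the projection formula together with the identification $\sL|_E = p^*\sO_X(1)|_{X_{L^\perp}} \otimes \sO_E(1)$, is that $(\shA_{L^\perp})_k = (\shA_{L^\perp})_0 \otimes \sL^k$ as subcategories of $\widetilde\shA$, which converts the $\Psi_k$-indexed tail of Orlov into a genuine $\sL$-graded sequence.

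Starting from
$$\widetilde\shA = \langle \beta^*\shA_0,\, \beta^*(\shA_1(1)),\, \ldots,\, \beta^*(\shA_{m-1}(m-1)),\, (\shA_{L^\perp})_0,\, \ldots,\, (\shA_{L^\perp})_{\ell-2}\rangle,$$
iteratively mutate each $\beta^*(\shA_j(j))$ rightward through $\min(j,\ell-1)$ of the $(\shA_{L^\perp})_k$'s. By Lem.~\ref{lem:mut:bl}, each such mutation replaces $\beta^*A \otimes \sO(-kE)$ by $\beta^*A \otimes \sO(-(k+1)E)$, so $\beta^*(\shA_j(j)) = \beta^*\shA_j \otimes \beta^*\sO_X(j)$ is converted into $\beta^*\shA_j \otimes \sL^j$ when $j \le \ell-1$, and into $\beta^*(\shA_j(j-\ell+1)) \otimes \sL^{\ell-1}$ when $j \ge \ell$. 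The outcome is an SOD of $\widetilde\shA$ organized into $\ell-1$ levels of the form $\langle \beta^*\shA_k \otimes \sL^k,\, (\shA_{L^\perp})_0 \otimes \sL^k\rangle$ for $0 \le k \le \ell-2$, followed by the residual pieces $\beta^*(\shA_j(j-\ell+1)) \otimes \sL^{\ell-1}$ for $\ell-1 \le j \le m-1$.

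I would then refine each level further using the SODs $\shA_k = \langle \shA_k',\, \shA_{\ell-1}\rangle$ (valid for $0 \le k \le \ell-2$) and $\shA_{L^\perp} = \langle \shC_L,\, i_L^*\shA_\ell(1),\, \ldots,\, i_L^*\shA_{m-1}(m-\ell)\rangle$. The essential pieces $\beta^*\shA_k' \otimes \sL^k$ and $(\shC_L)_0 \otimes \sL^k$ then assemble into $\rBlA_k \otimes \sL^k$ as claimed. The remaining ``redundant'' pieces---$\beta^*\shA_{\ell-1} \otimes \sL^k$ for $0 \le k \le \ell-1$, the images $\Psi_k(i_L^*\shA_j(j-\ell+1))$ for $j \ge \ell$, and the Step-2 residuals---should match up exactly with the Beilinson-expanded generators $\iota^*(\shA_{\ell-1+k}(k) \boxtimes \sO_{\PP(L^\vee)}(j))$ of $\widetilde\shA^{\rm amb}$, by means of the commutative diagram~(\ref{eqn:app:bl-graph}) and Beilinson's decomposition $D(\PP(L^\vee)) = \langle \sO, \sO(1), \ldots, \sO(\ell-1)\rangle$. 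Since by construction $\rBlA = (\widetilde\shA^{\rm amb})^\perp$, removing the redundant pieces from the refined SOD leaves precisely the claimed Lefschetz decomposition.

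The nesting $\rBlA_0 \supset \rBlA_1 \supset \cdots \supset \rBlA_{\ell-2}$ follows immediately from $\shA_0' \supset \cdots \supset \shA_{\ell-2}'$ together with the common piece $(\shC_L)_0$, and moderateness (length $= \ell-1 < \ell = \operatorname{rank} L^\vee$) is automatic. The main obstacle is the precise matching in Step~3 of the redundant pieces---carrying awkward $\sO(kE)$-twists---with the Beilinson-expanded generators of $\widetilde\shA^{\rm amb}$; I expect this to require a second round of mutations absorbing the $\sO(kE)$-factors and Koszul-resolution $\Hom$-vanishing arguments analogous to those in the proofs of Lem.~\ref{lem:non-moderate Lef} and Lem.~\ref{lem:app:ref-bl}, which ultimately reduce every required vanishing to the semi-orthogonalities already encoded in the Lefschetz structure of $\shA$ and the HPD decomposition of $\shA_{L^\perp}$.
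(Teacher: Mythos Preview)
Your overall strategy---mutate Orlov's decomposition via Lem.~\ref{lem:mut:bl} into $\sL$-graded levels, split each level using $\shA_k = \langle \shA_k', \shA_{\ell-1}\rangle$ and $\shA_{L^\perp} = \langle \shC_L, i_L^*\shA_\ell(1), \ldots\rangle$, then separate out $\widetilde\shA^{\rm amb}$---is the same backbone as the paper's, and your observation $(\shA_{L^\perp})_k = (\shA_{L^\perp})_0 \otimes \sL^k$ is exactly what the paper records in its ``Observation'' paragraph. The difference is in how the final separation is carried out, and your expectation of an \emph{exact} match between the redundant pieces and the Beilinson generators of $\widetilde\shA^{\rm amb}$ is not correct: the pieces $j_*p^*(i_L^*\shA_j(j+1-\ell)) \otimes \sL^r$ are not individual Beilinson generators $\beta^*\shA_j(j+1-\ell) \otimes \sL^r$. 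The right tool here is not ``a second round of mutations'' but the exact triangle $\otimes\,\sO(-E) \to \Id \to j_*j^*$, which exhibits each such piece as the cone of two elements of $\widetilde\shA^{\rm amb}$ (this is the paper's ``Generation'' step).

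There is also a structural point your plan glosses over: after your reorganization, the essential and redundant pieces are \emph{interleaved} both within each $\sL$-level and across levels, so the mutated SOD alone does not give $\Hom(\widetilde\shA^{\rm amb}, \text{essential}) = 0$. The paper therefore does not attempt to read this off from a single rearranged SOD; instead it verifies the inclusion $\shB := \langle \rBlA_k \otimes \sL^k\rangle \subset (\widetilde\shA^{\rm amb})^\perp$ \emph{directly} in two steps---Step~1 for the $\beta^*\shA_k'$-pieces and Step~2 for the $(\shC_L)_0$-pieces---using exactly the Koszul-resolution $\Hom$-computations you anticipate, reducing everything to the vanishings (\ref{eqn:app:van:rbl})--(\ref{eqn:app:van:rbl-bs}) and the defining orthogonality of $\shC_L$. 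Only after that does it prove generation via the cone trick above. So your ingredients are right, but the paper's organization (containment first by $\Hom$-vanishing, then generation by the divisor triangle) avoids the ``matching'' problem you flagged as the main obstacle.
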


The fact that $i_L^*$ is fully faithful on $\shA_{\ell}(1), \ldots, \shA_{m-1}(m-\ell)$ follows from the Koszul resolution for $X_{L^\perp} \subset X$. Therefore from adjunction, there is a semiorthogonal decomposition
	$$\shA_{L^\perp} = \langle \shC_L, ~~i_L^* \shA_{\ell}(1), \ldots, i_L^* \shA_{m-1}(m-\ell)\rangle.$$
The nontrivial component $\shC_L$ of $\shA_{L^\perp}$, called {\em Kuznetsov component} of $\shA_{L^\perp}$, plays an essential role in Kuznetsov's HPD theory \cite{Kuz07HPD}. In the terminology of \cite{JLX17}, $\shC_L= {}^{\rm prim} \shA_{L^\perp}$ is the left primitive component of $\shA_{L^\perp}$.

\begin{proof} Applying Lem. \ref{lem:mut:bl} to (\ref{sod:app:bl-bs}), we obtain that the following categories
	$$\langle \beta^* \shA_0', (\shC_L)_0 \rangle, \langle \beta^* \shA_1' \otimes \sL, (\shC_L)_0 \otimes \sL \rangle \ldots,  \langle \beta^*\shA_{\ell-2}' \otimes \sL^{\ell-2}, (\shC_L)_{0} \otimes \sL^{\ell-2} \rangle,$$
forms a semiorthogonal sequence of $\widetilde{\shA}$. Denote their span by $\shB$, then we have $\shB = \langle \rBlA_0, \ldots, \rBlA_{\ell-e} \otimes \sL^{\otimes \ell -e} \rangle$, by the definition of $\rBlA_k$. Then to show the proposition is equivalent to show that $\shB = \rBlA$ as subcategories of $\widetilde{\shA}$.


\medskip \noindent{\em Observation.} From the decomposition (\ref{sod:app:bl-bs}), if we right mutation $\beta^*(\shA_k(k))$ passing through $(\shA_{L^\perp})_0, \ldots, (\shA_{L^\perp})_{r-1}$, where $k \ge 0$ and $1 \le r \le \ell-1$, we end up with $\beta^*(\shA_k(k-r)) \otimes \sL^r$ by Lem. \ref{lem:mut:bl} (which is also true for $r=0$). Therefore we obtain the following:
	\begin{equation}\label{eqn:app:van:rbl}
	\Hom\big(\beta^*(\shA_k(k-r)) \otimes \sL^r, \beta^*(\shA_j(j)) \big) = 0, \qquad 0 \le j < k,~ 0 \le r \le \ell-1,
	\end{equation}
as well as for $s=0,1,\ldots, r-1$:
	\begin{equation}\label{eqn:app:van:rbl-bs}
	\Hom\big(\beta^*(\shA_k(k-r) \otimes \sL^{r}, (\shA_{\PP(L^\perp)})_{0} \otimes \sL^s \big) = 0, \qquad k\ge 0, ~ 0 \le s < r \le \ell-1,
	\end{equation}
since $(\shA_{\PP(L^\perp)})_s = (\shA_{\PP(L^\perp)})_{0} \otimes \sL^s$, where $\sL = \beta^*\sO_X(1) \otimes \sO(-E)$. 

\medskip \noindent{\em Step 1.}  We show that the following components of $\shB$ are contained in $\rBlA$:
	$$\big\langle \beta^* \shA_0', \beta^* \shA_1' \otimes \sL, \ldots, \beta^*\shA_{\ell-2}' \otimes \sL^{\ell-2}\big \rangle \subset  (\widetilde{\shA}^{\rm amb})^\perp = \rBlA .$$

This is equivalent to show, for any $A_1 \in \shA_{i_1}(i_1+1-\ell)$, $F_1 \in D(\PP(L^\vee))$, and $A_2 \in \shA_{i_2}'$ with $0 \le i_2 < \ell-1 \le i_1 \le m-1$, the following holds: 
	$$\Hom(\act(\beta^*A_1 \boxtimes F_1), \beta^* A_2 \otimes \sL^{i_2}) = \Hom(\act(A_1 \boxtimes F_1), \act(A_2 \boxtimes \sO_{\PP(L^\vee)}(i_2))=0$$
Similar to the proof of Lem. \ref{lem:app:ref-bl}, the above $\Hom$ space is an iterated cone of the $\Hom$ space of (\ref{eqn:app:cone:act}). Therefore we only need to show
	$$\Hom_{\widetilde{\shA}}(\beta^*A_1 \otimes \sL^r, \beta^* A_2) =0, \qquad 0 \le r \le \ell-1,$$
for all $A_1 \in \shA_{i_1}(i_1+1-\ell)$, $A_2 \in \shA_{i_2}'$, where $i_1, i_2$ are integers that  $0 \le i_2 < \ell-1 \le i_1 \le m-1$.

First consider the case $i_1 = \ell -1$. For $r=0$, $\Hom(\beta^* A_1, \beta^*A_2) = \Hom(A_1, A_2) = 0$, since $A_1 \in \shA_{\ell-1}$, and $A_2 \in \shA_{i_2}' \subset \shA_{\ell-1}^\perp$, by the way we define $\shA_{k}'$. 
For $1 \le r \le \ell-1$, then the vanishing follows from (\ref{eqn:app:van:rbl}) applied to $k=r > j=0$, and $\shA_{\ell-1} \subset \shA_r$, $\shA_{i_2}' \subset \shA_0$.

For $i_1 = \ell, \ldots, m-1$, then the vanishing $\Hom(\beta^*A_1 \otimes \sL^r, \beta^* A_2) =0$ for $0 \le r \le \ell-1$ follows from applying (\ref{eqn:app:van:rbl}) to $k = i_1 + r - (\ell-1) \in [i_1-(\ell-1), i_1] \subset [1,m-1]$ and $j = 0 < k$, and that $\shA_{i_1} \subset \shA_{k}$ hence $\shA_{i_1}(i_1 + 1 -\ell) \subset \shA_{k}(k-r)$, and $\shA_{i_2}' \subset \shA_0$.

\medskip \noindent{\em Step 2.} The remaining components of $\shB$ (for Step 1) are also contained in $\rBlA$, i.e.
	$$\big\langle (\shC_L)_0, (\shC_L)_{0} \otimes \sL, \ldots, (\shC_L)_{0} \otimes \sL^{\ell-2}  \big\rangle \subset (\widetilde{\shA}^{\rm amb})^\perp = \rBlA.$$
This is equivalent to show
	$$\Hom_{\widetilde{\shA}}(\act(\beta^*A \boxtimes F), ~ j_* p^* C\otimes \sL^r) =0, \qquad 0 \le r \le \ell-2,$$
for all $A \in \shA_{i}(i+1-\ell)$ where $\ell -1 \le i \le m-1$, $F \in D(\PP(L^\vee))$, $C \in \shC_L$. 

Notice that for $i = \ell-1$, this follows directly from (\ref{eqn:app:van:rbl-bs}) applied to $k=r=\ell-1$. Now we focus on the case $i = \ell, \ldots, m-1$. From commutative diagram (\ref{eqn:app:bl-graph}), 
	$$\act(\beta^*A \boxtimes F)=\Gamma^*(\beta^* A \boxtimes F) = \iota^*(A \boxtimes F).$$
On the other hand, since the ambient square of (\ref{eqn:app:bl:bsp}) is Tor-independent (as it is a flat base-change), 
	$$\iota_* (j_* p^* C\otimes \sL^r) = q^* i_{L*} C \otimes \sO_{\PP(L^\vee)}(r) =  i_{L*} C \boxtimes \sO_{\PP(L^\vee)}(r).$$
Hence by adjunction, 
	\begin{align*}
	&\Hom_{\widetilde{\shA}}(\act(\beta^*A \boxtimes F), ~ j_* p^* C\otimes \sL^r) = \Hom(\iota^*(A \boxtimes F), ~ j_* p^* C\otimes \sL^r)  \\
	=&  \Hom_{\shA \boxtimes D(\PP(L^\vee))}(A \boxtimes F,~ \iota_* (j_* p^* C\otimes \sL^r))  =  \Hom(A \boxtimes F, ~  i_{L*} C \boxtimes \sO_{\PP(L^\vee)}(r))  \\
	=& \Hom_{\shA}(A, i_{L*} C) \otimes \Hom_{\PP(L^\vee)}(F, \sO_{\PP(L^\vee)}(r))  = 0.
	\end{align*}
Last equality follows from $\Hom_{\shA}(A, i_{L*} C) = \Hom_{\shA_{L^\perp}}(i_L^* A, C)= 0$ for $A \in \shA_{i}(i+1-\ell)$, $i=\ell, \ldots, m-1$, $C \in \shC_L$. This is precisely the definition of $\shC_L$.

\medskip \noindent{\em Generation.} From last two steps, we have shown that $\shB \subset \rBlA$. It remains to show $\shB$ generates $\rBlA$, or equivalently $\shB$ together with $\widetilde{\shA}^{\rm amb}$ generate $\widetilde{\shA}$. Compare the span of $\shB$ and $\widetilde{\shA}^{\rm amb}$ with the semiorthogonal decomposition (\ref{sod:app:bl-bs}) of $\widetilde{\shA}$, we only need to show
	$$(i_L^*\shA_\ell(1))_0 \otimes \sL^r, \ldots, (i_L^* \shA_{m-1} (m-\ell))_0 \otimes \sL^r$$
belongs to the category generated by $\shB$ and $\widetilde{\shA}^{\rm amb}$ for all $r=0,\ldots,\ell-2$. The category $(i_L^*\shA_i(i+1-\ell))_0 \otimes \sL^r$ for  $\ell \le i \le m-1$ and $0 \le r \le \ell-2$ is generated by elements 
	$$ j_* p^* (i_L^*A) \otimes \sL^r = j_* j^*\iota^* (A \boxtimes \sO_{\PP(L^\vee)}(r)), \qquad A \in \shA_i(i+1-\ell).$$
From the distinguished triangle of functors:
	$$\otimes \sO(-E) \to \Id \to j_* j^* \xrightarrow{[1]}{},$$
the element $j_* p^* (i_L^*A) \otimes \sL^r$ is isomorphic to the cone:
	\begin{align*}
	j_* p^* (i_L^*A) \otimes \sL^r = &\cone \Big( \iota^* (A \boxtimes \sO_{\PP(L^\vee)}(r)) \otimes \sO(-E) \to \iota^* (A \boxtimes \sO_{\PP(L^\vee)}(r))\Big) \\
	= & \cone  \Big( \act \big(\beta^*(A(-1)) \boxtimes \sO_{\PP(L^\vee)}(r+1)\big) \to \act \big(\beta^*A\boxtimes \sO_{\PP(L^\vee)}(r)\big) \Big).
	\end{align*}
Since $A \in  \shA_i(i+1-\ell)$, $A(-1) \subset \shA_{i}(i-\ell) \subset \shA_{i-1}(i-\ell)$, for $\ell \le i \le m-1$, both terms of the above cone belong to $\widetilde{\shA}^{\rm amb}$. Hence we are done.
\end{proof}

\subsection{HPD with base-locus}

Next we show that the HPD of $\rBlA$ over $\PP(L^\vee)$ is simply given by the linear section $(\shA^\hpd)|_{\PP(L)}$ of $\shA^\hpd$, which can also be intrinsically defined as the nontrivial component $\shD_L$ of the generalized universal hyperplane of $\shA$ over $\PP(L)$. More precisely, apply the construciton of \S \ref{sec:hyp} to the case $S = \PP(V)$, $Z = \PP(L^\perp)$, $E = L^\vee \otimes \sO_{\PP(V)}(1)$, $i \colon \PP(L^\perp) \hookrightarrow \PP(V)$. Consider the section $s_L$ of $E$ which is the canonical section which corresponds to the inclusion $L\subset V^\vee$ under the identification:
 	$$s_L \in \Gamma(\PP(V), L^\vee \otimes \sO_{\PP(V)}(1)) = \Hom_B(L, V^\vee).$$
Therefore we can form the {\em generalized universal hyperplane} for $X \to \PP(V) \dasharrow \PP(L^\vee)$:
	$$\shH_{X,L} := \shH_{X, s_L}  \hookrightarrow \PP_X(E) = X \times \PP(L),$$
where $\delta_{\shH_{L}} \colon \shH_{X,L} \hookrightarrow X \times \PP(L)$ is an inclusion of the divisor $\sO(1,1):=\sO_X(1) \boxtimes \sO_{\PP(L)}(1)$. Note that the inclusion $\delta_{\shH_{L}}$ is also the restriction of the inclusion of universal hyperplane $\delta_{\shH} \colon \shH_X \hookrightarrow X \times \PP(V^\vee)$ to $\PP(L) \subset \PP(V^\vee)$. Using the notation of \S \ref{sec:hyp}, we have $\PP_Z(N_i^\vee) = \PP_{\PP(L^\perp)}(E^\vee)  = \PP(L^\perp) \times \PP(L)$, and $j \colon  \PP(L^\perp) \times \PP(L) \hookrightarrow \shH_{\PP(L^\perp), L}$ is the inclusion of fiber of $\shH_{\PP(L^\perp), L}$ over $Z = \PP(L^\perp)$. By abuse of notation, we also denote the base-change of $j$ to $X \to S=\PP(V)$ by the same notation $j \colon X_{L^\perp} \times \PP(L) \hookrightarrow \shH_{X,L}$. Notice that the projection $\shH_{X,L} \to X$ is non-flat exactly along the base-locus $X_{L^\perp}$; and that if $X_{L^\perp} = \emptyset$, $\shH_{X,L}$ is nothing but the usual universal hyperplane for $X \to \PP(L^\perp)$, therefore the construction $\shH_{X,L}$ indeed generalizes the construction of universal hyperplane section. 

Let $\shA \subset D(X)$ be a $\PP(V)$-linear Lefschetz category of length $m$, with Lefschetz center $\shA_0$ and components $\shA_k$'s. Then by construction of \S \ref{sec:hyp} we obtain the {\em generalized universal hyperplane} $\shH_{\shA,L} \subset D(\shH_{X,L})$ for $\shA$, with induced adjoint functors:
	$$\begin{tikzcd} \shH_{\shA,L} \ar[shift left]{r}{\delta_{\shH_L \, *}} & \shA \boxtimes D(\PP(L)) \ar[shift left]{l}{\delta_{\shH_L}^*}.
	\end{tikzcd}
	$$

Our main result is the "HPD between linear section and refined blowing up":
\begin{theorem}\label{thm:app:HPDbs} For a $\PP(V)$-linear Lefschetz category $\shA$ of length $m$, with Lefschetz center $\shA_0$ and components $\shA_k$, and a subbundle $L\subset V^\vee$ as above. Consider the ``generalized HPD category" $\shF_L$, i.e. the full $\PP(L)$-linear subcategory of $\shH_{\shA,L}$ defined by: 
	$$\shF_L : = \{C \in \shH_{\shA,L}  \mid \delta_{\shH_L \,*} \,C \in \shA_0 \boxtimes D(\PP(L))\} \subset \shH_{\shA,L}.$$
Then $\shF_L$ is naturally $\PP(L)$-linear equivalent to the linear section $\shA^\hpd_{\PP(L)}$, i.e. the base-change category of the HPD category $\shA^\hpd = (\shA)^\hpd_{/\PP(V)}$ along inclusion $\PP(L) \subset \PP(V^\vee)$. Furthermore, $\shF_L = \shA^\hpd_{\PP(L)}$ is the HPD category (over $\PP(L^\vee)$) of the refined blowing-up category $\rBl_{\shA_{\PP(L^\perp)}} \shA$:
	$$\shA^\hpd_{\PP(L)} \simeq (\rBl_{\shA_{\PP(L^\perp)}} \shA)^\hpd,$$
where $\rBl_{\shA_{\PP(L^\perp)}} \shA$ is equipped with $\PP(L^\vee)$-linear Lefschetz structure given in Prop. \ref{prop:app:lef:ref-bl}. 
\end{theorem}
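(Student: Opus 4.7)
For the first part of the theorem, the plan is to invoke the base-change formalism along the linear inclusion $\PP(L)\hookrightarrow\PP(V^\vee)$. This is a regular closed embedding, and $\shH_X\to\PP(V^\vee)$ is flat (a family of hyperplanes), so the base change is Tor-independent and hence faithful in the sense of Def.~\ref{def:bc}. The fiber product is $\shH_X\times_{\PP(V^\vee)}\PP(L)=\shH_{X,L}$ (both are cut out in $X\times\PP(L)$ by the restriction of the same canonical section of $\sO_X(1)\boxtimes\sO_{\PP(L)}(1)$). Applying Prop.~\ref{prop:bcsod} to the $\PP(V^\vee)$-linear decomposition of $\shH_\shA$ in Lem.~\ref{lem:sodH} yields a $\PP(L)$-linear decomposition of $\shH_{\shA,L}$ with $\shA^\hpd_{\PP(L)}$ as the leftmost piece; Lem.~\ref{lem:bc-char} then characterizes this piece precisely by the push-forward condition $\delta_{\shH_L*}C\in\shA_0\boxtimes D(\PP(L))$ defining $\shF_L$.

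For the second part, the strategy is to compare two semi-orthogonal decompositions of $\shH_{\rBlA}$. On the one hand, Lem.~\ref{lem:sodH} applied to $\rBlA$ with the Lefschetz structure of Prop.~\ref{prop:app:lef:ref-bl} gives
\[
\shH_{\rBlA}=\bigl\langle(\rBlA)^\hpd,\ \widetilde{\delta}_L^*(\rBlA_1\otimes\sL\boxtimes D(\PP(L))),\ \ldots,\ \widetilde{\delta}_L^*(\rBlA_{\ell-2}\otimes\sL^{\ell-2}\boxtimes D(\PP(L)))\bigr\rangle.
\]
On the other hand, by associativity of the exterior product (Lem.~\ref{lem:ass:tensor}),
\[
\shH_{\widetilde{\shA}}=\widetilde{\shA}\boxtimes_{\PP(L^\vee)}D(Q_{\PP(L^\vee)})=\shA\boxtimes_{\PP(V)}D(\widetilde{\shH}_{\PP(V),L}),
\]
where $\widetilde{\shH}_{\PP(V),L}:=\widetilde{\PP(V)}\times_{\PP(L^\vee)}Q_{\PP(L^\vee)}$ is identified as the blow-up of the generalized universal hyperplane $\shH_{\PP(V),L}$ along the codimension-$(\ell-1)$ l.c.i.\ center $\PP(L^\perp)\times\PP(L)$. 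Applying Thm.~\ref{thm:bl} (in the form of Rmk.~\ref{rmk:thm:bl:nreg}, to handle singularities of $\shH_{X,L}$ along the center) to the $\shH_{\PP(V),L}$-linear category $\shH_{\shA,L}$ gives a semi-orthogonal decomposition of $\shH_{\widetilde{\shA}}$ in terms of $\beta_H^*\shH_{\shA,L}$ and exceptional-divisor pieces $(\shA_{\PP(L^\perp)}\boxtimes D(\PP(L)))_k$ for $k=0,\ldots,\ell-3$. Refining $\shH_{\shA,L}$ further by the Part-1 decomposition produces a decomposition of $\shH_{\widetilde{\shA}}$ whose leftmost piece is $\beta_H^*\shF_L=\beta_H^*\shA^\hpd_{\PP(L)}$.

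The final step is to restrict to $\shH_{\rBlA}\subset\shH_{\widetilde{\shA}}$ (the right-orthogonal of the relative-universal-hyperplane analogue of $\widetilde{\shA}^{\rm amb}$) and to match the two decompositions by a chain of mutations using Lem.~\ref{lem:mut:bl}. The redundant Lefschetz pieces $\beta_H^*\delta_{\shH_L}^*(\shA_k(k)\boxtimes D(\PP(L)))$ for $k\ge\ell-1$ are absorbed into the ambient contributions, while the remaining Lefschetz and exceptional-divisor pieces recombine --- with the latter projecting onto the Kuznetsov component $(\shC_L)_0\otimes\sL^k$ --- to form the pieces $\widetilde{\delta}_L^*(\rBlA_k\otimes\sL^k\boxtimes D(\PP(L)))$. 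The main obstacle is this final bookkeeping: a direct but intricate analogue of the mutation argument in the proof of Prop.~\ref{prop:app:lef:ref-bl}, now performed in the fibered setting over $\PP(L)$ and relying on the same Koszul-vanishing identities (\ref{eqn:app:van:rbl}) and (\ref{eqn:app:van:rbl-bs}). Once carried out, uniqueness of the leftmost piece identifies $(\rBlA)^\hpd$ with $\beta_H^*\shF_L$, and hence with $\shF_L$ itself via the fully faithful embedding $\beta_H^*$.
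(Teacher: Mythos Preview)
Your approach is essentially the same as the paper's: identify $\shH_{\widetilde{\shA}}$ as the blow-up of $\shH_{\shA,L}$ along $X_{L^\perp}\times\PP(L)$, decompose it via the blow-up formula, and compare with the decomposition coming from Lem.~\ref{lem:sodH} applied to $\rBlA$. Two small points worth noting: first, the ``final bookkeeping'' you flag as the main obstacle is indeed the bulk of the work, and the paper packages it as a standalone Lemma~\ref{lem:app:lef:proof} (proved via three sub-lemmas that parallel Lem.~\ref{lem:app:ref-bl} and the three steps of Prop.~\ref{prop:app:lef:ref-bl}, using the Koszul resolution for the embedding $\tilde\iota\colon\widetilde{Y}\hookrightarrow\PP_{\widetilde{Y}}(\sE)$ with normal bundle $\sL\otimes\sM^\vee$); second, the resulting equivalence is not literally $\gamma^*$ but $\gamma^*(-)\otimes\sO(-E_{Y_Z})$, i.e.\ there is a line-bundle twist, which the paper explains as the distinction between left and right HPD.
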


\begin{remark} 
	$(1)$ The Lefschetz structure on $\shA^\hpd_{\PP(L)}$ obtained from $\shA^\hpd_{\PP(L)} \simeq (\rBl_{\shA_{\PP(L^\perp)}} \shA)^\hpd$ is equivalent to the one given by the fundamental theorem of HPD applied to linear section $\shA^\hpd_{\PP(L)} =\shA^\hpd \boxtimes_{\PP(V^\vee)} D(\PP(L))$. In particular the theorem implies that the linear section $\shA^\hpd_{\PP(L)}$ of HPD contains (exactly) one copy of $\shC_L$, the nontrivial component dual linear section $\shA_{\PP(L^\perp)}$. This is exactly the main statement of the fundamental theorem of HPD. \\
	$(2)$ From HPD is a duality relation (\cite{Kuz07HPD, JLX17}), we also have 
	$\rBl_{\shA_{\PP(L^\perp)}} \shA \simeq (\shA^\hpd_{\PP(L)} )^\hpd_{/ \PP(L)}.$\\
	$(3)$ If $m < \ell$, then $\rBlA = \Bl_{\shA_{\PP(L^\perp)}} \shA$, and the theorem is the generalization of the main result ``the HPD between linear section and blowing up" of \cite{CT15} to categories. If $m \ge \ell$, then $\rBlA \subsetneqq  \Bl_{\shA_{\PP(L^\perp)}} \shA$, and the refinement is necessary for the above HPD statement to hold.
\end{remark}

To prove the theorem, we first fix certain notations for the rest of this section. Set
$$S_0 := \PP(L), \qquad \sE := \Omega^1_{\PP(L)} \otimes \sO_{\PP(L)}(1), \qquad Q_L := \PP_{S_0}(\sE).$$
Notice $Q_L \subset \PP(L) \times \PP(L^\vee)$ (with inclusion induced by the inclusion of vector bundle $\Omega^1_{\PP(L)}  \otimes \sO_{\PP(L)}(1)\subset L^\vee \otimes \sO_{\PP(L)}$ over $S_0$) is nothing but the universal quadric for $\PP(L)$ and $\PP(L^\vee)$. Then the twisted relative cotangent bundle 
	\begin{align}\label{eqn:sM}
	\sM : = \Omega^1_{\PP_{S_0}(\sE)/S_0} \otimes \sO_{\PP_{S_0}(\sE)}(1)
	\end{align}
is the $0$'th cohomology of the complex of vector bundles
	$\sM \simeq \{ \sO_{\PP(L)}(-1) \to L \otimes \sO \to \sO_{\PP(L^\vee)}(1)\}$
by using (dual) Euler sequence twice, and also
	$$\sM^\vee \simeq \{ \sO_{\PP(L^\vee)}(-1) \to L^\vee \otimes \sO \to \sO_{\PP(L^\vee)}(-1)\} \simeq \{\shT_{\PP(L^\vee)}(-1) \to \sO_{\PP(L)}(1)\}.$$ We continue to denote $Q \subset \PP(V) \times \PP(V^\vee)$ for the universal quadric for $\PP(V)$ and $\PP(V^\vee)$.

We further denote for the rest of the section that
$$S := \shH_{\PP(V),L} = \shH_{\PP(L)}/\PP(V^\vee), \qquad Z := \PP(L^\perp) \times \PP(L) \subset S, \qquad \widetilde{S} := \Bl_Z S.$$
Then $Z \subset S$ is the zero locus of a regular section of the (pulling-back of the) vector bundle $\sE \otimes \sO_{\PP(V)}(1)$. The key is to observe that the universal hyperplane $(\shH_{\widetilde{\PP(V)}})_{/\PP(L^\vee)} \subset \widetilde{\PP(V)} \times \PP(L)$ for the blowing up $\widetilde{\PP(V)}\to \PP(L^\vee)$ is nothing but the blowing up $\widetilde{S}$ of $S$. We have a $\PP(V) \times \PP(L)$-linear commutative diagram similar to (\ref{diag:Bl_Z S}) but for the blowing up $\widetilde{S} \to S$, with $E_Z = \PP(L^\perp) \times Q $, where $S \subset  \PP(V) \times \PP(L)$ is a $\sO(1,1)$-divisor. If we base-change these constructions along the natural morphism 
	$$\overline{Y}:= X \times \PP(L) \to  \overline{S}: =\PP(V) \times \PP(L),$$
then we obtain that the universal hyperplane $\widetilde{Y}$ of the blowing up $\tilde{X} = \Bl_{X_{L^\perp} X}$,
	$$\widetilde{Y} := \widetilde{S} \times_{X \times \PP(L) } (\PP(V) \times \PP(L))  \equiv (\shH_{\tilde{X}})_{/\PP(L^\vee)} \subset \widetilde{X} \times \PP(L)$$
is the blowing up of the generalized universal hyperplane 
	$$Y: = S \times_{X \times \PP(L)}  (\PP(V) \times \PP(L)) \equiv  \shH_{X,L} $$
 along $\hat{j} \colon X_{L^\perp} \times \PP(L) \hookrightarrow \shH_{X,L}$. Therefore we have the following $\overline{Y}$-linear commutative diagram for the blowing up $\gamma \colon \widetilde{Y}  \to Y$, with names of morphisms as indicated:
	\begin{equation} \label{digram:bl:Y}
	\begin{tikzcd}[row sep= 2.6 em, column sep = 2.6 em]
	E_{Y_Z} =   X_{L^\perp} \times Q_L \ar{d}[swap]{p_Q} \ar[hook]{r}{j_Q} & \widetilde{Y}  =\shH_{\widetilde{X}}   \ar{d}{\gamma} \ar[hook]{r}{\iota} & \PP_Y(\sE) \ar{ld}[near start]{g} 
	\\
	Y_Z = X_{L^\perp} \times \PP(L) \ar[hook]{r}{\hat{j}}         & Y =  \shH_{X,L}
	\end{tikzcd}	
	\end{equation}	

If we apply the construction of \S \ref{sec:bl} (see also Rmk. \ref{rmk:thm:bl:nreg}) to the $\overline{S}$-linear subcategory $\overline{\shD}: = \shA \boxtimes D(\PP(L)) \subset D(\overline{S})$, we obtain that the universal hyperplane $\shH_{\widetilde{\shA}}$ for the $\PP(L^\vee)$-linear category $\widetilde{\shA}$ is the blowing up category of the generalized universal hyperplane $\shH_{\shA,L}$ along the base-point category
	$(\shH_{\shA,L})_{\PP(L^\perp) \times \PP(L)} = \shA_{L^\perp} \boxtimes D(\PP(L))$. Denote by
	$$\widetilde{\shD} := \shH_{\widetilde{\shA}}, \quad \shD:= \shH_{\shA,L}, \quad \text{and} \quad \shD_{Z} : = (\shD)_{\PP(L^\perp) \times \PP(L)} = \shA_{L^\perp} \boxtimes D(\PP(L)).$$

We fix the following notations for line bundles: denote the pull-backs of the line bundle $\sO_{\PP(L^\vee)}(1)$ to $\widetilde{X}$ and also $\shH_{\widetilde{X}}$ by the same notation $\sL$, by abuse of notations, and the pull-backs 
of line bundles $\sO_{\PP(V)}(1)$ (all factoring through $X \to \PP(V)$) by $\sO_X(1)$. Then 
	$$\sL|_{\widetilde{X}} = \beta^* \sO_X(1) \otimes \sO(-E), \qquad \sL|_{\shH_{\widetilde{X}}} = \gamma^* \sO_X(1) \otimes \sO(-E_Q).$$
To avoid confusions, we denote the induced line bundles on $Q_L \subset \PP(L) \times \PP (L^\vee)$ by:
	$$\sO_{Q_L}(1,0) = (\sO_{\PP(L)}(1) \boxtimes \sO_{\PP(L^\vee)} )|_{Q_L}, \qquad \sO_{Q_L}(0,1) = (\sO_{\PP(L)} \boxtimes \sO_{\PP(L^\vee)}(1) )|_{Q_L}.$$

Notice that the category $\shD= \shH_{\shA,L}$ admits a $\PP(V)$-linear structure from pulling back the $\PP(V)$-linear structure on $\shA \subset D(X)$. By Thm. \ref{thm:hyp}, there is a semiorthogonal decomposition:
	$$ \shD \equiv \shH_{\shA,L} = \langle \shA^\hpd_{\PP(L)},~ \shA_1(1) \boxtimes D(\PP(L)), \ldots, \shA_{m-1}(m-1) \boxtimes D(\PP(L)) \rangle,$$
which coincides of the base-change of Lem. \ref{lem:sodH} along $\PP(L) \subset \PP(V^\vee)$. This implies the fist statement $\shF_L =  \shA^\hpd_{\PP(L)}$ of Thm. \ref{thm:app:HPDbs}. Twisting above $\sO_X(-1)$, then $\shD$ can be regarded to have a {\em $\PP(V)$-linear Lefschetz structure}:
	$$\shD = \langle \shD_0, \shD_1 \otimes \sO_X(1), \ldots, \shD_{m-2} \otimes \sO_X(m-2)\rangle,$$
with Lefschetz components (which are $S_0= \PP(L)$-linear)
	\begin{align}\label{eqn:sod:D_k}
	\shD_k= 
	\begin{cases} \big\langle  \shA^\hpd_{\PP(L)} \otimes \sO_X(-1), ~\shA_1 \boxtimes D(\PP(L)) \big \rangle,  & k=0; \\ 
	\shA_{k+1} \boxtimes D(\PP(L)), &  1\le k \le m-2.
\end{cases}
	\end{align}

We want to show the similar argument of Lem. \ref{lem:app:ref-bl} and Prop. \ref{prop:app:lef:ref-bl} can be applied to the blowing up $\widetilde{\shD}$ of $\shD$, to obtain a semiorthogonal decomposition of $\widetilde{\shD}$ into ambient component and the refined component, and this together with the semiorthogonal decomposition of $\widetilde{\shD} = \shH_{\widetilde{\shA}}$ as the universal hyperplane section yields the desired statement of Thm .\ref{thm:app:HPDbs}.

\begin{lemma} \label{lem:app:lef:proof} The functor $\tilde{\iota}^* \colon \PP_{\widetilde{\shD}}(\sE) \to \widetilde{\shD}$ is fully faithful on subcategories 
	$$\PP_{\gamma^* \shD_{\ell-2}}(\sE), \ldots, \PP_{\gamma^* (\shD_{m-2}(m-\ell))}(\sE),$$
(where $\shD_k$ are $S_0=\PP(L)$-linear categories defined by (\ref{eqn:sod:D_k}), $\gamma^* \colon \shD \to \widetilde{\shD}$ is the blowing up morphism induced by geometric blowing up $\gamma \colon \shH_{\widetilde{X}} \to \shH_{X,L}$, $\sE = \Omega_{\PP(L)}^1(1)$ as before, and the projective bundle category $\PP_{\shB}(\sE)$ for a $S_0$-linear category $\shB$ is defined in \S \ref{sec:app:proj_bd}) and their images form a semiorthogonal sequence in $\widetilde{\shD}$. Denote the span of images by
	$$\widetilde{\shD}^{\rm amb} : = \big \langle \tilde{\iota}^*(\PP_{\gamma^* \shD_{\ell-2}}(\sE)), \ldots, \tilde{\iota}^*(\PP_{\gamma^* (\shD_{m-2}(m-\ell))}(\sE)) \subset \widetilde{\shD},$$ 
and its right orthogonal by $\widetilde{\shD}^{\rm ref} : = (\widetilde{\shD}^{\rm amb})^\perp \subset \widetilde{\shD}$. 
Then $\widetilde{\shD}^{\rm ref}$ admits a $\PP(L^\vee)$-linear Lefschetz structure 
	$$\widetilde{\shD}^{\rm ref}  = \langle \widetilde{\shD}^{\rm ref}_0, \widetilde{\shD}^{\rm ref}_1 \otimes \sL, \ldots, \widetilde{\shD}^{\rm ref}_{\ell-3} \otimes \sL^{\ell-3}\rangle,$$
where $\widetilde{\shD}^{\rm ref}_k = \widetilde{\shD} \cap \shD_{\ell-2}^\perp$ is equivalent to 
	$$ \widetilde{\shD}^{\rm ref}_k= 
	\begin{cases} \Big \langle  \gamma^*\langle \shA^\hpd_{\PP(L)} \otimes \sO_X(-1),\shA'_1 \boxtimes D(\PP(L)) \rangle,~~ (\shC_L \boxtimes D(\PP(L)))_0 \big \rangle, & k=0; \\ 
	\Big \langle \gamma^*\big(\shA'_{k+1} \boxtimes D(\PP(L))\big), ~~ (\shC_L \boxtimes D(\PP(L)))_0 \Big \rangle,  &  1\ge k \ge \ell-3,
\end{cases}$$
where $\shA_k' : = \shA_k \cap \shA_{\ell-1}^\perp$ is defined by the same formula as Prop. \ref{prop:app:lef:ref-bl}.
\end{lemma}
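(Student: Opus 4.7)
The plan is to mirror the combined proofs of Lemma \ref{lem:app:ref-bl} and Proposition \ref{prop:app:lef:ref-bl}, applied to the $\PP(V)$-linear Lefschetz category $\shD = \shH_{\shA, L}$ of length $m - 1$ (see (\ref{eqn:sod:D_k})) in place of $\shA$. The blowing up $\gamma \colon \widetilde{Y} \to Y$ along $Y_Z = X_{L^\perp} \times \PP(L)$ of codimension $\ell - 1$ in $Y = \shH_{X, L}$ replaces $\beta \colon \widetilde{X} \to X$, and the role of the graph embedding $\widetilde{\Gamma} \colon \widetilde{X} \hookrightarrow \widetilde{X} \times \PP(L^\vee)$ is played by $\tilde{\iota} \colon \widetilde{Y} \hookrightarrow \PP_Y(\sE)$ from diagram (\ref{digram:bl:Y}).

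First I would combine Theorem \ref{thm:bl} applied to $\gamma$ with (\ref{eqn:sod:D_k}) to write out the full semiorthogonal decomposition of $\widetilde{\shD}$, and then establish the full faithfulness of $\tilde{\iota}^*$ on the listed subcategories and the semiorthogonality of their images via a Koszul resolution for $\tilde{\iota}_* \sO_{\widetilde{Y}}$. As in Lemma \ref{lem:app:ref-bl}, the relevant Hom spaces become iterated cones of terms
\begin{align*}
\Hom_{\widetilde{\shD}}\bigl(\gamma^* D_1,\ \gamma^* D_2 \otimes \sL^{-r}\bigr) \otimes (\text{Hom on } Q_L)[-r], \qquad 0 \le r \le \ell - 2,
\end{align*}
with $D_1 \in \shD_{i_1}(i_1 - \ell + 2)$, $D_2 \in \shD_{i_2}(i_2 - \ell + 2)$ and $\ell - 2 \le i_2 \le i_1 \le m - 2$. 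Right mutation of $\gamma^* \shD_{i_1}(i_1)$ through the exceptional components $(\shD_Z)_0, \ldots, (\shD_Z)_{r - 1}$ via Lemma \ref{lem:mut:bl} rewrites it as $\gamma^* \shD_{i_1}(i_1 - r) \otimes \sL^r$, reducing the needed vanishings to the Lefschetz semiorthogonality in $\shD$.

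Next I would prove the Lefschetz statement for $\widetilde{\shD}^{\rm ref}$ in exact parallel with Proposition \ref{prop:app:lef:ref-bl}. Setting $\shD'_k := \shD_{\ell - 2}^\perp \cap \shD_k$ and comparing with (\ref{eqn:sod:D_k}) yields $\shD'_0 = \langle \shA^\hpd_{\PP(L)} \otimes \sO_X(-1),\ \shA'_1 \boxtimes D(\PP(L))\rangle$ and $\shD'_k = \shA'_{k + 1} \boxtimes D(\PP(L))$ for $k \ge 1$, so that $\widetilde{\shD}^{\rm ref}_k = \langle \gamma^* \shD'_k,\ (\shC_L \boxtimes D(\PP(L)))_0\rangle$ matches the stated formulas and becomes the direct analogue of $\rBlA_k$. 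The inclusion $\widetilde{\shD}^{\rm ref}_k \otimes \sL^k \subset (\widetilde{\shD}^{\rm amb})^\perp$ splits into two cases: the $\gamma^* \shD'_k$ part follows from the same Koszul/mutation calculation as above (using $\shD'_k \subset \shD_{\ell - 2}^\perp$), while the $(\shC_L \boxtimes D(\PP(L)))_0$ part follows by adjunction for the closed embedding $\hat{j} \colon Y_Z \hookrightarrow Y$ together with the defining vanishing $\Hom(i_L^* \shA_i(i - \ell + 1),\ \shC_L) = 0$ for $\ell \le i \le m - 1$ of the Kuznetsov component. Generation of $\widetilde{\shD}$ by $\widetilde{\shD}^{\rm amb}$ and the $\widetilde{\shD}^{\rm ref}_k \otimes \sL^k$ then comes from expressing the ``missing'' pieces $\hat{j}_* p_Q^*(i_L^* \shA_i(i - \ell + 1)) \otimes \sL^r$ (for $\ell \le i \le m - 1$, $0 \le r \le \ell - 3$) as cones between two elements already lying in $\widetilde{\shD}^{\rm amb}$, using the triangle $\otimes \sO(-E_{Y_Z}) \to \id \to j_{Q*}\, j_Q^*$.

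The main obstacle is reconciling the two distinct linear structures carried by $\widetilde{\shD}$: the $\PP(V) \times \PP(L)$-linear structure underlying (\ref{eqn:sod:D_k}) and the $\PP(L^\vee)$-linear structure arising from the blowing up along the base-locus. This is felt most sharply in the $k = 0$ component, where the extra summand $\gamma^*(\shA^\hpd_{\PP(L)} \otimes \sO_X(-1))$ has no counterpart in the $\shA$-case of Proposition \ref{prop:app:lef:ref-bl}; verifying that it too is right orthogonal to $\widetilde{\shD}^{\rm amb}$ requires unpacking the $\shH_{\shA, L}$-description of this piece and re-running the Koszul/mutation argument there. Beyond this, the principal headache is bookkeeping: all indices and twists are shifted by one relative to Proposition \ref{prop:app:lef:ref-bl} because $Y_Z \subset Y$ has codimension $\ell - 1$ rather than $\ell$.
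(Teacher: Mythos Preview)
Your proposal is essentially correct and follows the same three-part architecture as the paper (full faithfulness via Koszul, orthogonality of the refined components, generation via the exceptional-divisor triangle), but the paper takes a slightly different route in the execution. Rather than treating $\shD$ as a Lefschetz category in its own right and rerunning the arguments of Lemma~\ref{lem:app:ref-bl} and Proposition~\ref{prop:app:lef:ref-bl} from scratch, the paper uses the identification $\PP_{\gamma^* \shD_{k}}(\sE) = \theta_2^*(\beta^* \shA_{k+1} \boxtimes D(Q_L))$ (observation~(\ref{obs:4})) together with the diagram~(\ref{eqn:app:bl-graph:Y}) to reduce every required vanishing directly to the already-established semiorthogonalities inside $\widetilde{\shA}$ from Lemma~\ref{lem:app:ref-bl} and Steps~1--2 of Proposition~\ref{prop:app:lef:ref-bl}. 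This buys efficiency (no repetition of mutation computations) at the cost of setting up the auxiliary embeddings $\theta_1, \theta_2, \overline{\theta}_2$; your intrinsic approach avoids that setup but must redo the mutation bookkeeping. Two small points: the embedding you want is $\tilde{\iota} \colon \widetilde{Y} \hookrightarrow \PP_{\widetilde{Y}}(\sE)$ (the lift described in observation~(\ref{obs:2}), with normal bundle $\sL \otimes \sM^\vee$), not the map $\iota$ into $\PP_Y(\sE)$ of diagram~(\ref{digram:bl:Y}); and in the generation step you mean $j_{Q*}\, p_Q^*$, not $\hat{j}_*\, p_Q^*$. Also be sure your $(\shC_L \boxtimes D(\PP(L)))_0$ orthogonality argument covers the boundary piece coming from $\shA_{\ell-1}$ separately (via the analogue of~(\ref{eqn:app:van:rbl-bs})), since the adjunction-to-$\shC_L$ argument only handles $\ell \le i \le m-1$.
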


This lemma will be proved later. We show that Thm. \ref{thm:app:HPDbs} can be deduced from this lemma:
\begin{proof}[Proof of Thm. \ref{thm:app:HPDbs}]
	Notice the universal hyperplane (\S \ref{sec:app:hyp}) for the $\PP(L^\vee)$-linear decomposition $\widetilde{\shA} = \langle \rBlA, \widetilde{\shA}^{\rm amb}\rangle$ of Prop. \ref{prop:app:lef:ref-bl} admits a $\PP(L^\vee)$-linear decomposition
	$$\widetilde{\shD} = \shH_{\widetilde{\shA} } = \langle \shH_{\rBlA}, \shH_{\widetilde{\shA}^{\rm amb}} \rangle.$$
Compare $\shH_{\widetilde{\shA}^{\rm amb}}$ with $\widetilde{\shD}^{\rm amb}$ of Lem. \ref{lem:app:lef:proof}, notice that $\shH_{\widetilde{\shA}^{\rm amb}} = \PP_{\widetilde{\shA}^{\rm amb}} (\sE)$, where $\sE = \Omega^1_{\PP(L)}(1)$ as before, one sees directly that
	$\widetilde{\shD}^{\rm amb} = \shH_{\widetilde{\shA}^{\rm amb}}$. 
Hence we have a $\PP(L)$-linear equivalence $\shH_{\rBlA}  = \widetilde{\shD}^{\rm ref} = \widetilde{\shD}^{\rm ref} \otimes \sL$.  Now from the defining property (Lem. \ref{lem:sodH}) of HPD, there is a $\PP(L)$-linear semiorthogonal decomposition:
	$$\shH_{\rBlA} = \langle (\rBlA)^\hpd, ~ \rBlA_1(\sL) \boxtimes D(\PP(L)), \ldots, \rBlA_{\ell-2}(\sL^{\ell-2}) \boxtimes D(\PP(L))\rangle.$$
If we compare the above semiorthogonal decomposition with the one for $\shH_{\rBlA} = \widetilde{\shD}^{\rm ref} \otimes \sL$ from Lem. \ref{lem:app:lef:proof}, we obtain directly that the $\PP(L)$-linear functor
	$\Phi \colon \shH_{\shA,L} \to \shH_{\widetilde{\shA}}$,
	$$\Phi (A)= \gamma^*(A  \otimes \sO_X (-1)) \otimes \sL = \gamma^*A \otimes \sO(-E_{Y_Z})  \quad \text{for} \quad A \in  \shH_{\shA,L} ,$$
induces an equivalence of categories $\shA_{\PP(L)}^\hpd \simeq (\rBlA)^\hpd$. \end{proof}

\begin{remark} The twisting $\otimes \sO_X(-1)$ and $\otimes \sL$ in the expression of $\Phi$ is only a matter of convention. In fact, for a $\PP(V)$-linear Lefschetz category $\shA$, there is a definition of a {\em left} HPD category ${}^{\hpd} \shA$, which related to the usual (right) HPD category $\shA^\hpd$ by the $\PP(V^\vee)$-linear equivalence ${}^{\hpd} \shA = \shA^\hpd \otimes \sO_{\PP(V)}(-1)$ (see \cite{P18}). Then the theorem can reformulated as: the pullback functor $\gamma^* \colon \shH_{\shA,L} \to \shH_{\widetilde{\shA}}$ of the blowing up $\gamma$ induces an equivalence for left HPDs:
	$$\gamma^* \colon ({}^\hpd \shA)_{\PP(L)} \simeq {}^\hpd (\rBlA).$$ 
\end{remark}

\subsubsection*{Observations} It remains to prove Lemma \ref{lem:app:lef:proof}. We first make the following observations.
\begin{enumerate}[leftmargin = *]
	\item  \label{obs:1} 
From $\sE = \Omega_{\PP(L)}^1(1)$, $\PP_{\PP(L)}(\sE) = Q_L \subset \PP(L) \times \PP(L^\vee)$, and that $\widetilde{Y} \subset \widetilde{X} \times \PP(L)$ is the universal hyperplane, then by construction the projective bundle $\PP_{\widetilde{Y}}(\sE)$ fits into following diagram of embeddings:
\begin{equation*}
	\begin{tikzcd} 
	\PP_{\widetilde{Y}}(\sE) \ar[hook]{d}[swap]{\theta_1} \ar[hook]{r}{\theta_2}  & \widetilde{X}  \times Q_L  \ar[hook]{d}{ \Id  \times \delta_{Q_L}} \\
	\shH_{\widetilde{X}} \times \PP(L^\vee) \ar[hook]{r}{\delta_{\shH} \times \Id}         &  \widetilde{X} \times \PP(L)  \times \PP(L^\vee). 
	\end{tikzcd}	
	\end{equation*}
where the inclusion $\theta_1$ (resp. $\theta_2$) is an inclusion of divisor of $\sO_{\PP(L)}(1) \boxtimes \sO_{\PP(L^\vee)}(1)$ (resp. $\sO_{\widetilde{X}}(1) \boxtimes \sO_{\PP(L)}(1)$), and $\delta_{Q_L} \colon Q_L \to \PP(L) \times \PP(L^\vee)$ and $\delta_{\shH} \colon \widetilde{X} \times \PP(L)$ denote the inclusion of universal hyperplanes as usual. Then the funtors $\theta_1^*$ (resp. $\theta_2^*$) induces functors $\theta_1^* \colon \shH_{\widetilde{\shA}} \boxtimes D(\PP(L^\vee)) \to \PP_{\widetilde{\shA}}(\sE)$ (resp. $\theta_2^* \colon \widetilde{\shA} \boxtimes D(Q_L) \to \PP_{\widetilde{\shA}}(\sE)$).

	\item \label{obs:2} Next notice that the graph embedding $\widetilde{Y} = \shH_{\widetilde{X}} \hookrightarrow  \widetilde{Y} \times \PP(L^\vee)$ factors through $\widetilde{\iota} \colon \widetilde{Y} \hookrightarrow \PP_{\widetilde{Y}}(\sE)$, and the inclusion $\PP_{\widetilde{Y}}(\sE)\subset  \widetilde{Y} \times \PP(L^\vee)$. $\widetilde{\iota} $ is a lift of the embedding $\iota \colon \widetilde{Y}   \hookrightarrow \PP_{Y}(\sE)$ along the natural projection $\PP_{\widetilde{Y}}(\sE) \to \PP_{Y}(\sE)$. Therefore we have a commutative diagram:
	\begin{equation} \label{eqn:app:bl-graph:Y}
	\begin{tikzcd} &\PP_{\widetilde{Y} }(\sE) \ar{d}  \ar[hook]{r}{\theta_2}& \widetilde{X} \times Q_L \ar{d}{\beta \times \Id} \\
	\widetilde{Y}  =  \shH_{\widetilde{X}} \ar[hook]{ur}{\widetilde{\iota}} \ar[hook]{r}{\iota}& \PP_{Y}(\sE)  \ar[hook]{r}{\overline{\theta}_2} & X \times Q_L 
	\end{tikzcd}
	\end{equation}
which will play the role of diagram (\ref{eqn:app:bl-graph}) in last subsection, where the inclusion $\overline{\theta}_2 \colon \PP_Y(\sE) \hookrightarrow \PP_{X \times \PP(L)}(\sE) = X \times Q_L $ is induced from the inclusion $Y = \shH_{X,L} \hookrightarrow X \times \PP(L)$ by generalized universal hyperplane construction. Since the graph embedding $\widetilde{Y} = \shH_{\widetilde{X}} \hookrightarrow  \widetilde{Y} \times \PP(L^
\vee)$ is given by a regular section of the vector bundle $\sL \boxtimes \shT_{\PP(L^\vee)}(-1)$, and $\PP_{\widetilde{Y}}(\sE) \subset \widetilde{Y} \times \PP(L^\vee)$ is a divisor of $\sL \otimes \sO_{\PP(L)}(1)$, therefore the normal bundle of $\tilde{\iota}$ is 
	$$N_{\tilde{\iota}} = \sL \otimes \sM^\vee,$$
where $\sM$ is the rank $\ell-1$ vector bundle defined by (\ref{eqn:sM}). Notice that $\tilde{\iota}^* \colon D(\PP_{\widetilde{Y} }(\sE)) \to D(\widetilde{Y})$ induces a functor $\tilde{\iota}^* \colon \PP_{\widetilde{\shD}}(\sE) \to \widetilde{\shD} = \shH_{\widetilde{\shA}}$. Notice also that the line bundles have the following identifications under the above morphisms:
	$$\sL|_{\widetilde{Y}} = \tilde{\iota}^* \, \theta_2^*( \sO_{Q_L}(0,1)) = \iota^* \,\overline{\theta}_2^* (\sO_{Q_L}(0,1)).$$

	\item \label{obs:3} $\widetilde{\shD}$ as the blowing up category of $\shD$ along $\shD_Z$ admits a $\overline{Y}= X \times \PP(L)$-linear semiorthogonal decomposition from blowing up formula Thm. \ref{thm:bl}:	
	\begin{align*} \widetilde{\shD}  & = \langle \gamma^* \shD, ~ (\shD_{Z})_0, (\shD_{Z})_1, \ldots, (\shD_{Z})_{\ell-3} \rangle 
				= \langle (\shD_{Z})_{2-\ell}, \ldots, (\shD_{Z})_{-2}, (\shD_{Z})_{-1}, ~ \gamma^* \shD \rangle, 			
	\end{align*}
(notice that the codimension of the center of blowing up is now $\ell-1$ instead of $\ell$), where $(\shD_{Z})_k$ denotes the image of $\shD_{Z}$ under the fully faithful embedding $j_{Q*} p_Q^* (-) \otimes \sL^k$, $k \in \ZZ$.

	\item \label{obs:4} Assume $\shB \subset \shA$ is an admissible subcategory such that $\Hom(\shB, \shB\otimes \sO_X(-1)) = 0$ (this holds for example for any Lefschetz components $\shA_k \subset \shA$ with $k \ge 1$ and their twists by $\sO_X(t)$, $t \in \ZZ$), then the pullback $\delta_{\shH_{L}}^* \colon \shA \boxtimes D(\PP(L)) \to \shH_{\shA,L}$ of the $\sO(1,1)$-divisor inclusion $\delta_{\shH_{L}}$ is fully faithful on the subcategory $\shB \boxtimes D(\PP(L))$; and similarly the functor $\theta_2^* \colon \widetilde{\shA} \boxtimes D(Q_L) \to \PP_{\widetilde{\shA}}(\sE)$ is fully faithful on $(\beta^* \shB) \boxtimes D(Q_L)$. Furthermore, we have natural equivalence of categories
	$$\PP_{\gamma^*\, \delta_{\shH_L}^*(\shB \boxtimes D(\PP(L)))}(\sE) = \theta_2^* (\beta^* \shB \boxtimes D(Q_L)) \subset  \PP_{\widetilde{\shA}}(\sE).$$
If we apply above to (\ref{eqn:sod:D_k}), we have  in particular the following identifications
	$$\PP_{\gamma^* \shD_{k}(m_k)}(\sE) = \theta_2^* (\beta^* \shA_{k+1}(m_k) \boxtimes D(Q_L)), \qquad 1 \le k \le m-2, ~\forall m_k \in \ZZ,$$
and $\PP_{\gamma^* (\shA_1(m_1) \boxtimes D(\PP(L)) )} = \theta_2^* (\beta^* \shA_{1}(m_1) \boxtimes D(Q_L))$ for all $m_1 \in \ZZ$.

\end{enumerate}

\medskip
The proof Lem. \ref{lem:app:lef:proof}, similar to that of Prop. \ref{prop:app:lef:ref-bl}, can be decomposed into several steps.


\begin{lemma}[cf. Lem. \ref{lem:app:ref-bl}] \label{lem1} The functor $\tilde{\iota}^*$ is fully faithful on the following subcategories
	$$\PP_{\gamma^* (\shD_{k-1}(k+1-\ell))}(\sE) =\theta_2^*(\beta^* \shA_{k}(k+1-\ell) \boxtimes D(Q_L)), \qquad \ell -1 \le k \le m-1,$$
and their images again form a semiorthogonal sequence in $\widetilde{\shD}$.
\end{lemma}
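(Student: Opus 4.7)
The strategy is a direct analog of the proof of Lem.~\ref{lem:app:ref-bl}, with the regular embedding $\tilde{\iota} \colon \widetilde{Y} \hookrightarrow \PP_{\widetilde{Y}}(\sE)$ from Observation~(\ref{obs:2}) playing the role of the graph embedding $\widetilde{\Gamma}$. Since $\tilde{\iota}$ has codimension $\ell-2$ and normal bundle $N_{\tilde{\iota}} = \sL \otimes \sM^\vee$, the Koszul resolution of $\tilde{\iota}_* \sO_{\widetilde{Y}}$ presents, for any $C_1, C_2 \in \PP_{\widetilde{\shD}}(\sE)$, the space $\Hom_{\widetilde{\shD}}(\tilde{\iota}^* C_1, \tilde{\iota}^* C_2)$ as an iterated cone built from the spaces
$$\Hom_{\PP_{\widetilde{\shD}}(\sE)}\bigl(C_1,\; C_2 \otimes \sL^{-p} \otimes \wedge^p \sM\bigr)[-p], \qquad 0 \le p \le \ell-2,$$
the term $p=0$ being $\Hom(C_1, C_2)$ itself. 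Both the asserted full-faithfulness and the asserted semiorthogonality will follow from appropriate vanishings of these cone pieces.

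By Observation~(\ref{obs:4}), every object of $\PP_{\gamma^*(\shD_{k-1}(k+1-\ell))}(\sE)$ for $\ell-1 \le k \le m-1$ can be realized as $\theta_2^*(\beta^* A \boxtimes F)$ with $A \in \shA_k(k+1-\ell)$ and $F \in D(Q_L)$. Writing $C_i = \theta_2^*(\beta^* A_i \boxtimes F_i)$ with $A_i \in \shA_{k_i}(k_i+1-\ell)$ and using $\theta_2^*$-adjunction together with the two-term Koszul for the divisor inclusion $\theta_2$ of class $\sO_{\widetilde{X}}(1) \boxtimes \sO_{\PP(L)}(1)$, followed by K\"unneth on $\widetilde{X} \times Q_L$, each such Hom space further decomposes as an iterated cone of
$$\Hom_{\widetilde{\shA}}\bigl(\beta^* A_1,\; \beta^* A_2 \otimes \sL^{-p} \otimes \sO_{\widetilde{X}}(-s)\bigr) \otimes \Hom_{D(Q_L)}\bigl(F_1,\; F_2 \otimes \wedge^p \sM \otimes \sO_{\PP(L)}(-s)\bigr),$$
for $0 \le p \le \ell-2$ and $s \in \{0,1\}$. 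Since $F_1, F_2$ are arbitrary, full-faithfulness on a fixed level (vanishing of all cone terms with $(p,s)\ne(0,0)$ when $k_1 = k_2$) and semiorthogonality between distinct levels (vanishing of all cone terms when $k_1 > k_2$) both reduce to the vanishing of the first tensor factor.

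The vanishings of the first factor are obtained exactly as in the proof of Prop.~\ref{prop:app:lef:ref-bl}. Using $\sL = \beta^*\sO_X(1) \otimes \sO_{\widetilde{X}}(-E)$, Lem.~\ref{lem:mut:bl} lets us right-mutate $\beta^*(A_1 \otimes \sO_X(p))$ through the blowing-up components $(\shA_{\PP(L^\perp)})_0, \ldots, (\shA_{\PP(L^\perp)})_{p-1}$ of Thm.~\ref{thm:bl} into $\beta^*(A_1) \otimes \sL^p$. What remains is a $\Hom$ space between Lefschetz components of $\shA$ itself, of exactly the shape handled in equations (\ref{eqn:app:van:rbl}) and (\ref{eqn:app:van:rbl-bs}), and vanishes by the Lefschetz semiorthogonality on $\shA$.

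The main obstacle, and the only substantive work, is the index bookkeeping: for each $\ell - 1 \le k_1, k_2 \le m - 1$ and each $(p, s)$ with $0 \le p \le \ell - 2$, $s \in \{0, 1\}$ contributing to the cones, one must verify that after the mutations the resulting Lefschetz indices land in the valid range of (\ref{eqn:app:van:rbl})--(\ref{eqn:app:van:rbl-bs}). Because the hypotheses force $k_i \in [\ell-1, m-1]$ and the possible twists are strictly controlled, this case analysis is entirely parallel to the one already carried out in the proof of Prop.~\ref{prop:app:lef:ref-bl} and introduces no new difficulty.
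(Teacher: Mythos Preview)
Your proposal is correct and follows the same overall strategy as the paper: apply the Koszul resolution for the regular embedding $\tilde{\iota}$ with $N_{\tilde{\iota}} = \sL \otimes \sM^\vee$, then reduce the resulting Hom spaces to semiorthogonality statements in $\widetilde{\shA}$ already established in \S\ref{sec:app:HPDbs}. The difference is one of economy. After the $\tilde{\iota}$-Koszul the paper does \emph{not} unpack $\theta_2^*$: since $\wedge^r\sM$ lives on $Q_L$, the term $\theta_2^*(A_2\boxtimes(F_2\otimes\wedge^r\sM))$ is still an object of the same projective-bundle category $\PP_{\gamma^*\shD_{i_2-1}(i_2+1-\ell)}(\sE)$, so the required vanishing reduces directly to the semiorthogonality of $\bigl(\beta^*\shA_{i_2}(i_2+1-\ell),\,\beta^*\shA_{i_1}(i_1+1-\ell)\otimes\sL^r\bigr)$ in $\widetilde{\shA}$, which is precisely Lem.~\ref{lem:app:ref-bl}. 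You instead insert a second Koszul (for the divisor $\theta_2$) plus K\"unneth, arriving at first-factor Homs in $\widetilde{\shA}$ that you then handle via Lem.~\ref{lem:mut:bl} and (\ref{eqn:app:van:rbl})--(\ref{eqn:app:van:rbl-bs}); this is exactly how Lem.~\ref{lem:app:ref-bl} itself is proved, so you are in effect re-deriving part of that lemma rather than citing it. One minor imprecision: for full faithfulness the cone consists of the $p\ge 1$ terms only, not all $(p,s)\ne(0,0)$; the $(0,1)$ piece is part of $\Hom(C_1,C_2)$ itself. This does not affect correctness since that piece vanishes anyway (as $\Hom(\shA_k,\shA_k(-1))=0$ for $k\ge 1$, cf.\ Observation~(\ref{obs:4})).
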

\begin{proof}
From observation (\ref{obs:2}), $N_{\tilde{\iota}} = \sL \otimes \sM^\vee$, the functor $\tilde{\iota}_*\, \tilde{\iota}^*$ is the iterated cone of functors:
	$$\otimes \sL^{-r} \otimes \wedge^r \sM, \quad 0 \le r \le \ell-2,$$
where $ \sL^{-r}:=(\sL^\vee)^{\otimes r}$ as before, $\sM$ is the rank $\ell-1$ vector bundle defined by (\ref{eqn:sM}), and the case $r=0$ correspond to identity functor. Therefore for any $A_1,A_2 \in \widetilde{\shA}$, $F_1,F_2 \in D(Q_L)$, 
	$$\Hom\big(\tilde{\iota}^*(\theta_2^*(A_1 \boxtimes F_1)), \tilde{\iota}^*((\theta_2^*(A_2 \boxtimes F_2))\big)$$
is an iterated cone of the $\Hom$ spaces:
	\begin{align}
	  &\Hom\big (\theta_2^*((A_1 \otimes \sL^r) \boxtimes F_1), \theta_2^*(A_2 \boxtimes (F_2 \otimes \wedge^r \sM ) \big), &0\le r \le \ell-2,
	\end{align}
where the case $r=0$ corresponds to $\Hom(\theta_2^*(A_1 \boxtimes F_1), \theta_2^*(A_2 \boxtimes F_2))$. Then if $A_{k} \in \beta^*\shA_{i_k}(i_k-\ell)$, $k=1,2$, such that $\ell -1 \le i_2 \le i_1 \le m-1$, then
	$$\theta_2^*(A_2 \boxtimes (F_2 \otimes \wedge^r \sM)) \in \PP_{\beta^*\shA_{i_2}(i_2+1-\ell) \boxtimes D(\PP(L))} (\sE) = \PP_{\gamma^* \shD_{i_2-1}(i_2 +1 -\ell)}(\sE),$$ 
and the desired result follows from the semiorthogonality of subcategories of $\PP_{\widetilde{\shD}}(\sE)$:
	$$\big(\PP_{\gamma^* \shD_{i_2-1}(i_2 +1 -\ell)}(\sE) , ~~ \PP_{\gamma^* (\shD_{i_1-1}(i_1 +1 -\ell) )\otimes \sL^r }(\sE)
	 \big),$$
for $1 \le r \le \ell-1$, which follows from the the semiorthogonality of the subcategories
	$(\beta^* (\shA_{i_2}(i_2+1- \ell)), \beta^*(\shA_{i_1}(i_1+1-\ell)) \otimes \sL^r  )$ 
of $\widetilde{\shA}$ of Lem. \ref{lem:app:ref-bl}.
\end{proof}

\begin{lemma}[cf. Prop. \ref{prop:app:lef:ref-bl}] \label{lem2}
Define $\widetilde{\shD}^{\rm amb}$ to be the category generated by the images of above Lemma \ref{lem1}, and let $\widetilde{\shD}^{\rm ref}$ be defined as in the statement of Lem. \ref{lem:app:lef:proof}. Then
	$$\widetilde{\shD}_0^{\rm ref}, \widetilde{\shD}_1^{\rm ref} \otimes \sL, \cdots, \widetilde{\shD}_{\ell-3}^{\rm ref} \otimes \sL^{\ell-3} $$
forms a semiorthogonal sequence in the subcategory $(\widetilde{\shD}^{\rm amb})^\perp$. 
\end{lemma}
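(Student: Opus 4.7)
The argument parallels the two-step structure of Proposition \ref{prop:app:lef:ref-bl}, now carried out one level up on the universal hyperplane $\widetilde{Y} = \shH_{\widetilde{X}}$. The geometric substitutions are: the blowup $\beta$ is replaced by $\gamma \colon \widetilde{Y} \to Y$; the graph embedding is replaced by $\tilde{\iota} \colon \widetilde{Y} \hookrightarrow \PP_{\widetilde{Y}}(\sE)$ with normal bundle $\sL \otimes \sM^\vee$ (Observation (\ref{obs:2})); the $\overline{Y}$-linear blowup SOD of Observation (\ref{obs:3}) plays the role of (\ref{sod:app:bl-bs}); and the mutation identities (\ref{eqn:app:van:rbl})--(\ref{eqn:app:van:rbl-bs}) are replaced by their direct analogues for $\gamma$, obtained from Lemma \ref{lem:mut:bl}. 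The identifications in Observation (\ref{obs:4}) let us freely switch between $\theta_2^*(\beta^*\shA_\bullet \boxtimes D(Q_L))$ and $\PP_{\gamma^*\shD_\bullet}(\sE)$.

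The first task is to verify $\widetilde{\shD}_k^{\rm ref}\otimes \sL^k \subset (\widetilde{\shD}^{\rm amb})^\perp$ for each $0 \le k \le \ell-3$. For the $\gamma^*$-pieces---namely $\gamma^*(\shA'_{k+1}\boxtimes D(\PP(L))) \otimes \sL^k$ and, at $k=0$, also $\gamma^*(\shA^{\hpd}_{\PP(L)}\otimes \sO_X(-1))$---I would expand $\Hom(\tilde{\iota}^*\theta_2^*(-), \gamma^*(-))$ via the Koszul resolution of $\tilde{\iota}_*\sO_{\widetilde Y}$ (rank $\ell-2$, exactly as in the proof of Lemma \ref{lem1}) to reduce to an iterated cone of $\Hom$-spaces inside $\widetilde{\shA}$. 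After right-mutating the generator of $\widetilde{\shD}^{\rm amb}$ through $(\shD_Z)_0, \ldots, (\shD_Z)_{r-1}$, these become exactly the vanishings (\ref{eqn:app:van:rbl}) combined with the inclusion $\shA'_\bullet \subseteq \shA_{\ell-1}^\perp$ (and, for the HPD piece, with the defining orthogonality of $\shA^{\hpd}_{\PP(L)}$ against $\shA_j(j)\boxtimes D(\PP(L))$, $j\ge 1$). For the Kuznetsov piece $(\shC_L\boxtimes D(\PP(L)))_0 \otimes \sL^k$, I would mimic Step 2 of Prop. \ref{prop:app:lef:ref-bl}: push down along $j_Q$ by adjunction, use Tor-independence of the ambient square in (\ref{digram:bl:Y}) to rewrite the pushforward through $\overline{\theta}_2^*$ over $X \times Q_L$, and then invoke the defining condition $\Hom_{\shA_{L^\perp}}(i_L^*\shA_i(i+1-\ell), \shC_L)=0$ for $\ell \le i \le m-1$.

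Finally, the internal semi-orthogonality of the sequence $\widetilde{\shD}_0^{\rm ref}, \widetilde{\shD}_1^{\rm ref}\otimes \sL, \ldots, \widetilde{\shD}_{\ell-3}^{\rm ref}\otimes\sL^{\ell-3}$ decomposes into three sub-cases: $\gamma^*$-piece versus $\gamma^*$-piece across twists (reducing via Lemma \ref{lem:mut:bl} to the Lefschetz semi-orthogonality of the collection $\{\shA'_{k+1}(k+1)\}$, which is inherited from the given Lefschetz structure of $\shA$); $(\shC_L\boxtimes\ldots)_0$-piece versus itself across twists (which is the $\sL$-twist semi-orthogonality of $\{(\shD_Z)_k\}$ from Observation (\ref{obs:3})); and mixed terms at distinct twists, again reduced to the defining property of $\shC_L$ by the same $j_Q$-adjunction/Tor-independence argument used above. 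The within-$k$ orthogonality between the $\gamma^*$-piece and $(\shC_L\boxtimes\ldots)_0$-piece is immediate from the order of the blowup SOD in Observation (\ref{obs:3}). I expect the main obstacle to be the bookkeeping that interleaves the blowup-mutation calculus for $\gamma$ with the Koszul/adjunction calculus for $\tilde{\iota}$; no new geometric input beyond Lemmas \ref{lem:app:ref-bl}, \ref{lem:mut:bl}, \ref{lem1}, and the adjunction argument in Prop. \ref{prop:app:lef:ref-bl} Step 2 should be needed.
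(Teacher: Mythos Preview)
Your proposal is correct and follows essentially the same route as the paper. Two minor presentational differences: the paper dispatches the internal semiorthogonality of the sequence in one line by citing the blowup semiorthogonal decomposition of $\widetilde{\shD}$ from Observation~(\ref{obs:3}) (your case-by-case breakdown is fine but unnecessary), and in the $\shC_L$-step the paper explicitly separates the case $i=\ell-1$ (handled by the $\widetilde{\shD}$-analog of (\ref{eqn:app:van:rbl-bs})) from $\ell\le i\le m-1$ (handled by the adjunction/Tor-independence argument you describe)---your ``mimic Step~2'' implicitly covers this split, but it is worth flagging since the defining orthogonality of $\shC_L$ alone does not address $i=\ell-1$.
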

\begin{proof}
The proof is similar to Step 1, Step 2 of the proof of Prop. \ref{prop:app:lef:ref-bl}. The fact that they are semiorthogonal follows directly from the semiorthogonal decomposition of $\widetilde{\shD}$ (see observation (\ref{obs:3}) above). The key part of this step is to show they all belong to $(\widetilde{\shD}^{\rm amb})^\perp$. 

First, we show the following components
	$$\langle \gamma^*(\shA_1' \boxtimes D(\PP(L))), \ldots, \gamma^*(\shA_{\ell-2}' \boxtimes D(\PP(L))) \otimes \sL^{\ell-3} \rangle \subset (\widetilde{\shD}^{\rm amb})^\perp.$$
This is equivalent to show for any 
$A_1 \in \beta^*(\shA_{i_1}(i_1 -\ell +1))$, $F_1 \in D(Q_L)$ as previous step, $A_2 \in \beta^*(\shA_{i_2}') = \beta^*(\shA_{\ell-1}^\perp \cap \shA_{i_2})$, $F_2 \in D(\PP(L))$, where $1 \le i_2 < \ell-1 \le i_1 \le m-1$,
	$$\Hom(\tilde{\iota}^* \theta_2^* (A_1 \boxtimes F_1), \gamma^*(A_2 \boxtimes F_2) \otimes \sL^{i_2-1}) = 0.$$
Denote the composition of natural projections by $\tilde{g} \colon \PP_{\widetilde{Y}}(\sE) \to \widetilde{Y} \to Y$, then $\tilde{g}$ also factorizes through natural projections $ \PP_{\widetilde{Y}}(\sE) \to \PP_{Y}(\sE) \to Y$, and $\gamma = \tilde{\iota} \circ \tilde{g} \colon Y \to \widetilde{Y}$. Therefore
	$$\gamma^*(A_2 \boxtimes F_2) \otimes \sL^{i_2-1} = \tilde{\iota}^* (\tilde{g}^*A_2 \otimes F_2 \otimes \sO_{Q_L}(0,i_2-1)) =  \tilde{\iota}^*\,\theta_2^*(A_2 \boxtimes F_2(0,i_2-1) ),$$
where $F_2(0,i_2-1)$ denotes $F_2 \otimes \sO_{Q_L}(0,i_2-1))|_{Q_L} \in D(Q_L)$. Therefore the above $\Hom$ space is an iterated cone of 
	$$\Hom\big( \theta_2^* ((A_1 \otimes \sL^r) \boxtimes F_1), \theta_2^*(A_2 \boxtimes (F_2 \otimes \wedge^r \sM(0,i_2-1))) \big), \qquad 0 \le r \le \ell-2.$$
The latter components of the above $\Hom$ space for a given $r$ belongs to
	$$\theta_2^*(\beta^* \shA_{i_2}' \boxtimes D(Q_L)) = \PP_{\gamma^*(\shA_{i_2}' \boxtimes D(\PP(L))} (\sE),$$
and the former components of the $\Hom$ space belongs to
	$$\theta_2^*((\beta^* \shA_{i_1} \otimes \sL^r) \boxtimes D(Q_L)) = \PP_{\gamma^*((\shA_{i_1} \otimes \sL^r) \boxtimes D(\PP(L))} (\sE).$$
Therefore the desired semiorthogonality follows from $\big(\PP_{\gamma^*(\shA_{i_2}' \boxtimes D(\PP(L))} (\sE), \PP_{\gamma^*((\shA_{i_1} \otimes \sL^r) \boxtimes D(\PP(L))} (\sE)\big)$ is a semiorthogonal sequence of $\PP_{\widetilde{\shD}}(\sE)$, which follows from the 
fact that
	$$\big(\beta^*\shA_{i_2}', \beta^*(\shA_{i_1}(i_1+1-\ell)) \otimes \sL^r \big)$$	
is semiorthogonal for all $1 \le i_2 < \ell-1 \le i_1 \le m-1$ and $0 \le r \le \ell-2$ proved in Step 1 of proof of Prop. \ref{prop:app:lef:ref-bl}. 

Second,
	$$\gamma^*(\shA^\hpd_{\PP(L)}\otimes \sO_X(-1)) \subset (\widetilde{\shD}^{\rm amb})^\perp.$$
This follows from the same argument and that $\big(\PP_{\gamma^*(\shA^\hpd_{\PP(L)}\otimes \sO_X(-1))}(\sE), \PP_{\shD_{i_1} \otimes \sL^{i_1+1-\ell}} (\sE) \big)$ is semiorthogonal for all $\ell-1 \le i_1 \le m-1$.

Third, it remains to show that 
	$$\big\langle (\shC_L \boxtimes D(\PP(L)))_0, \ldots, (\shC_L \boxtimes D(\PP(L)))_{0} \otimes \sL^{\ell-3}  \big\rangle \subset (\widetilde{\shD}^{\rm amb})^\perp,$$
where $(\shC_L \boxtimes D(\PP(L))_0$ denotes the image of $\shC_L \boxtimes D(\PP(L)) \subset \shD_{Z}$ under the fully faithful embedding $j_{Q*} p_Q^* \colon \shD_Z \to \widetilde{\shD}$, following the usual convention for blowing up formula. As the Step 2 of the proof of Prop. \ref{prop:app:lef:ref-bl}, this is equivalent to show
	\begin{equation} \label{eqn:Hom:proof_bs}
	\Hom(\tilde{\iota}^*\, \theta_2^*(\beta^*A \boxtimes F_1), ~ j_{Q*} p_Q^* (C \boxtimes F_2) \otimes \sL^r) =0, \qquad 0 \le r \le \ell-3,
	\end{equation}
for all $A \in \shA_{i}(i+1-\ell)$ where $\ell -1 \le i \le m-1$, $F_1 \in D(Q_L)$, $F_2 \in D(\PP(L))$, $C \in \shC_L$.  

We show this separately for the case $i = \ell-1$ and the case $\ell \le i \le m-1$. For $i = \ell-1$, this follows from the semiorthogonality of the subcategories
	$$\big( \PP_{\gamma^* \shD_{\ell-2}} (\sE), ~ \PP_{(\shD_Z)_0 \otimes \sL^s} (\sE)\big), \qquad 0 \le s \le \ell-3,$$
which follows from the semiorthogonality of subcategories $(\gamma^* \shD_{\ell-2},(\shD_Z) \otimes \sL^s)$ of $\widetilde{\shD}$. The latter fact, analogous to (\ref{eqn:app:van:rbl-bs}) in the case $k=r=\ell-1$, follows directly from applying mutations to the semiorthogonal decomposition of $\widetilde{\shD}$. 

Now we focus on the case $i = \ell, \ldots, m-1$. From the commutative diagram (\ref{eqn:app:bl-graph:Y}), the former factor of the $\Hom$ space in (\ref{eqn:Hom:proof_bs}) is:
	$$\tilde{\iota}^*\, \theta_2^*(\beta^*A \boxtimes F_1) = \iota^* \, \overline{\theta}_2^* (A \boxtimes F_1).$$
Therefore from adjunction, the desired vanishing of (\ref{eqn:Hom:proof_bs}) is equivalent to
	$$\Hom_{\shA \boxtimes D(Q_L)}\big(A \boxtimes F_1, ~ \overline{\theta}_{2*} \iota_*\,( j_{Q*} p_Q^* (C \boxtimes F_2) \otimes \sL^r) \big)=0, \qquad 0 \le r \le \ell-3,$$
From the ambient square of (\ref{digram:bl:Y}) is Tor-independent, we have $ \iota_* j_{Q*} p_Q^* = g^* \, \hat{j}_* $, therefore the latter factor of above $\Hom$ space is
	\begin{align*}
	& \overline{\theta}_{2*} \,\iota_*( j_{Q*} p_Q^* (C \boxtimes F_2) \otimes \sL^r) =  \overline{\theta}_{2*} \,( \iota_* j_{Q*} p_Q^* (C \boxtimes F_2) \otimes \overline{\theta}_2^*(\sO_{Q_L}(0,r)))\\
	&= \overline{\theta}_{2*} g^* \, \hat{j}_*  (C \boxtimes F_2) \otimes \sO_{Q_L}(0,r) = (\Id_X \times \pi_{Q_L})^* \delta_{\shH*} \,\hat{j}_*  (C \boxtimes F_2)) \otimes \sO_{Q_L}(0,r),
		\end{align*}
where $\delta_{\shH} \colon Y \times X \times \PP(L)$ is the inclusion, $\pi_{Q_L}$ denotes the projection $Q_L \to \PP(L)$, $\overline{\theta}_{2*} g^* =  (\Id_X \times \pi_{Q_L})^* \,\delta_{\shH*} $ since the corresponding square for projective bundle of $\sE$ is flat. Since
	$$\delta_{\shH} \circ \hat{j} = i_{L} \times \id_{\PP(L)} \colon X_{L^\perp} \times \PP(L)  \hookrightarrow X \times \PP(L),$$
where $i_{L} \colon X_{L^\perp} \hookrightarrow X$ is the inclusion as before, therefore the above factor is isomorphic to
	$$i_{L*} (C) \boxtimes (\pi_{Q_L}^* F_2 \otimes \sO_{Q_L}(0,r)) \in i_{L*} \shC_L \boxtimes D(Q_L) \subset \shA \boxtimes D(Q_L).$$
Now the desired vanishing is equivalent to
	$$\Hom_{\shA} (A,i_{L*} (C)) \otimes \Hom_{Q_L}(F_1, \pi_{Q_L}^* F_2 \otimes \sO_{Q_L}(0,r))= 0$$
which follows from the definition of $\shC_L$, i.e. $\Hom_{\shA}(A, i_{L*} C) = \Hom_{\shA_{L^\perp}}(i_L^* A, C)= 0$ for $A \in \shA_{i}(i+1-\ell)$, $i=\ell, \ldots, m-1$, $C \in \shC_L$.
\end{proof}

\begin{lemma}[Generation; cf. Prop. \ref{prop:app:lef:ref-bl}] \label{lem3} The following semiorthogonal sequence
	$$\widetilde{\shD}_0^{\rm ref}, \widetilde{\shD}_1^{\rm ref} \otimes \sL, \cdots, \widetilde{\shD}_{\ell-3}^{\rm ref} \otimes \sL^{\ell-3} $$
of subcategories of $(\widetilde{\shD}^{\rm amb})^\perp$ from Lemma \ref{lem2} generate $(\widetilde{\shD}^{\rm amb})^\perp$.
\end{lemma}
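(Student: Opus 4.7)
The plan is to adapt the generation step from the proof of Prop. \ref{prop:app:lef:ref-bl} to the present universal--hyperplane setting. We begin with the $\overline{Y}$-linear blowing-up decomposition of observation (\ref{obs:3}):
$$\widetilde{\shD} = \langle \gamma^*\shD,~ (\shD_Z)_0,~ (\shD_Z)_1, \ldots, (\shD_Z)_{\ell-3}\rangle,$$
and expand each summand: for $\gamma^*\shD$, use the Lefschetz structure of $\shD$ from \eqref{eqn:sod:D_k} together with $\gamma^*\sO_X(1) = \sL \otimes \sO(E_{Y_Z})$; for each $(\shD_Z)_k$, use the semiorthogonal decomposition $\shA_{L^\perp} = \langle \shC_L,~ i_L^*\shA_\ell(1), \ldots, i_L^*\shA_{m-1}(m-\ell)\rangle$. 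The pieces that must still be shown to lie in the span of $\widetilde{\shD}^{\rm amb}$ and the claimed refined Lefschetz sequence fall into two classes: (i) the components $(i_L^*\shA_i(i+1-\ell) \boxtimes D(\PP(L)))_r$ for $\ell \le i \le m-1$ and $0 \le r \le \ell-3$, and (ii) the components of $\gamma^*\shD$ involving $\shA_{\ell-1}$ (which split off from $\shA_k = \langle \shA'_k, \shA_{\ell-1}\rangle$ for $1 \le k \le \ell-2$) or the high-twist pieces $\gamma^*(\shA_{k+1}(k) \boxtimes D(\PP(L)))$ for $k \ge \ell-2$, together with the $\sO(jE_{Y_Z})$ corrections arising from the twist conversion.

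For class (i), I would copy the cone argument from the end of Prop.~\ref{prop:app:lef:ref-bl} verbatim: the distinguished triangle of functors
$$\otimes\, \sO(-E_{Y_Z}) \longrightarrow \id \longrightarrow j_{Q*}\,j_Q^* \xrightarrow{[1]}{}$$
applied to $\iota^*\,\overline{\theta}_2^*(A \boxtimes F \otimes \sO_{Q_L}(0,r))$ with $A \in \shA_i(i+1-\ell)$ and $F \in D(\PP(L))$ exhibits $j_{Q*}\,p_Q^*(i_L^* A \boxtimes F) \otimes \sL^r$ as the cone of a morphism between two elements of the form $\iota^*\,\overline{\theta}_2^*(A' \boxtimes F')$. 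Both terms lie in $\widetilde{\shD}^{\rm amb}$ by observations (\ref{obs:2}) and (\ref{obs:4}): $A \in \shA_i(i+1-\ell)$ and $A \otimes \sO_X(-1) \in \shA_i(i-\ell) \subset \shA_{i-1}(i-\ell)$ both have index in the ambient range $[\ell-1, m-1]$ (since $\ell \le i$ forces $\ell-1 \le i-1 \le m-2$).

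For class (ii), I would use right mutations through the $(\shD_Z)_k$ components via the direct analog of Lem.~\ref{lem:mut:bl} applied to the blowing up $\gamma \colon \widetilde{Y} \to Y$, translating $\gamma^*C \otimes \sO(jE_{Y_Z})$-twists into $\gamma^*C \otimes \sL^{-j}$-twists. The resulting $\gamma^*(\shA_{\ell-1} \boxtimes D(\PP(L))) \otimes \sL^s$ pieces and the high-twist pieces $\gamma^*(\shA_{k+1}(k) \boxtimes D(\PP(L)))$ for $k \ge \ell-2$ are then absorbed into $\widetilde{\shD}^{\rm amb}$ via observation (\ref{obs:4}), since these are exactly the Lefschetz components that define $\widetilde{\shA}^{\rm amb}$ (pulled back through $\tilde{\iota}^*\,\theta_2^*$ with $F \in D(Q_L)$).

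The main obstacle will be the careful bookkeeping of the $\sO(E_{Y_Z})$-twists that arise from $\gamma^*\sO_X(1) = \sL \otimes \sO(E_{Y_Z})$, and ensuring that every missing piece is correctly placed via the mutations. A cleaner alternative I would also pursue is to invoke the identification $\widetilde{\shD} = \shH_{\widetilde{\shA}}$ directly: applying the universal-hyperplane construction (which respects semiorthogonal decompositions by Prop.~\ref{prop:bcsod}) to the decomposition $\widetilde{\shA} = \langle \rBlA, \widetilde{\shA}^{\rm amb}\rangle$ of Prop.~\ref{prop:app:lef:ref-bl} yields $\widetilde{\shD} = \langle \shH_{\rBlA}, \shH_{\widetilde{\shA}^{\rm amb}}\rangle$; then a direct generator comparison (using observations (\ref{obs:1})--(\ref{obs:4}) together with the standard decomposition of $D(Q_L)$) identifies $\shH_{\widetilde{\shA}^{\rm amb}}$ with $\widetilde{\shD}^{\rm amb}$, whence $\shH_{\rBlA} = \widetilde{\shD}^{\rm ref}$ and generation by the refined Lefschetz sequence (now verified in Lem.~\ref{lem2}) follows.
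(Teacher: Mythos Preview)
Your main approach is correct and follows the paper's argument closely: the cone argument for class (i) via the distinguished triangle $\otimes\,\sO(-E_{Y_Z}) \to \id \to j_{Q*}j_Q^* \xrightarrow{[1]}$ applied to $\iota^*\overline{\theta}_2^*(A \boxtimes F(0,r))$ is exactly what the paper does. In fact you are more careful than the paper: the paper simply asserts ``we only need to show'' the class (i) pieces lie in the span, silently folding the class (ii) bookkeeping (the $\sO(E_{Y_Z})$--twist conversions needed to pass between $\gamma^*\sO_X(1)$- and $\sL$-twists, and the absorption of the $\shA_{\ell-1}$-summands into $\widetilde{\shD}^{\rm amb}$) into the parallel with Prop.~\ref{prop:app:lef:ref-bl}. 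Your mutation argument for class (ii) via Lem.~\ref{lem:mut:bl} is the right way to make this step explicit.

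Your proposed alternative, however, is incomplete as a proof of the lemma. Granting Prop.~\ref{prop:app:lef:ref-bl} and the identification $\shH_{\widetilde{\shA}^{\rm amb}} = \widetilde{\shD}^{\rm amb}$ (which the paper does carry out, but in the proof of Thm.~\ref{thm:app:HPDbs} \emph{after} Lem.~\ref{lem:app:lef:proof}), you obtain $\widetilde{\shD}^{\rm ref} = \shH_{\rBlA}$. But this alone does not give generation by the specific pieces $\widetilde{\shD}^{\rm ref}_k \otimes \sL^k$: Lem.~\ref{lem2} only shows these pieces are semiorthogonal and lie inside $\widetilde{\shD}^{\rm ref}$, not that they span it. The natural decomposition of $\shH_{\rBlA}$ coming from the Lefschetz structure of $\rBlA$ together with the projective bundle formula has building blocks $\pi^*\rBlA_k$ twisted by $\sO_{\PP(L)}$, whereas the $\widetilde{\shD}^{\rm ref}_k$ involve $\gamma^*(\shA^{\hpd}_{\PP(L)}\otimes\sO_X(-1))$; matching these two presentations is precisely the content of the comparison that proves Thm.~\ref{thm:app:HPDbs} and so cannot be invoked here without circularity. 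Stick with your first approach.
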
 
\begin{proof}
By comparing the two semiorthogonal decompositions of $\widetilde{\shD}$, one from blowing up formula for $\widetilde{\shD}$ (see observation (\ref{obs:3})) and one is $\widetilde{\shD} = \langle (\widetilde{\shD}^{\rm amb})^\perp, \widetilde{\shD}^{\rm amb} \rangle$, we only need to show the subcategory
	$$j_{Q*} p_Q^* (\hat{j}^*(\delta_{\shH}^*(\shA_{i}(i+1-\ell) \boxtimes D(\PP(L))) \otimes \sL^r, \qquad \ell \le i \le m-1, ~0 \le r \le \ell-3$$
belongs to the category generated by $\widetilde{\shD}^{\rm amb}$ and $\widetilde{\shD}_{i}^{\rm ref} \otimes \sL^{i}$'s, where $i=0,\ldots, \ell-3$. Since the above category is generated by elements of the form
	\begin{align*}
	& j_{Q*} p_Q^* \hat{j}^* \delta_{\shH}^* (A \boxtimes F) \otimes \sL^r= j_{Q*} j_Q^* \, \iota^* \overline{\theta}_2^* \big(A \boxtimes (\pi_{Q_L}^* F \otimes \sO_{Q_L}(0,r)) \big) \\ 
	&=  j_{Q*} j_Q^* \,  \tilde{\iota}^*  \theta_2^*(\beta^* A \boxtimes (F(0,r)))
	\end{align*}
for all $A \in \shA_i(i+1-\ell)$, $F \in D(\PP(L))$, $\ell \le i \le m-1$, $0 \le r \le \ell-3$, we only need to show above elements belongs to the category generated by $\widetilde{\shD}^{\rm amb}$ and $\widetilde{\shD}_{i}^{\rm ref} \otimes \sL^{i}$'s, $i=0,\ldots, \ell-3$, where $F(0,r)$ denotes $pi_{Q_L}^* F \otimes \sO_{Q_L}(0,r) \in D(Q_L)$. Then from the distinguished triangle of functors:
	$$\otimes \sO(-E_Q) \to \Id \to j_{Q*} j_Q^* \xrightarrow{[1]}{},$$
the element $j_{Q*} p_Q^* \hat{j}^* \delta_{\shH}^* (A \boxtimes F) \otimes \sL^r$ is isomorphic to the cone of
	$$\tilde{\iota}^* \theta_2^*(\beta^* A \boxtimes (F(0,r))) \otimes \sO(-E_Q) = \tilde{\iota}^* \theta_2^*(\beta^*(A(-1)) \boxtimes F(0,r+1)),$$
where $A(-1) = A \otimes \sO_{X}(-1) \in  \shA$, and
	$$\tilde{\iota}^* \theta_2^*(A \boxtimes (F(0,r))).$$
Since $A \in  \shA_i(i+1-\ell)$, $A(-1) \subset \shA_{i}(i-\ell) \subset \shA_{i-1}(i-\ell)$, for $\ell \le i \le m-1$, then the terms of the above cone belongs to $\tilde{\iota}^* \PP_{\gamma^* \shD_{i-2}(i-\ell)}$ and respectively $\tilde{\iota}^* \PP_{\gamma^* \shD_{i-1}(i+1-\ell)}$. In particular they all belong to $\widetilde{\shD}^{\rm amb}$. Hence we are done.
\end{proof}

Now Lem. \ref{lem:app:lef:proof} follows directly from Lem. \ref{lem1}, \ref{lem2} and \ref{lem3}.

\begin{remark} Notice for the blowing up $\widetilde{Y} \to Y$, the normal bundle of the embedding $\iota \colon \widetilde{Y} \to \PP_{Y}(\sE)$ is $N_{\iota} = \shT_g(-1)|_{\widetilde{Y}} = \sO_X(1) \otimes \sM^\vee$, where $g \colon \PP_Y(\sE) \to Y$ the projection. Hence in the above proof of Lem. \ref{lem:app:lef:proof}, one can equivalent argue using the Koszul complex for $N_{\iota}$ instead of $N_{\tilde{\iota}}$ (of observation (\ref{obs:2})).
\end{remark}

\subsection{Applications to Calabi--Yau category fibrations} \label{sec:CY}
For integers $d, k \ge 2$, denote $V_{kd}$ a $\kk$-vector space of dimension $k\cdot d$. Consider the Veronese map $v_d \colon X: = \PP(V_{kd}) \hookrightarrow \PP(\Sym^{d} V_{kd})$. Denote $H$ the hyperplane class of $\PP(\Sym^{d} V_{kd})$, then $X$ has a rectangular Lefschetz decomposition:
	$$D(X) = \langle \shA, \shA(H), \ldots, \shA(kH)\rangle, \quad \shA = \langle \sO_X, \sO_X(1), \ldots, \sO_X(d-1)\rangle,$$
where $\sO(H)|_X = v_d^* (\sO_{\PP(\Sym^{d} V_{kd})}(1)) = \sO_{X}(d)$. Denote $\shA^{\hpd}/\PP(\Sym^{d}V_{kd}^*)$ the HPD category of $v_d \colon X \to  \PP(\Sym^{d} V_{kd})$. Then the fiber of $\shA^{\hpd}$ over a general point $[H] \in \PP(\Sym^{d}V_{kd}^*)$ is the essential part of the derived category of a degree $d $ hypersurface $X_H \subset \PP(V_{kd})$:
	$$D(X_H) = \langle \shA^{\hpd}|_{[H]}, ~ \shA, \shA(H), \ldots, \shA((k-1)H)  \rangle,$$
where $\shA^{\hpd}|_{[H]}$ is a Calabi--Yau category of dimension $k(d-2)$ by \cite{KuzCY}. Take a general $k$-dimensional linear subspace $L_k \subset \Sym^{d}V_{kd}^*$, then the base locus is a complete intersection of $k$ degree $d$ hypersurfaces inside $\PP(V_{kd})$:
	$$X_{L_k^\perp} = H_1\cap \cdots \cap H_k =: X_{(d,d,\ldots, d)} \subset \PP(V_{kd}), \quad H_1,\ldots, H_k \in |\sO_X(d)|.$$
i.e. an intersection of type $(d,d,\ldots,d)$, which is a Calabi--Yau manifolds of dimension $kd-k-1$. By Prop. \ref{prop:app:lef:ref-bl}, the refined blowing up has a rectangular Lefschetz decomposition:
	$$D(\Bl^{\rm ref}_{X_{L_k^\perp}} \PP(V_{kd}) )= \big\langle D(X_{L_k^\perp}), D(X_{L_k^\perp})\otimes \sO_{\PP(L_k^*)}(1), \ldots, D(X_{L_k^\perp})\otimes \sO_{\PP(L_k^*)}(k-2) \big\rangle.$$

On the other hand, the linear restriction $\shA^{\hpd}|_{\PP(L_k)}$ is a family of Calabi--Yau cateogires of dimension $k(d-2)$ over the projective space $\PP(L_k) = \PP^{k-1}$. If we denote by:
	$$\delta_{\shH_{L_k}} \colon \shH_{L_k}: = \shH_{X, \PP(L_k)} \hookrightarrow \PP(V_{kd}) \times \PP(L_k)$$
the inclusion of universal family of degree $d$ hypersurface over the linear system $\PP(L_k)$, which is a degree $(d,1)$ hypersurface of $\PP(V_{kd}) \times \PP(L_k)$. Then by first part of main Thm. \ref{thm:app:HPDbs}, $\shA^{\hpd}|_{\PP(L_k)}$ also admits the following descriptions:
	\begin{align*}
	\shA^{\hpd}|_{\PP(L_k)} & =  \{C \in D(\shH_{L_k}) \mid \delta_{\shH_{L_k} *}(C) \in \shA \boxtimes D(\PP(L_k)) \} \\
	& =\left\langle \shA(H) \boxtimes D(\PP L_k) |_{\shH_{L_k}}, \ldots,  \shA((k-1)H) \boxtimes D(\PP L_k )|_{\shH_{L_k}}\right\rangle^\perp \subset D(\shH_{L_k}).
	\end{align*} 
Finally the HPD statement of Thm. \ref{thm:app:HPDbs} implies that:
	$$D(X_{L_k^\perp}) \simeq \shA^{\hpd}|_{\PP(L_k)}.$$

If we take $k=2$, then above implies that the Calabi--Yau $(2d-3)$-fold $X_{(d,d)} \subset \PP^{2d-1}$ of intersection type $(d,d)$ is derived equivalent to a pencil $\shA^{\hpd}|_{\PP^1}$ of Calabi--Yau categories of dimension $2(d-2)$. For example, 
\begin{itemize}
	\item If we take $d=3$, then the Calabi--Yau threefolds $X_{(3,3)} \subset \PP^4$ is derived equivalent to a pencil of K3 categories from cubic fourfolds. This is the example considered by Calabrese--Thomas \cite{CT16}. 
	\item Similarly, the Calabi--Yau $5$-fold $X_{(4,4)} \subset \PP^{7}$ is derived equivalent to a pencil of Calabi--Yau categories of dimension $4$, etc.
\end{itemize}

If $k=3$, then the Calabi--Yau $(3d-4)$-fold $X_{(d,d,d)} \subset \PP^{3d-1}$ is derived equivalent to a fibration $\shA^{\hpd}|_{\PP^2}$ of Calabi--Yau categories of dimension $3(d-2)$ over $\PP^2$. For example, 
\begin{itemize}
	\item The K3 surface $X_{(2,2,2)}\subset \PP^{5}$ is derived equivalence to a CY $0$-category fibration over $\PP^2$, which is nothing but the double cover of $\PP^2$ ramified over a sextic curve.
	\item The Calabi--Yau $5$-fold $X_{(3,3,3)}\subset \PP^{8}$ admits a CY $3$-category fibration over $\PP^2$. 
\end{itemize}
On the other hand, 
\begin{itemize}
	\item If we take $d=2$, then the Calabi--Yau $(k-1)$-fold $X_{(2,2,\ldots,2)} \subset \PP^{3k-1}$ admits a CY 0-category fibration over $\PP^{k-1}$, which indicates that they should always admits a ramified finite cover map to $\PP^{k-1}$.

	\item If we take $d=3$, then the Calabi--Yau $(2k-1)$-fold $X_{(3,3,\ldots, 3)} \subset \PP^{3k-1}$ admits a Calabi--Yau $k$-category fibration over $\PP^{k-1}$. 
\end{itemize}

\subsection{Other examples}
Notice that even in the well know examples, the statement of the theorem in the the critical case $\ell = m$ implies something nontrivial.

\begin{example}[Pfaffian-Grassmannian correspondence] \label{eg:Pf-Gr} Let $X = \Gr(2,7) \subset \PP^{20}$ through the Pl\"ukcer embedding, then it has a natural rectangular Lefschetz structure of length $m=7$, and its HPD is given by the noncommutative resolution of Pfaffian loci $Y = (\Pf(4,7), \shR) \subset \check{\PP}^{20}$, see \cite{Kuz06, BC06}. Let $L \subset (\CC^{20})^\vee$ be a generic linear system of dimension $7$, then $X_{L^\perp}$ and $Y_L$ are non-birational Calabi-Yau threefolds. The theorem implies the refined blowing up $\Bl^{\rm ref}_{X_{L^\perp}} X$ of $X$ along $X_{L^\perp}$ has a Lefschetz decomposition with respect to $\sO_{\PP(L^\vee)}(1)$:
	$$D(\Bl^{\rm ref}_{X_{L^\perp}} X) = \langle D(X_{L^\perp}), D(X_{L^\perp})(1), \ldots, D(X_{L^\perp}) (5)\rangle,$$
and is HPD to $Y_L$ with the trivial decomposition $D(Y_L) = D(Y_L)$. This result combined with (1) of Thm. \ref{thm:HPD}  gives another proof of the derived equivalence
	$$D(X_{L^\perp}) \simeq D(Y_L)$$
of \cite{Kuz06, BC06}. Note that the theorem also implies $\Bl^{\rm ref}_{X_{L^\perp}} X$ is equivalent  to the universal hyperplane section $\shH_{Y_L}$ for $Y_L$, which is not obvious from geometry.
\end{example}

\begin{example}[Beauville-Donagi's Pfaffian cubic and K3 surface] \label{eg:Pf-K3} Let $X = \Gr(2,6) \subset \PP^{14}$ with Pl\"ucker embedding and Lefschetz decomposition given in \cite{Kuz06}, then its HPD is given by  $Y = (\Pf(4,6), \shR) \subset \check{\PP}^{14}$. Let $L = \CC^{6} \subset (\CC^{15})^\vee$ be a generic linear subspace of dimension $\ell = 6$. Then $X_{L^\perp} = S \subset \Gr(2,6)$ is a K3 surface, and $Y_L := Y_{4}(3) \subset \PP^5$ is a cubic fourfold. The theorem implies that the refined blowing up of $\Gr(2,6)$ along the K3 surface $S$:
	$$D(\Bl^{\rm ref}_{S} \Gr(2,6)) = \langle D(S), \sO, D(S)(1), \sO(1), D(S)(2), \sO(2), D(S)(3), D(S)(4) \rangle,$$
is HPD to the Pfaffian cubic fourfold $Y_{4}(3) \subset \PP^{5}$ with decomposition
	$$D(Y_{4}(3)) = \langle \shC_L, \sO, \sO(1), \sO(2) \rangle.$$
From (1) of Thm. \ref{thm:HPD}, this implies the well-known result $\shC_L = D(S)$, i.e. there is a geometric K3 surface $S$ associated to the Pfaffian cubic fourfold $Y_{4}(3) \subset \PP^5$. For a general cubic fourfold it is expected that $\shC_L$ is only a noncommutative K3 surface, and whether it is geometric or not is closely related to the rationality of $Y_{4}(3)$. See \cite{Kuz10K3, AT14, Huy17} and references therein for more details. 
\end{example}


\end{document}